\documentclass[11pt]{amsart}
\usepackage{amssymb,amsmath}
\usepackage{epsfig}

\usepackage{hyperref}
\usepackage{fullpage}
\usepackage{color}

\usepackage[T1]{fontenc}
\usepackage{booktabs}
\usepackage{mathrsfs}
\usepackage{array, longtable}
\newcolumntype{C}{>{$}c<{$}}

\theoremstyle{plain}
\newtheorem{theorem}{Theorem}[section]

\newtheorem{lemma}{Lemma}[section]

\newtheorem{example}{Example}[section]

\theoremstyle{definition}

\newtheorem{remark}{Remark}[section]

\numberwithin{equation}{section}

\setbox0=\hbox{$+$}
\newdimen\plusheight
\plusheight=\ht0
\def\+{\;\lower\plusheight\hbox{$+$}\;}

\setbox0=\hbox{$-$}
\newdimen\minusheight
\minusheight=\ht0
\def\-{\;\lower\minusheight\hbox{$-$}\;}

\setbox0=\hbox{$\cdots$}
\newdimen\cdotsheight
\cdotsheight=\plusheight

\def\cds{\lower\cdotsheight\hbox{$\cdots$}}

\makeatletter
\renewcommand{\pmod}[1]{\allowbreak\mkern6mu({\operator@font mod}\,\,#1)}
\makeatother

\newcommand{\Prin}{\mathrm{Prin}}

\newcommand{\Cl}{\mathrm{Cl}}
\newcommand{\Ord}{\mathrm{Ord}}
\newcommand{\Orb}{\mathrm{Orb}}

\newcommand{\lcm}{\mathrm{lcm}}

\usepackage{tabularx}

\begin{document}
\title{Enumeration of modular forms for $\Gamma_1(N)$}
\author{Timothy Huber, Jeffery Opoku, and Dongxi Ye}

\address{
School of Mathematical and Statistical Sciences, University of Texas Rio Grande
Valley, Edinburg, Texas 78539, USA}
\email{timothy.huber@utrgv.edu} 
\email{jeffery.opoku01@utrgv.edu} 

\address{
School of AI and Liberal Arts, Beijing Normal-Hong Kong Baptist University, Zhuhai 519082, Guangdong,
People's Republic of China}
\email{dongxiye@bnbu.edu.cn}

\subjclass[2020]{11F11, 11P83}
\keywords{Lattice; Modular forms; Polyhedral geometry}
\thanks{Dongxi Ye was supported by the Guangdong Basic and Applied Basic Research Foundation (Grant No. 2024A1515030222) and the BNBU Start-up Research Fund (Grant No. R0700157-26).}

\maketitle
\allowdisplaybreaks

\begin{abstract}
This paper considers holomorphic modular forms for $\Gamma_1(N)$ of integral weight
of the form
$$f^{(N)}_{\mathbf a}(\tau)
 =q^{s} (q^{N};q^{N})_{\infty}^{a_0}\prod_{j=1}^{\lfloor N/2 \rfloor}(q^j,q^{N-j};q^N)_\infty^{a_j}, \quad \mathbf a = (a_1, \ldots, a_{\lfloor N/2 \rfloor}),$$
 for fixed $a_0=2k \in 2 \Bbb Z_{\ge 0}$. We show that the number of relevant exponent vectors $\mathbf a$ is finite and characterize them in terms of the $\mathbb{Q}$-rational cuspidal divisor class group of $X_{1}(N)$. Effective procedures are given for counting the admissible exponents by enumerating the corresponding polytopes. This leads to formulas for the number of exponent vectors in terms of quasipolynomials in $k$.
\end{abstract}

\section{Introduction}

This work is concerned with characterizing and counting products of the form
\begin{align} \label{eq:fa-definition1}
f^{(N)}_{\mathbf a}(\tau)
=q^{\frac{N}{24} S_1(a_0,\mathbf a)\;-\;\frac{1}{2N}\,S_2(\mathbf a)} (q^{N};q^{N})_{\infty}^{a_0}\prod_{j=1}^{\lfloor N/2 \rfloor}(q^j,q^{N-j};q^N)_\infty^{a_j}, \quad q=e^{2\pi i\tau},\ {\rm Im}(\tau)>0,
\end{align}
that are holomorphic modular forms for $\Gamma_1(N)$.
Here $5 \le N \in \Bbb Z$, $a_{i} \in \Bbb Z$, and for each fixed $0 \leq a_0\in 2\Bbb Z$ and exponent vector $\mathbf a=(a_1,\dots,a_m)\in \mathbb Z^{\lfloor N/2 \rfloor}$, the exponent of $q$ is given in terms of
\begin{align} \label{S1S2}
S_1(a_0, \mathbf a)=a_0 + 2\sum_{j=1}^{m} a_j ,\qquad 
S_2(\mathbf a)=\sum_{j=1}^{m} j(N-j)\,a_j, \qquad m = \left \lfloor \frac{N}{2} \right \rfloor.
\end{align}
For $N \ge 5$ and $k\geq0$, define the set of exponents $\mathbf a$ so that $f_{\mathbf a}^{(N)}$ is modular on $\Gamma_1(N)$ of weight $k$,
\begin{equation}
\label{LNA}
L_N(k)
=
\left\{
\mathbf a =(a_1,\dots,a_m)\in \mathbb{Z}^m :
f_{\mathbf a}^{(N)}(\tau)\in M_{k}(\Gamma_1(N)),\, a_{0}=2k
\right\}, 
\end{equation}
where $M_{k}(\Gamma_1(N))$ denotes the space of holomorphic modular forms for $\Gamma_{1}(N)$ of weight~$k$. Note that $L_{N}(0)=\{(0,\ldots,0)\}$. Since an element of \(M_{k}(\Gamma_1(N))\) may be viewed as a section of the
line bundle of weight \(k\) on the modular curve \(X_1(N)\), and 
\(f_{\mathbf a}^{(N)}=f_{\mathbf a}^{(N)}(\tau)\) is holomorphic and nonvanishing on the upper half-plane
\(\mathbb H\), the behavior
of \(f_{\mathbf a}^{(N)}\) at cusps determines membership in \(L_N(k)\), and thus, \(L_N(k)\) can be characterized
in terms of modular units on \(X_1(N)\) with prescribed cuspidal divisors. The special form of \(f_{\mathbf a}^{(N)}\) implies that the relevant cuspidal
divisors are \(\mathbb Q\)-rational. Consequently, \(L_N(k)\) can be described
using the \(\mathbb Q\)-rational cuspidal divisor class group of \(X_1(N)\).
This description leads to the characterization in
Theorem~\ref{mt}, where \(L_N(k)\) is realized as a finite set of lattice
points in a bounded polyhedron satisfying explicit congruence conditions. By a result of Streng~\cite{Str}, the \(\mathbb Q\)-rational cuspidal divisor
class group of \(X_1(N)\) can be computed explicitly. We use Streng's construction to find a $\Bbb Z$-basis for the group of modular units in 
Lemma~\ref{prop:explicit-BN}. The characterization of $L_{N}(k)$ in terms of the divisor class group in Theorem \ref{mt} is then used to formulate $|L_{N}(k)|$. 

In previous work \cite{huber2024ramanujan}, we proved that 
\begin{equation}
\label{levels57}
|L_{5}(k)|=\sum_{j=0}^{\left\lfloor k/2\right\rfloor}
                 \binom{k-2j+1}{1}\quad\mbox{and}\quad |L_{7}(k)|=\sum_{j=0}^{\left\lfloor 2k/3\right\rfloor}
                 \binom{2k-3j+2}{2}.
\end{equation}
In these cases, the corresponding modular
curves \(X_1(5)\) and $X_{1}(7)$ have genus zero, so the corresponding
\(\mathbb Q\)-rational cuspidal divisor class groups are trivial. More
generally, concise formulas for \(|L_N(k)|\) are obtained for $5 \le N \le 10$ and $N=12$ when
\(X_1(N)\) has genus zero; see Theorem~\ref{genuszero}. As an application of the general polytope characterization for exponent vectors $\mathbf a$, we derive counting formulas for larger \(N\) where \(X_1(N)\) has nontrivial genus. For $N \ge 5$, the counts decompose into
$$
|L_{N}(k)|=T_{N}(k)+\mathcal{E}_{N}(k),
$$
where $T_{N}(k) \in \Bbb Q[k]$, called the \emph{polynomial part} of $|L_{N}(k)|$, has degree~$\lfloor N/2\rfloor$ in $k$, and $\mathcal{E}_{N}(k)$, called the \emph{quasipolynomial part} of $|L_{N}(k)|$, is a piecewise polynomial function of degree smaller than $\lfloor N/2\rfloor$ in~$k$. Both components depend on the structure and order of the \(\mathbb Q\)-rational cuspidal divisor class group of
\(X_1(N)\). For example, in level \(N=11\), whose corresponding $\mathbb{Q}$-rational cuspidal divisor class group is $\mathbb{Z}/5\mathbb{Z}$, Theorem~\ref{level11} gives
\begin{align}
\label{L11}
|L_{11}(k)| = \frac{25 k^5}{24} + \frac{125 k^4}{24} + \frac{75 k^3}{8} + \frac{175 k^2}{24} + \frac{137 k}{60} + \frac{1}{5} + \mathcal{E}_{11}(k),
\end{align}
where 
\[
\mathcal{E}_{11}(k)
=\begin{cases}
4/5, & k\equiv0\pmod 5,\\
-2/5, & k\equiv1\pmod 5,\\
2/5, & k\equiv2\pmod 5,\\
-4/5, & k\equiv3\pmod 5,\\
0, & k\equiv4\pmod 5.
\end{cases}
\]

The remainder of the present work is organized as follows. Section \ref{def} introduces basic notions, including a formula for the presentation matrix of the rational cuspidal divisor class group of \(X_1(N)\). In Section~3, we use modularity criteria for Klein forms to determine conditions on the exponents appearing in \eqref{eq:fa-definition1} such that $f^{(N)}_{\mathbf a}(\tau)$ is a modular form of level $N$. For each fixed $k\geq0$, we show that exponents $\mathbf a\in L_{N}(k)$ are contained within an $\lfloor N/2 \rfloor$-dimensional polytope and satisfy congruences determined by invariant factors of the rational cuspidal divisor class group. In Section~$4$, this construction is used to develop effective strategies for deducing formulas for $|L_{N}(k)|$. In Section~5, these strategies are illustrated by a number of explicit examples, including~\eqref{L11}.

\section{Foundations} \label{def}
Define
\[
\Gamma_1(N)
=
\left\{
\begin{pmatrix}
a & b\\
c & d
\end{pmatrix}
\in{\rm SL}_{2}(\mathbb{Z})
:
a\equiv d\equiv 1 \pmod N,\;
c\equiv 0 \pmod N
\right\}.
\]
A holomorphic (meromorphic) modular form of weight $k$ on $\Gamma_1(N)$ is a holomorphic (meromorphic) function
$f$ on the upper half plane $\mathbb{H}$ and satisfying the transformation
\[
f\!\left(\frac{a\tau+b}{c\tau+d}\right)=(c\tau+d)^k f(\tau)
\qquad
\text{for all }
\begin{pmatrix}
a & b\\
c & d
\end{pmatrix}\in \Gamma_1(N),
\]
and which is holomorphic (meromorphic) at all cusps. A modular function for $\Gamma_1(N)$ is a meromorphic function on $\mathbb{H}$ and at the cusps that is invariant under action by $\Gamma_1(N)$. To define the behavior of $f$ at cusps, let \(\mathfrak c\) be a cusp of \(\Gamma_1(N)\), and choose \(\gamma\in{\rm SL}_{2}(\Bbb Z)\) such that
\(\gamma(\infty)=\mathfrak c\). Let \(h_{\mathfrak c}\) denote the width of \(\mathfrak c\), i.e., the smallest positive integer such that $\gamma \left ( \begin{smallmatrix} 1 & h_{\mathfrak c} \\ 0 & 1 \end{smallmatrix} \right ) \gamma^{-1} \in \Gamma_1(N)$. In particular, for cusps $\mathfrak c = [a/c]$ of $\Gamma_1(N)$, the cusp width is $h_{\mathfrak c} = N/\gcd(c,N)$ \cite[Proposition 6.3.20]{cohen}. A local
parameter at the cusp \(\mathfrak c\) is $
q_{\mathfrak c}=\exp\!\left(2\pi i \tau/h_{\mathfrak c}\right)$, and a nonzero meromorphic modular form \(f\) of weight \(k\) on \(\Gamma_1(N)\) has a Laurent expansion in the local parameter
\[
(f|_k\gamma)(\tau)=\sum_{n=n_0}^\infty a_n q_{\mathfrak c}^{\,n},
\qquad a_{n_0}\neq 0.
\]
In this case, the order of \(f\) in terms of the local parameter at \(\mathfrak c\) is defined by $\Ord_{\mathfrak c}(f)=n_0$. 
As before, let $M_k(\Gamma_1(N))$ denote the space of holomorphic modular forms of weight $k$ on $\Gamma_1(N)$. Let $Y_1(N) =\Gamma_1(N)\backslash \mathbb{H}$, and denote the corresponding compact modular curve $
X_1(N)=\Gamma_1(N)\backslash \bigl(\mathbb{H}\cup \mathbb{P}^1(\mathbb{Q})\bigr).$ The cusps are $\Gamma_1(N)$-equivalence classes of points of $\mathbb{P}^1(\mathbb{Q})$, written $[a/c]$, for $\gcd(a,c)=1$. A complete set of cusp representatives of $X_{1}(N)$ is given by 
\begin{align} \label{cusp}
\left\{\left[a/(gu)\right]:\,
g\mid N,\,
u\in (\mathbb{Z}/(N/g)\mathbb{Z})^\times,\,
a\in (\mathbb{Z}/g\mathbb{Z})^\times/\{\pm1\}, \, \gcd(a,gu)=1\right\}.
\end{align}
If $\mathfrak c=[a/c]$ is such a cusp, define
\begin{align}
\label{gcusp}
    g = g(\mathfrak c)=\gcd(c,N).
\end{align}
Let \(X=X_1(N)\) and write $\mathfrak C_{N}$ for the set of cusps of $X_{1}(N)$ that are  defined \cite{stevens} over $\mathbb{Q}(\omega_{N})$, where $\omega_{N}=e^{2\pi i/N}$. 
The Galois group
$
{\rm Gal}(\mathbb{Q}(\omega_N)/\mathbb{Q})\cong (\mathbb{Z}/N\mathbb{Z})^\times
$ acts on the rational cusps via 
\begin{align}
\sigma_x[a/(gu)]=[a/x(gu)], \qquad x\in (\mathbb Z/N\mathbb Z)^\times.  \label{galois}
\end{align}
See, e.g., \cite[Section 1.3]{stevens}.
Thus the Galois orbit of a cusp \(\mathfrak c = [a/(gu)]\) is obtained by allowing \(u\) to run
through all units modulo \(N/g\), while \(g\) and the class of \(a\) modulo \(g\)
remain fixed. Hence the Galois orbit of a cusp from \eqref{cusp} is determined by the ordered pair \((g,a)\), for $g \mid N$ and $a\in (\mathbb Z/g\mathbb Z)^\times/\{\pm1\}$.

\begin{lemma}\label{lem:types}
The number of Galois orbits of cusps for $X_1(N)$ via \eqref{galois} is $\lfloor N/2 \rfloor+1$.
\end{lemma}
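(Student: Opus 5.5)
From the discussion preceding the statement, the Galois orbits of cusps under \eqref{galois} are in bijection with the pairs $(g,a)$ with $g\mid N$ and $a\in(\mathbb Z/g\mathbb Z)^\times/\{\pm1\}$. So the plan is to count these pairs and show the count is $\lfloor N/2\rfloor+1$. The number of orbits is
\[
\sum_{g\mid N} \bigl|(\mathbb Z/g\mathbb Z)^\times/\{\pm1\}\bigr|,
\]
where the quotient has size $\varphi(g)/2$ when $g\ge 3$, and size $1$ when $g=1,2$ (there $-1\equiv 1$).

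We then split the sum into the small divisors and the rest:
\[
\sum_{g\mid N,\,g\ge3}\frac{\varphi(g)}{2}+\#\{g\in\{1,2\}: g\mid N\}
=\frac12\Bigl(N-\#\{g\in\{1,2\}:g\mid N\}\Bigr)+\#\{g\in\{1,2\}:g\mid N\}.
\]
This step uses Gauss's identity $\sum_{g\mid N}\varphi(g)=N$ together with $\varphi(1)=\varphi(2)=1$.

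Writing $\epsilon$ for the number of $g\in\{1,2\}$ dividing $N$, the total is $(N+\epsilon)/2$. If $N$ is odd, then $\epsilon=1$ and we get $(N+1)/2=\lfloor N/2\rfloor+1$. If $N$ is even, then $\epsilon=2$ and we get $N/2+1=\lfloor N/2\rfloor+1$.

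The only point that needs care is the bijection between orbits and pairs $(g,a)$. One must check that, inside the representative set \eqref{cusp}, the action $u\mapsto xu$ of $(\mathbb Z/N\mathbb Z)^\times$ is transitive on the $u\in(\mathbb Z/(N/g)\mathbb Z)^\times$ for fixed $(g,a)$. This follows from the surjectivity of $(\mathbb Z/N\mathbb Z)^\times\to(\mathbb Z/(N/g)\mathbb Z)^\times$. One must also check that distinct pairs $(g,a)$ give distinct orbits, because $g=\gcd(c,N)$ and $\pm a \bmod g$ are invariants of the cusp class that the action does not change. Both facts are already stated in the text before the lemma, which cites \cite{stevens}, so I would invoke them rather than reprove them.

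An alternative check avoids the bijection bookkeeping. Map the orbit $(g,a)$ to the cusp $[a/g]$, whose orbit corresponds to the class $\pm a/g\in(\mathbb Q/\mathbb Z)/\{\pm1\}$ with denominator dividing $N$. The pairs $\{\pm j/N\}$ for $0\le j\le\lfloor N/2\rfloor$ then give exactly $\lfloor N/2\rfloor+1$ orbits. This matches the indexing of the factors in \eqref{eq:fa-definition1}.
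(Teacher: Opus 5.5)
Your main argument is correct and is essentially the paper's proof: you count pairs $(g,a)$, using that $|(\mathbb Z/g\mathbb Z)^\times/\{\pm1\}|$ is $1$ for $g\le 2$ and $\varphi(g)/2$ otherwise, and then apply $\sum_{d\mid N}\varphi(d)=N$; the paper writes the total as $1+\mathbf{1}_{2\mid N}+\sum_{d\mid N,\,d>2}\varphi(d)/2$, which is your $(N+\epsilon)/2$. Your alternative via $(\tfrac1N\mathbb Z/\mathbb Z)/\{\pm1\}$ is also valid, but it still relies on the same orbit--pair parametrization and only replaces the Gauss-identity step, since decomposing $\tfrac1N\mathbb Z/\mathbb Z$ by exact denominator is the usual proof of that identity.
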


\begin{proof}
For each divisor $d\mid N$ the number of $a\in(\mathbb Z/d\mathbb Z)^\times/\{\pm 1\}$
is $1$ if $d=1$ or $d=2$, and equals $\varphi(d)/2$ if $d>2$.
Thus the number of pairs $(d,a)$ is
\begin{align}
    \sum_{d \mid N} |(\Bbb Z / d \Bbb Z)^{\times}/ \{\pm 1\}|=1+\mathbf{1}_{2 \mid N}+ \sum_{d\mid N \atop d > 2} \frac{\varphi(d)}{2} = \lfloor N/2 \rfloor +1,
\end{align}
where the penultimate equality follows from $\sum_{d \mid N} \varphi(d) = N$. 
\end{proof}

Let $m= \lfloor N/2 \rfloor$. Define the group of cuspidal divisors
\[
\operatorname{Div}^c(X)=\bigoplus_{\mathfrak c\in  \mathfrak C_{N}}\mathbb Z[\mathfrak c],
\]
and let
\[
\operatorname{Div}^c_{\mathbb Q}(X)=\left\{D\in\operatorname{Div}^c(X):\,\sigma(D)=D\,\,\mbox{for any $\sigma\in{\rm Gal}(\mathbb{Q}(\omega_{N})/\mathbb{Q})$}\right\}
\]
be its $\mathbb Q$-rational part, i.e., the subgroup generated by the sums of cusps that are invariant under the Galois action \eqref{galois}. For $f$ a nonzero meromorphic function on  \(X\), define its divisor
\[
\operatorname{div}(f)=\sum_{P\in X} \Ord_P(f)\,[P],
\]
and the degree $\deg:\operatorname{div}(X)\to \mathbb Z$ of a divisor as the sum of the orders. By Lemma~\ref{lem:types}, there are $m+1$ Galois orbit classes. Choose representatives from each Galois orbit class
$\
\mathfrak c_0,\mathfrak c_1,\dots,\mathfrak c_m
$
with $\mathfrak c_{m} = [1/N]= [i\infty]$. 
For $0 \le r \le m$, let \(\Orb( \mathfrak c_r)\) denote the corresponding Galois orbit, and note that $|\Orb(\mathfrak c_{m})| =1$. Denote the orbit sum
\[
C_r=\sum_{\mathfrak c\in \Orb(\mathfrak c_r)} [\mathfrak  c].
\]
Then each \(C_r\) is a \(\mathbb{Q}\)-rational cuspidal divisor on \(X\), and \(C_0,\dots,C_m\) form a \(\mathbb{Z}\)-basis of \(\operatorname{Div}^c_{\mathbb{Q}}(X)\). Write
$
\mu_r=\deg(C_r)=|\Orb(\mathfrak{c}_{r})|.
$
Then the degree-zero subgroup \(\operatorname{Div}_{\mathbb{Q}}^{0,c}(X)\) of the rational cuspidal divisors is free of rank \(m\), and a basis is
\begin{equation}
\label{FR}
\mathcal F_r=C_r-\mu_r C_m,
\qquad
0\le r\le m-1.
\end{equation}
A modular unit on $X$ is a modular function with divisor supported only at the cusps. Let
\[
\mathcal{U}_{\mathbb{Q}}
=
\{u\in \mathbb{Q}(X)^\times : \operatorname{div}(u)\text{ is supported on cusps}\}/\mathbb{Q}^\times.
\]
A principal divisor is a divisor that is the image of a nonzero meromorphic function. Since principal divisors on $X$ have degree zero, the restriction of the divisor map to $\mathcal{U}_{\mathbb{Q}}$ is an injective homomorphism. Its image is the group of principal divisors denoted by
\[
\operatorname{Prin}^c_{\mathbb Q}(X)
=
\operatorname{im}\bigl(\operatorname{div}:\mathcal U_{\mathbb Q}\to \operatorname{Div}^{0,c}_{\mathbb Q}(X)\bigr).
\]
The rational cuspidal divisor class group is defined as
\[
\Cl^c_{\mathbb{Q}}(X)
=
\operatorname{Div}_{\mathbb{Q}}^{0,c}(X)\big/\Prin^c_{\mathbb{Q}}(X).
\]
Let $u_1,\dots,u_m$ be a $\Bbb Z$-basis of $\mathcal U_{\mathbb Q}$. With respect to the basis $\mathcal F_0,\dots, \mathcal F_{m-1}$ of $\operatorname{Div}^{0,c}_{\mathbb Q}(X)$, each divisor $\operatorname{div}(u_i)$ can be written uniquely in terms of the class matrix $V_{N} = (V_{r,i})_{1 \le r, i \le m}$ via
\begin{align} \label{clV}
\operatorname{div}(u_i)=\sum_{r=0}^{m-1} V_{r+1,i}\mathcal F_r,
\qquad V_{r,i}\in \mathbb Z.
\end{align}
The principal divisors and the class group may be represented in terms of the class matrix
\[
\operatorname{Prin}^c_{\mathbb Q}(X)=V_N\mathbb Z^m
\subset \mathbb Z^m\cong \operatorname{Div}^{0,c}_{\mathbb Q}(X), \quad \text{and} \quad \mathrm{Cl}^c_{\mathbb Q}(X)\cong \mathbb Z^m/V_N\mathbb Z^m.
\]

To describe the class matrix explicitly, we first construct a basis for the lattice determining the product of powers of generalized eta functions, 
\begin{align} \label{siegel}
q^{\frac{N}{2}B_2(i/N)}(q^i,q^{N-i};q^N)_\infty, \qquad B_{2}(x) = x^{2} -x+ \frac{1}{6},
\end{align}
that generate the group of modular units.

\begin{lemma}[\cite{Str}] \label{ylem}
Let \(N\ge 5\) and
\begin{equation}\label{PIN}
    \Pi_{N}=
\left\{
(e_1,\dots,e_m)\in \mathbb Z^m:
\sum_{j=1}^m e_j\equiv 0 \pmod{12},
\ \sum_{j=1}^m j^2 e_j\equiv 0 \pmod{\gcd(2,N)N}
\right\},
\end{equation}
Then the set of the products
\[
\prod_{j=1}^m q^{\frac{Ne_{j}}{2}B_2(j/N)}(q^j,q^{N-j};q^N)_\infty^{e_{j}},
\qquad (e_{1},\ldots,e_{m})\in \Pi_{N},
\]
taken modulo multiplication by elements of \(\mathbb Q^\times\), is exactly the set of modular units on \(X_1(N)\) with \(\mathbb Q\)-rational cuspidal divisors and in particular, is a free abelian group of rank~$m$. 
\end{lemma}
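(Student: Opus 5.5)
The plan is to reduce the statement to two known inputs: the transformation theory of Siegel/Klein functions (Kubert--Lang), and Streng's result that the $\mathbb Q$-rational modular units on $X_1(N)$ are, up to constants, exactly products of the functions \eqref{siegel}. Write $g_j(\tau)=q^{\frac N2 B_2(j/N)}(q^j,q^{N-j};q^N)_\infty$ for $1\le j\le m$. Up to a root of unity and a factor of $\eta(N\tau)^{-2}$-type normalization, $g_j$ is the Siegel function $g_{(j/N,0)}(N\tau)$. By Kubert--Lang, $\prod_j g_j^{e_j}$ is invariant under $\Gamma_1(N)$ exactly when the quadratic relations
\[
\sum_j e_j\equiv 0 \pmod{12},\qquad \sum_j j^2 e_j\equiv 0 \pmod{\gcd(2,N)N}
\]
hold. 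For the Siegel functions on $\Gamma_1(N)$ these relations are $\sum_j e_j j^2\equiv 0$ modulo $\gcd(2,N)N$ together with the twelfth-root-of-unity condition coming from the $\eta$-multiplier, which gives the mod $12$ condition. So the first step is to verify that every product with $(e_j)\in\Pi_N$ is a modular function on $\Gamma_1(N)$. It is holomorphic and nonvanishing on $\mathbb H$, so it is a modular unit. Its $q$-expansion has rational coefficients, so it lies in $\mathbb Q(X)$. A rational function is fixed by Galois, so its divisor is $\mathbb Q$-rational.

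The second step is the converse: every $u\in\mathcal U_{\mathbb Q}$ is, modulo $\mathbb Q^\times$, such a product. Here I would invoke Streng's theorem (building on Kubert--Lang and Yu) that the full group of modular units on $X_1(N)$ modulo constants is generated by the Siegel functions $g_{(a/N,0)}(N\tau)$ satisfying these quadratic relations. The Galois action \eqref{galois} permutes these functions through the $\sigma_x$-action on the parameters. A unit with $\mathbb Q$-rational divisor is fixed by Galois up to constants. The distribution relations show that the products $\prod_j g_j^{e_j}$ with integer exponents, indexed by $j\le m$ because $g_{N-j}=\pm g_j$, form exactly the Galois-stable part. Rationality of the leading coefficient then lets us absorb the remaining constant into $\mathbb Q^\times$.

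The third step is freeness of rank $m$. The map $(e_j)\mapsto \prod_j g_j^{e_j}$ is injective modulo constants, because the orders of $g_j$ at the $m+1$ orbit representatives are given by $\tfrac12 B_2$ values. The resulting $(m+1)\times m$ matrix has rank $m$ by the nonvanishing of the relevant Bernoulli/Dirichlet $L$-value determinant, which is a Kubert--Lang result. Hence the group is isomorphic to the finite-index sublattice $\Pi_N\subset\mathbb Z^m$, which is free of rank $m$.

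The main obstacle is the converse in step two: showing that no other Galois-invariant units exist, that is, that the $\mathbb Q$-rational part of the unit group is exactly $\Pi_N$ rather than a larger lattice. For this I would rely directly on Streng's explicit description, \cite{Str}, rather than re-deriving it.
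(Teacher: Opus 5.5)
Your proposal follows essentially the paper's approach: the paper gives no independent proof of this lemma and imports it from Streng \cite{Str}, and you likewise rest the only substantive direction (that every unit in $\mathcal U_{\mathbb Q}$ is such a product) on Streng. Your other checks are routine: the congruences $\sum e_j\equiv 0\pmod{12}$ and $\sum j^2e_j\equiv 0\pmod{\gcd(2,N)N}$ for sufficiency, rationality of the $q$-expansions, and injectivity, which in fact follows immediately from unique factorization of $\prod_{n\ge1}(1-q^n)^{c_n}$ without any $L$-value determinant.

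Two points of wording in your second step need correcting. First, Streng's theorem describes the units defined over $\mathbb Q$ modulo $\mathbb Q^\times$, not the full unit group of $X_1(N)$ over $\mathbb C$, whose rank is the number of cusps minus one and is generally larger than $m$. Second, each $g_j=\pm g_{(j/N,0)}(N\tau)$ exactly, with no $\eta$-normalization. It has a rational $q$-expansion, so it is fixed by Galois rather than permuted by the cusp action $\sigma_x$.
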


We apply Lemma~\ref{ylem} to find a \(\mathbb Z\)-basis for the lattice \(\Pi_{N}\). This will be used to construct the matrix generating the divisor class group over $\Bbb Z$.
\begin{lemma}\label{prop:explicit-BN}
Let \(m=\lfloor N/2\rfloor\) and \(M=N\gcd(2,N)\), and denote
\begin{align} \label{ab}
\alpha(\mathbf a)=\sum_{j=1}^m a_j,
\qquad
\beta(\mathbf a)=\sum_{j=1}^m j^2a_j.
\end{align}

For \(N=5\), define
\[
B_5=
\begin{pmatrix}
16 & -5\\
-4 & 5
\end{pmatrix}.
\]

Now assume \(N\ge 6\). Let \(r_N\in\{0,1,2\}\) be the unique integer such that
\[
r_N\equiv 2M \pmod 3,
\]
and define
\[
a_N=\frac{-M+5r_N}{3},
\qquad
b_N=\frac{M-8r_N}{3}.
\]
Set
\[
\mathbf z_1=(16,-4,0,\dots,0)^T,
\qquad
\mathbf z_2=(a_N,b_N,r_N,0,\dots,0)^T.
\]

For each \(j\in\{3,\dots,m\}\) with \(3\nmid j\), define
\[
\mathbf w_j=
\left(\frac{j^2-4}{3},\ -\frac{j^2-1}{3},\ 0,\dots,0,1,0,\dots,0\right)^T,
\]
where the \(1\) occurs in the \(j\)-th coordinate.

Let
\[
J_0=\{j\in\{3,\dots,m\}:3\mid j\}=\{j_1<\dots<j_r\}.
\]
If there is at least one multiple of $3$ in $J_0$, i.e., \(r\ge 1\), define
\[
\mathbf v_0=\left(j_1^2-4,\ -(j_1^2-1),\ 0,\dots,0,3,0,\dots,0\right)^T,
\]
where the entry \(3\) is in the \(j_1\)-th coordinate, and for \(2\le t\le r\),
\[
\mathbf v_t=
\left(\frac{j_t^2-j_1^2}{3},\ -\frac{j_t^2-j_1^2}{3},\ 0,\dots,0,-1,0,\dots,0,1,0,\dots,0\right)^T,
\]
where the entries \(-1\) and \(1\) occur in the \(j_1\)-th and \(j_t\)-th coordinates, respectively.
Denote
\[
\mathcal B_N=
\{\mathbf z_1,\mathbf z_2\}
\cup
\{\mathbf w_j:3\le j\le m,\ 3\nmid j\}
\cup
\bigl(\{\mathbf v_0\}\cup\{\mathbf v_t:2\le t\le r\}\bigr).
\]
Then $\mathcal{B}_{N}$ is a $\mathbb{Z}$-basis for $\Pi_{N}$ defined as in~\eqref{PIN}.
\end{lemma}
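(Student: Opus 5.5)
Three things have to be checked: every vector of $\mathcal B_N$ lies in $\Pi_N$; $\mathcal B_N$ has $m$ elements; and these vectors generate a sublattice of index $[\mathbb Z^m:\Pi_N]$. Linear independence then follows because the resulting $m\times m$ matrix has nonzero determinant.

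\textbf{Step 1: the index of $\Pi_N$.} Consider the homomorphism
\[
\phi:\mathbb Z^m\to \mathbb Z/12\times\mathbb Z/M,\qquad \mathbf a\mapsto(\alpha(\mathbf a),\beta(\mathbf a)).
\]
Its kernel is $\Pi_N$. To compute its image I would use the unit vectors $e_1\mapsto(1,1)$ and $e_2\mapsto(1,4)$, so that $e_2-e_1\mapsto(0,3)$. For $N\ge 6$ I would also use $e_3\mapsto(1,9)$. The aim is to show that $\phi$ is surjective exactly when $3\nmid M$. When $3\mid M$, the image should be the index-$3$ subgroup cut out by $\alpha\equiv\beta\pmod 3$, since $j^2\equiv 0$ or $1\pmod 3$ and the two values $j=1,3$ show that neither is forced. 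This gives $[\mathbb Z^m:\Pi_N]=12M/\gcd(3,M)$ or $12M$. I need to check this carefully, including the case $N=5$, where $M=5$ and $m=2$.

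\textbf{Step 2: membership.} I would compute $\alpha$ and $\beta$ for each vector.
\begin{itemize}
\item $\mathbf z_1$: $\alpha=12$ and $\beta=16-16=0$.
\item $\mathbf z_2$: $\alpha=a_N+b_N+r_N=(-3r_N)/3+r_N=0$, and $\beta=a_N+4b_N+9r_N=(3M-27r_N)/3+9r_N=M$.
\item $\mathbf w_j$: $\alpha=(j^2-4-j^2+1)/3+1=0$ and $\beta=(j^2-4-4j^2+4)/3+j^2=0$. Integrality holds because $j^2\equiv1\pmod 3$ when $3\nmid j$.
\item $\mathbf v_0$ and the $\mathbf v_t$: the same computation gives $\alpha=\beta=0$, with integrality coming from $9\mid j_t^2-j_1^2$.
\item For $N=5$, the columns of $B_5$ give $(12,0)$ and $(0,15)$, and $15\equiv 0\pmod 5$.
\end{itemize}
Counting, we get $2+\#\{3\le j\le m:3\nmid j\}+|J_0|=m$ vectors.

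\textbf{Step 3: the determinant.} Order the coordinates as $1,2$ followed by $3,\dots,m$. For every coordinate $j\ge3$ with $j\ne j_1$, the column $\mathbf w_j$ or $\mathbf v_t$ has its only entry beyond the first two coordinates (apart from possibly coordinate $j_1$) equal to $1$ at position $j$. So column operations, or a unitriangular change of basis, reduce the determinant to that of a $3\times3$ block (or $2\times2$ when $J_0=\emptyset$) in coordinates $1,2,j_1$.

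If $r\ge1$, the block consists of $\mathbf z_1$, $\mathbf z_2$ and $\mathbf v_0$ restricted to these coordinates. The $\mathbf z_2$ entry in coordinate $3$ only matters when $j_1=3$, which holds whenever $m\ge3$ and $3\mid 3$. In fact $j_1=3$ always in that case, so the block is
\[
\begin{pmatrix}16&a_N&5\\-4&b_N&-8\\0&r_N&3\end{pmatrix}.
\]
If $J_0=\emptyset$ (that is, $m\le 2$), the column $\mathbf w_3$ never arises in this role, and $\mathbf z_2$'s third coordinate is absent or absorbed. I would then compute $\det$ directly and use $3a_N=-M+5r_N$ and $3b_N=M-8r_N$ to simplify. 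The target is $|\det|=12M/\gcd(3,M)$, matching Step 1; for $N=5$, $\det B_5=60=12\cdot5$, as required.

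\textbf{Main obstacle.} I expect the obstacle to be Step 1 combined with the determinant bookkeeping. The determinant depends on $r_N$, and one must verify that it equals exactly the index in each residue class of $M$ modulo $3$. I would first expand the $3\times3$ determinant symbolically in $M$ and $r_N$, then check the residues $r_N=0,1,2$ separately. If the determinant turns out to be $4M$ or $12M$ instead of what I claimed, I would recheck the image computation in Step 1.
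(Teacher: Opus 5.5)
Your strategy (check membership, count $m$ vectors, compare $|\det B_N|$ with $[\mathbb Z^m:\Pi_N]$) is sound. However, Step 1 is wrong as stated, and with it the comparison breaks for every $N$ divisible by $3$.

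The map $\phi$ is surjective for all $N\ge5$, not only when $3\nmid M$. Indeed $\phi(e_2-e_1)=(0,3)$ and $\phi(e_3-e_1)=(0,8)$ give $(0,1)=3(0,3)-(0,8)$. Together with $\phi(e_1)=(1,1)$ this generates $\mathbb Z/12\times\mathbb Z/M$. For $N=5$, $(0,3)$ already generates $\{0\}\times\mathbb Z/5$. Your proposed image $\{\alpha\equiv\beta\pmod 3\}$ cannot contain $\phi(e_3)=(1,9)$; this is exactly your own remark that $j=1,3$ show neither residue is forced. So $[\mathbb Z^m:\Pi_N]=12M$ in all cases.

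Your $3\times3$ block also needs no case split in $r_N$. Expanding along the first column gives
$\det=16(3b_N+8r_N)+4(3a_N-5r_N)=16M-4M=12M$ identically.

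With your target $12M/\gcd(3,M)$, you would find $|\det B_N|=12M>4M$ for $N=6,9,12,\dots$. You would then conclude, wrongly, that $\mathcal B_N$ spans an index-$3$ sublattice of $\Pi_N$. With the corrected index the two numbers agree and your proof closes. Two smaller fixes:
\begin{itemize}
\item Record that $r_N\equiv 2M\pmod 3$ is what makes $a_N,b_N$ integers, so that $\mathbf z_2\in\mathbb Z^m$.
\item Drop the $J_0=\emptyset$ branch. It never occurs for $N\ge 6$, since $j_1=3$.
\end{itemize}

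Once repaired, your route is a legitimate alternative to the paper's. The paper uses the split exact sequence $0\to K\to\Pi_N\to 12\mathbb Z\times M\mathbb Z\to 0$ with $K=\{\mathbf u:\alpha(\mathbf u)=\beta(\mathbf u)=0\}$. Solving for $u_1,u_2$ identifies $K$ with $\{(u_3,\dots,u_m):\sum_{3\mid j}u_j\equiv 0\pmod 3\}$. That lattice has the obvious basis $\mathbf e_j$ ($3\nmid j$), $3\mathbf e_{j_1}$, $\mathbf e_{j_t}-\mathbf e_{j_1}$, which lifts to the $\mathbf w_j,\mathbf v_0,\mathbf v_t$. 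Meanwhile $\mathbf z_1,\mathbf z_2$ lift the generators $(12,0)$ and $(0,M)$.

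The paper's argument explains where the vectors come from and never needs the index of $\Pi_N$ or a determinant. Yours is a shorter mechanical check, but it depends on computing the index exactly, which is where it currently fails.
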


\begin{proof}
We first treat the case \(N\ge 6\), so \(m\ge 3\).

Set
\[
K=\{\mathbf u\in\mathbb Z^m:\alpha(\mathbf u)=0,\ \beta(\mathbf u)=0\}.
\]
We will prove that the vectors
\[
\{\mathbf w_j:3\nmid j,\ 3\le j\le m\}
\cup
\bigl(\{\mathbf v_0\}\cup\{\mathbf v_t:2\le t\le r\}\bigr)
\]
form a \(\mathbb Z\)-basis of \(K\). Let \(\mathbf u=(u_1,\dots,u_m)^T\in K\). Since \(\alpha(\mathbf u)=\beta(\mathbf u)=0\),
\[
u_1+u_2+\sum_{j=3}^m u_j=0,
\qquad
u_1+4u_2+\sum_{j=3}^m j^2u_j=0.
\]
Subtracting gives
\[
3u_2+\sum_{j=3}^m (j^2-1)u_j=0,
\]
hence
\[
u_2=-\frac13\sum_{j=3}^m (j^2-1)u_j,
\qquad
u_1=\frac13\sum_{j=3}^m (j^2-4)u_j.
\]
Therefore \(\mathbf u\in K\) if and only if the coordinates \(u_3,\dots,u_m\in\mathbb Z\) satisfy the condition
\[
\sum_{j=3}^m (j^2-1)u_j\equiv 0 \pmod 3.
\]
Now \(j^2\equiv 1\pmod 3\) if \(3\nmid j\), and \(j^2\equiv 0\pmod 3\) if \(3\mid j\), so this condition becomes
\[
\sum_{3\mid j,\ 3\le j\le m} u_j\equiv 0 \pmod 3.
\]

Thus \(K\) is the subgroup of \(\mathbb Z^{m-2}\) consisting of tuples \((u_3,\dots,u_m)\) such that the sum of the coordinates indexed by multiples of \(3\) is divisible by \(3\). A \(\mathbb Z\)-basis for this subgroup can be formulated in terms of elementary vectors:
\[
\mathbf e_j \quad (3\nmid j),
\qquad
3 \mathbf e_{j_1},
\qquad
\mathbf e_{j_t}-\mathbf e_{j_1}\quad (2\le t\le r).
\]

Substituting these basis vectors in the formulas for \(u_1\) and \(u_2\) yields exactly the vectors \(\mathbf w_j\), \(\mathbf v_0\), and \(\mathbf v_t\). Indeed, for \(3\mathbf e_{j_1}\) one gets
\[
u_1=\frac13(j_1^2-4)\cdot 3=j_1^2-4,
\qquad
u_2=-\frac13(j_1^2-1)\cdot 3=-(j_1^2-1),
\]
which gives the stated formula for \(\mathbf v_0\). Hence these vectors form a \(\mathbb Z\)-basis of \(K\).

Next, we show that \(\mathbf z_1,\mathbf z_2\in \Pi_{N}\). A direct calculation gives
\[
\alpha(\mathbf z_1)=12,\qquad \beta(\mathbf z_1)=0, \qquad \alpha(\mathbf z_2)=a_N+b_N+r_N=0,
\]
while
\[
\beta(\mathbf z_2)=a_N+4b_N+9r_N
=\frac{-M+5r_N}{3}+4\frac{M-8r_N}{3}+9r_N=M.
\]
So \(\mathbf z_1,\mathbf z_2\in\Pi_{N}\). Also every basis vector of \(K\) lies in \(\Pi_{N}\), since \(K\subseteq\Pi_{N}\).

Now let \(\mathbf a\in\Pi_{N}\). Write
\(
\alpha(\mathbf a)=12s,\) \(\beta(\mathbf a)=Mt
\)
for some \(s,t\in\mathbb Z\), and set
\(
\mathbf a'=\mathbf a-s\mathbf z_1-t\mathbf z_2.
\)
Then
\[
\alpha(\mathbf a')=\alpha(\mathbf a)-12s=0,
\qquad
\beta(\mathbf a')=\beta(\mathbf a)-Mt=0,
\]
so \(\mathbf a'\in K\). Since our chosen vectors form a \(\mathbb Z\)-basis of \(K\), it follows that \(\mathbf a\) is a \(\mathbb Z\)-linear combination of the vectors in \(\mathcal B_N\). Hence \(\mathcal B_N\) spans \(\Pi_{N}\).

It remains to prove linear independence. Suppose
\[
s\mathbf z_1+t\mathbf z_2+\mathbf k=0,
\]
where \(\mathbf k\in K\) is a \(\mathbb Z\)-linear combination of the chosen basis of \(K\). Applying \(\alpha\) gives \(12s=0\), hence \(s=0\). Applying \(\beta\) then gives \(Mt=0\), hence \(t=0\). Therefore \(\mathbf k=0\), and since our chosen vectors form a basis of \(K\), all coefficients are zero. Thus the vectors in \(\mathcal B_N\) are \(\mathbb Z\)-linearly independent. Therefore \(\mathcal B_N\) is a \(\mathbb Z\)-basis of \(\Pi_{N}\) for \(N\ge 6\).

For \(N=5\), let \(\mathbf x=(16,-4)^T\) and \(\mathbf y=(-5,5)^T\). Then
\[
\alpha(\mathbf x)=12,\qquad \beta(\mathbf x)=0,
\qquad
\alpha(\mathbf y)=0,\qquad \beta(\mathbf y)=15.
\]
So \(\mathbf x,\mathbf y\in\Pi_5\). If \(\mathbf a=(a_1,a_2)\in\Pi_5\), write
\(\alpha(\mathbf a)=12s,\) \(\beta(\mathbf a)=5t
\)
with \(s,t\in\mathbb Z\). Then \(\mathbf a-s\mathbf x=(\delta,-\delta)\) for some \(\delta\in\mathbb Z\), and
\(
\beta(\mathbf a-s\mathbf x)=3\delta
\)
must be divisible by \(5\), so \(\delta\in 5\mathbb Z\). Hence \(\mathbf a-s\mathbf x\in\mathbb Z\mathbf y\). Thus \(\mathbf x,\mathbf y\) span \(\Pi_5\). Since
\(
\det(B_5)=60\ne 0,
\)
they are linearly independent. Therefore the columns of \(B_5\) form a \(\mathbb Z\)-basis of \(\Pi_5\).
\end{proof}

For $z=(Q_{1},Q_{2})\in\mathbb{Q}^{2}-\mathbb{Z}^{2}$ and $q_{z}=e^{2\pi i(Q_{1}\tau+Q_{2})}$, a Klein form $K_{(Q_{1},Q_{2})}(\tau)$ is  defined by
$$
K_{(Q_{1},Q_{2})}(\tau)=e^{\pi iQ_{2}(Q_{1}-1)}q^{\frac{1}{2}Q_{1}(Q_{1}-1)}(1-q_{z})\prod_{n=1}^{\infty}(1-q_{z}q^{n})(1-q_{z}^{-1}q^{n})(1-q^{n})^{-2}.
$$
These, along with the Dedekind eta function $\eta(\tau) = q^{1/24}\prod_{n=1}^{\infty}(1-q^{n})$, are useful in determining modular properties of the products $f_{\mathbf a}^{(N)}$.

\begin{lemma}[\cite{KL}]\label{klf}
Let $K_{(Q_{1},Q_{2})}(\tau)$ be defined as above. Then the following assertions hold.
\begin{enumerate}

\item For $(Q_{1},Q_{2})\in\mathbb{Q}^{2}-\mathbb{Z}^{2}$ and $(s_{1},s_{2})\in\mathbb{Z}^{2}$, one has
  \begin{align*}
    K_{(-Q_{1},-Q_{2})}(\tau)&=-K_{(Q_{1},Q_{2})}(\tau) \\
    K_{(Q_{1},Q_{2})+(s_{1},s_{2})}(\tau) &=(-1)^{s_{1}s_{2}+s_{1}+s_{2}}e^{-\pi i(s_{1}Q_{2}-s_{2}Q_{1})}K_{(Q_{1},Q_{2})}(\tau).
  \end{align*}
\item For any $\begin{pmatrix}a&b\\ c&d\end{pmatrix}\in {\rm SL}_{2}(\mathbb{Z})$, 
  \begin{align*}
K_{(Q_{1},Q_{2})}\left(\frac{a\tau+b}{c\tau+d}\right)=(c\tau+d)^{-1}K_{(Q_{1}a+Q_{2}c,Q_{1}b+Q_{2}d)}(\tau).     
  \end{align*}
\item 
The order of vanishing of $K_{(Q_{1},Q_{2})}(\tau)$ at the cusp $i\infty$ is given by
\begin{align*}
\Ord_{\infty}(K_{(Q_{1},Q_{2})})=\frac{1}{2}\langle Q_{1}\rangle\left(\langle Q_{1}\rangle -1\right),   
\end{align*}
where $\langle r\rangle=r-\lfloor r\rfloor$ denotes the fractional part of~$r$.
\end{enumerate}
\end{lemma}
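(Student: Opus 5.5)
The plan is to derive all three assertions directly from the product definition of $K_{(Q_1,Q_2)}$, treating it as a function of $z=Q_1\tau+Q_2$. The cleanest route is to identify $K_{(Q_1,Q_2)}(\tau)$ with the classical Klein form $\mathfrak k(z;\tau)=e^{-\eta(z)z/2}\sigma(z)$ restricted to $z=Q_1\tau+Q_2$. Here $\sigma$ is the Weierstrass sigma function for the lattice $\mathbb Z\tau+\mathbb Z$, and $\eta(z)$ is the $\mathbb R$-linear extension of its quasi-period map, written this way to avoid a clash with the Dedekind $\eta$. First, I will verify this identification via the Jacobi triple product: $\sigma(z)$ has the product expansion
\[
\sigma(z)=\frac{1}{2\pi i}e^{\eta_2 z^2/2}(q_z^{1/2}-q_z^{-1/2})\prod_{n\ge1}\frac{(1-q_zq^n)(1-q_z^{-1}q^n)}{(1-q^n)^2}.
\]
Substituting $z=Q_1\tau+Q_2$ and using the Legendre relation to simplify the exponential prefactor should reproduce $e^{\pi i Q_2(Q_1-1)}q^{Q_1(Q_1-1)/2}(1-q_z)\prod(\cdots)$, up to a universal constant.

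Given this identification, (2) follows from the homogeneity of $\mathfrak k$: it has degree $1$ in the lattice, so $\mathfrak k(z;\lambda L)=\lambda\,\mathfrak k(z/\lambda\ldots)$. Applying a change of basis by $\gamma\in{\rm SL}_2(\mathbb Z)$ maps $(Q_1,Q_2)$ to $(Q_1,Q_2)\gamma$ and produces the factor $(c\tau+d)^{-1}$. For (1), oddness of $\sigma$ gives $K_{-Q}=-K_Q$. The translation formula follows from the quasi-periodicity $\sigma(z+\omega)=\pm e^{\eta(\omega)(z+\omega/2)}\sigma(z)$, combined with the Legendre relation $\eta_1\omega_2-\eta_2\omega_1=2\pi i$. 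The sign $(-1)^{s_1s_2+s_1+s_2}$ is $\psi(\omega)$, which is $+1$ exactly when $\omega\in 2L$. Alternatively, I can check (1) directly from the product: shifting $Q_1\mapsto Q_1+1$ sends $q_z\mapsto q_zq$, which telescopes the product and multiplies by $-q_z^{-1}$; the $Q_2$ shift only changes the roots of unity.

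For (3), I will work straight from the product. Using the translation formula of (1), I first reduce to $0\le Q_1<1$, since this changes $K$ only by a nonzero constant. Then every factor $(1-q_zq^n)$ and $(1-q_z^{-1}q^n)$ with $n\ge1$ is $1+o(1)$ as $q\to0$, and $(1-q^n)^{-2}\to1$. The factor $(1-q_z)$ tends to a nonzero constant: if $Q_1>0$ this is $1$, and if $Q_1=0$ it is $1-e^{2\pi iQ_2}\neq0$ because $Q_2\notin\mathbb Z$. Hence the leading term is $q^{\frac12\langle Q_1\rangle(\langle Q_1\rangle-1)}$, which gives the stated order (in the variable $q$).

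I expect the main obstacle to be the bookkeeping of the exponential prefactors in (1), namely pinning down the exact constant $e^{-\pi i(s_1Q_2-s_2Q_1)}$ and the sign. To control this, I would verify the two generators $s=(1,0)$ and $s=(0,1)$ separately from the product, then combine them while tracking the cross term that contributes $(-1)^{s_1s_2}$. In any case these are the standard Kubert–Lang properties, so citing \cite{KL} suffices.
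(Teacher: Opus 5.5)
Your proposal is correct. Note that the paper gives no proof of this lemma: it quotes the three properties from Kubert--Lang. Your sketch is essentially the cited source's own derivation. You identify $K_{(Q_1,Q_2)}$ with $\mathfrak k(Q_1\tau+Q_2;[\tau,1])$. You then get (2) from homogeneity, $\mathfrak k(\lambda z;\lambda L)=\lambda\,\mathfrak k(z;L)$, together with the basis-independence of the $\mathbb R$-linear quasi-period map; that basis-independence is what turns the change of basis into $(Q_1,Q_2)\mapsto(Q_1,Q_2)\gamma$. Finally you get (1) from oddness and quasi-periodicity of $\sigma$.

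Three details to fix in the write-up:
\begin{enumerate}
\item Normalization. Carrying out your identification gives $K_{(Q_1,Q_2)}=-2\pi i\,\mathfrak k_{(Q_1,Q_2)}$, with the paper's normalization dropping the factor $-1/(2\pi i)$. This constant is independent of $(Q_1,Q_2)$, so it cancels in all three assertions.
\item Direct product checks. These do work and are fully self-contained. The generators give multipliers $-e^{-\pi iQ_2}$ for $s=(1,0)$ and $-e^{\pi iQ_1}$ for $s=(0,1)$. The stated multiplier $\phi(Q,s)$ satisfies $\phi(Q,s+t)=\phi(Q+s,t)\,\phi(Q,s)$, and the cross term $e^{-\pi i(t_1s_2-t_2s_1)}=(-1)^{t_1s_2+t_2s_1}$ supplies exactly the $(-1)^{s_1s_2}$ you anticipated. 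Negative $s$ is covered by the same identity. Your argument for (3), after reducing to $0\le Q_1<1$, is complete as written. Only (2) genuinely needs the analytic input from $\sigma$, or equivalently the modular transformation of $\vartheta_1$; it cannot be read off the product.
\item Legendre relation. The relation you wrote, $\eta_1\omega_2-\eta_2\omega_1=2\pi i$, is correct for the labelling $\omega_1=1$, $\omega_2=\tau$. With $\omega_1=\tau$, $\omega_2=1$ it must read $\eta_2\omega_1-\eta_1\omega_2=2\pi i$, i.e.\ $\eta(1)\tau-\eta(\tau)=2\pi i$. Mixing the two conventions would turn the exponent $\pi iQ_1z$ in your identification step into $-\pi iQ_1z$. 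It would also invert the factor $e^{-\pi i(s_1Q_2-s_2Q_1)}$ in (1).
\end{enumerate}
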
 

The order formula in the next lemma is an immediate consequence of the classical transformation formula for $\eta(\tau)$ and Parts (2) and (3) of Lemma \ref{klf}.
\begin{lemma} \label{ord}
Suppose $a_{1},\ldots,a_{m}\in\mathbb{Z}$ are such that  \[\displaystyle f = \prod_{i=1}^{m}\eta(N\tau)^{2a_i}K_{(i/N,0)}(N\tau)^{a_i} \] is a meromorphic modular function for $\Gamma_1(N)$. Then, the order at the cusp $\mathfrak{c}=[a/c]$ in terms of its local parameter is  $$\Ord_{[a/c]} \left( f \right)  = \frac{g(\mathfrak c)}{2}\ \sum_{i=1}^{m} a_{i}B_2\!\Bigl(\Big\langle \frac{ia}{g(\mathfrak c)}\Big\rangle\Bigr),$$
where $g(\mathfrak{c})$ is defined by~\eqref{gcusp}, and $\langle r\rangle=r-\lfloor r\rfloor$ denotes the fractional part of~$r\in\mathbb{R}$.
\end{lemma}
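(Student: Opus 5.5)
We use the fact that the order of a weight-zero function at a cusp is read off from the leading exponent of $f\circ\gamma$ in the local parameter $q_{\mathfrak c}=e^{2\pi i\tau/h_{\mathfrak c}}$. The plan is to compute the slash transform of each factor of $f$ under some $\gamma\in{\rm SL}_2(\mathbb Z)$ with $\gamma(\infty)=[a/c]$. The total weight of $f$ is $\sum_i a_i(2\cdot\tfrac12-1)=0$, since each $\eta^{2}$ has weight $1$ and each $K$ has weight $-1$. Hence the automorphy factors $(c\tau+d)^{\pm}$ cancel, and we may simply look at $f(\gamma\tau)$.

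\textbf{Step 1: reduce to level one.} Write $\gamma=\begin{pmatrix} a & b\\ c& d\end{pmatrix}$ and $N\gamma\tau=\frac{Na\tau+Nb}{c\tau+d}$. Put $g=\gcd(c,N)$, and factor
\[
\begin{pmatrix} Na & Nb\\ c & d\end{pmatrix}=\gamma'\begin{pmatrix} g & \ast\\ 0 & N/g\end{pmatrix},
\qquad \gamma'=\begin{pmatrix} a' & b'\\ c' & d'\end{pmatrix}\in{\rm SL}_2(\mathbb Z),
\]
where the first column of $\gamma'$ is $(Na/g,\,c/g)^T$; this column is primitive because $\gcd(a,c)=1$. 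Then $N\gamma\tau=\gamma'\tau'$ with $\tau'=(g\tau+\ast)/(N/g)$. Apply the $\eta$ transformation law and Lemma~\ref{klf}(2) to $\eta(\gamma'\tau')^{2a_i}K_{(i/N,0)}(\gamma'\tau')^{a_i}$. The factors $(c'\tau'+d')^{\pm}$ cancel because the weight is zero, and the roots of unity are irrelevant for orders. This gives $\eta(\tau')^{2a_i}K_{(ia'/N,\,ib'/N)}(\tau')^{a_i}$ up to a constant. Here $ia'/N = ia/g$, so the first coordinate modulo $\mathbb Z$ is $\langle ia/g\rangle$; Lemma~\ref{klf}(1) lets us shift it into $[0,1)$ at the cost of constants.

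\textbf{Step 2: read off orders in $\tau'$.} Lemma~\ref{klf}(3) gives the order $\frac12\langle ia/g\rangle(\langle ia/g\rangle-1)$ for the Klein factor in the variable $e^{2\pi i\tau'}$, and $\eta^{2}$ contributes $\frac{2}{24}=\frac1{12}$. Summing, each $i$ contributes $a_i\cdot\frac12\bigl(x^2-x+\frac16\bigr)=\frac{a_i}{2}B_2(x)$ with $x=\langle ia/g\rangle$. A degenerate case needs care: if $g\mid ia$, the Klein form's first coordinate is an integer. In that case the second coordinate $ib'/N$ must be non-integral, and the order formula with $\langle\cdot\rangle=0$ still gives $0$, so the formula remains valid.

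\textbf{Step 3: convert to the local parameter.} Since $e^{2\pi i\tau'}=e^{2\pi i (g^2\tau+\ast)/N}$, the exponent in terms of $q_{\mathfrak c}=e^{2\pi i\tau/h_{\mathfrak c}}$ with $h_{\mathfrak c}=N/g$ is multiplied by $g^2/N\cdot N/g=g$. This yields $\Ord_{[a/c]}(f)=\frac g2\sum_i a_iB_2(\langle ia/g\rangle)$.

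The main obstacle is Step 1: we must verify the matrix factorization and check that the first row of $\gamma'$ gives $a'\equiv Na/g$ in the right normalization, so that the Klein index becomes exactly $ia/g$ modulo $1$. We also need to check that the constant in front of $\tau'$ is $g^2/N$, and not $g/N$ or similar.
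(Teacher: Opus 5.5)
Your proposal is correct and follows the route the paper indicates. The paper only remarks that the formula is an immediate consequence of the $\eta$ transformation law and Lemma~\ref{klf}(2),(3); your factorization $\bigl(\begin{smallmatrix} N&0\\0&1\end{smallmatrix}\bigr)\gamma=\gamma'\bigl(\begin{smallmatrix} g&\ast\\0&N/g\end{smallmatrix}\bigr)$, the weight-zero cancellation, the order $\tfrac12 B_2(\langle ia/g\rangle)$ per factor in $e^{2\pi i\tau'}$, and the rescaling by $g$ are exactly the computation it leaves implicit. The two points you flagged both check out: $a'=Na/g$ holds exactly because $g\,(a',c')^T=(Na,c)^T$, and $\tau'=g(g\tau+\ast)/N$, so $e^{2\pi i\tau'}$ is a root of unity times $q_{\mathfrak c}^{\,g}$.
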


\begin{lemma}\label{lem:ANBN-equals-VN}
Let $\mathfrak c_0, \mathfrak c_1, \ldots, \mathfrak c_{m}$ be representatives of the Galois orbits determined by the action \eqref{galois} on the cusps of $X$. For each cusp $\mathfrak c=[a/c]$, with $g(\mathfrak c)$ defined by~\eqref{gcusp}, define the vector
\begin{align} \label{beta}
\beta_{\mathfrak c}
=
\Bigl(\beta_{\mathfrak c,1},\dots,\beta_{\mathfrak c,m}\Bigr)\in \mathbb Q^m,
\qquad
\beta_{\mathfrak c,i}=\frac{g(\mathfrak c)}{2}\,B_2\!\Bigl(\Big\langle \frac{ia}{g(\mathfrak c)}\Big\rangle\Bigr),
\end{align} and construct a matrix from the row vectors corresponding to the cusps $\mathfrak c_0, \ldots \mathfrak c_{m-1}$
\begin{align} \label{val}
A_N=\bigl(\beta_{\mathfrak{c}_{r},j}\bigr)_{0\le r\le m-1,\ 1\le j\le m}.
\end{align}
Let $B_{N} = (b_{i,j})$ be the matrix formed by the column vectors of $\mathcal{B}_{N}$ in Lemma \ref{prop:explicit-BN}.
If $V_{N}$ is the class matrix defined by \eqref{clV}, then
\[
V_N=A_NB_N.
\]
In particular, $A_N$ has rank $m$.
\end{lemma}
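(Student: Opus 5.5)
The plan is to take the $\mathbb Z$-basis of $\mathcal U_{\mathbb Q}$ given by Lemma~\ref{ylem} and Lemma~\ref{prop:explicit-BN}, compute the divisor of each basis unit with Lemma~\ref{ord}, and then express that divisor in the basis $\mathcal F_0,\dots,\mathcal F_{m-1}$.

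\textbf{Choice of units.} For each column $\mathbf b_i=(b_{1,i},\dots,b_{m,i})^T$ of $B_N$, Lemma~\ref{ylem} shows that
\[
u_i=\prod_{j=1}^m\Bigl(q^{\frac N2B_2(j/N)}(q^j,q^{N-j};q^N)_\infty\Bigr)^{b_{j,i}}
\]
is a modular unit with $\mathbb Q$-rational cuspidal divisor. Since $\mathcal B_N$ is a $\mathbb Z$-basis of $\Pi_N$ and the map $\Pi_N\to\mathcal U_{\mathbb Q}$ is an isomorphism, $u_1,\dots,u_m$ is a $\mathbb Z$-basis of $\mathcal U_{\mathbb Q}$. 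We are free to use this particular basis in \eqref{clV}.

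\textbf{Rewriting as Klein forms.} Directly from the product definition of the Klein form,
\[
\eta(N\tau)^2K_{(j/N,0)}(N\tau)=q^{\frac N2B_2(j/N)}(q^j,q^{N-j};q^N)_\infty .
\]
Here the exponents match because $\frac N{12}+\frac N2\frac jN(\frac jN-1)=\frac N2B_2(j/N)$, and the leading constant $e^{\pi i\cdot0}=1$. So each $u_i$ has exactly the shape in Lemma~\ref{ord}, with $a_j=b_{j,i}$. That lemma then gives, at any cusp $\mathfrak c$,
\[
\Ord_{\mathfrak c}(u_i)=\sum_j\beta_{\mathfrak c,j}b_{j,i}=(\beta_{\mathfrak c}\cdot\mathbf b_i).
\]
Two points need checking here. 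First, $\Ord_{\mathfrak c}$ in the local parameter $q_{\mathfrak c}$ is the order on $X_1(N)$. This holds for $N\ge5$, since $\Gamma_1(N)$ has no irregular cusps and $-I\notin\Gamma_1(N)$, so the local parameter is a uniformizer. Second, the formula depends only on the Galois orbit of $\mathfrak c$. This is automatic because $\operatorname{div}(u_i)$ is $\mathbb Q$-rational; it can also be seen directly, since $g(\mathfrak c)$ and $a \bmod g$ up to sign are orbit invariants and $B_2(\langle x\rangle)=B_2(\langle -x\rangle)$.

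\textbf{Change of basis.} Because the divisor is rational, we have $\operatorname{div}(u_i)=\sum_{r=0}^m\Ord_{\mathfrak c_r}(u_i)\,C_r$. It has degree zero, so the coefficient of $C_m$ equals $-\sum_{r<m}\mu_r\Ord_{\mathfrak c_r}(u_i)$. Hence
\[
\operatorname{div}(u_i)=\sum_{r=0}^{m-1}\Ord_{\mathfrak c_r}(u_i)\,(C_r-\mu_rC_m)=\sum_{r=0}^{m-1}\Ord_{\mathfrak c_r}(u_i)\,\mathcal F_r .
\]
By uniqueness of the coordinates in \eqref{clV}, $V_{r+1,i}=\Ord_{\mathfrak c_r}(u_i)=(A_NB_N)_{r+1,i}$, which gives $V_N=A_NB_N$.

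\textbf{Rank.} The divisor map is injective on $\mathcal U_{\mathbb Q}$, so $\operatorname{div}(u_1),\dots,\operatorname{div}(u_m)$ are linearly independent and $V_N$ has rank $m$. As an $m\times m$ factor of a rank-$m$ product, $A_N$ must have rank $m$.

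The main obstacle is the justification that the orders from Lemma~\ref{ord} are genuinely orders on $X$, with consistent normalization of widths and local parameters, and that they are constant on Galois orbits. The rest is bookkeeping.
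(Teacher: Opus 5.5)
Your proposal is correct and follows essentially the same route as the paper. You take the units $u_i$ attached to the columns of $B_N$, compute $\Ord_{\mathfrak c_r}(u_i)=(A_NB_N)_{r+1,i}$ via Lemma~\ref{ord}, and identify these orders with the coordinates of $\operatorname{div}(u_i)$ in the basis $\mathcal F_0,\dots,\mathcal F_{m-1}$. You use the degree-zero condition to pass from the $C_r$ to the $\mathcal F_r$, whereas the paper expands $\mathcal F_r=C_r-\mu_rC_m$ and reads off coefficients; this is the same computation.

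Your extra checks fill in details the paper leaves implicit: the identity $\eta(N\tau)^2K_{(j/N,0)}(N\tau)=q^{\frac N2B_2(j/N)}(q^j,q^{N-j};q^N)_\infty$, the regularity of cusps for $N\ge5$, and the Galois invariance of the orders. The same holds for your explicit rank argument via injectivity of $\operatorname{div}$ on $\mathcal U_{\mathbb Q}$.
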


\begin{proof}
Fix \(i\in\{1,\dots,m\}\).  
For each \(0\le r\le m-1\), by Lemma \ref{ord} and the definition of $B_{N}$, 
\[
\Ord_{\mathfrak c_r}(u_i)=\sum_{j=1}^m b_{j,i}\,\Ord_{\mathfrak c_r}(\eta(N\tau)^{2}K_{(j/N,0)}(N\tau)).
\]
By the definition of \(\beta_{\mathfrak c_r,j}\), the order of \(\eta(N\tau)^{2}K_{(j/N,0)}(N\tau)\) at \(\mathfrak c_r\) is
\(
\beta_{\mathfrak c_r,j}.
\)
Therefore
\[
\Ord_{\mathfrak c_r}(u_i)=\sum_{j=1}^{m} \beta_{\mathfrak c_r,j} b_{j,i} = (A_NB_N)_{r+1,i}. 
\]
By definition of \(V_N\),
\[
\operatorname{div}(u_i)=\sum_{r=0}^{m-1}(V_N)_{r+1,i}\mathcal F_r.
\]
Using \(\mathcal F_r=C_r-\mu_r C_m\), this becomes
\[
\operatorname{div}(u_i)
=
\sum_{r=0}^{m-1}(V_N)_{r+1,i}C_r
-
\left(\sum_{r=0}^{m-1}\mu_r (V_N)_{r+1,i}\right) C_m.
\]
Hence, for each \(0\le r\le m-1\), the coefficient of \(C_r\) in \(\operatorname{div}(u_i)\) is
\(
(V_N)_{r+1,i},
\)
and this agrees with the order of \(u_i\) at the cusp orbit represented by \(\mathfrak{c}_r\), namely 
\[
(V_N)_{r+1,i}=\Ord_{\mathfrak c_r}(u_i) = (A_NB_N)_{r+1,i}. 
\]

Finally, the last assertion on $A_{N}$ in the lemma follows from combining Lemmas~\ref{ylem} and~\ref{ord}.
\end{proof}

\section{The products $f^{(N)}_{\mathbf a}(\tau)$ and their polytopes}

 In this section, we  derive a characterization of $L_N(k)$ defined as in~\eqref{LNA} in terms of the $\mathbb{Q}$-rational cuspidal divisor class group. This description of the lattice will be useful for counting the polytopes corresponding to $L_N(k)$. 

We include the following elementary congruence to aid the reader in translating the level $N$ modularity conditions from \cite{KL} to those in this work.
\begin{lemma} \label{eqv}
Let $S_{2}(\mathbf{a})$ be defined as in~\eqref{S1S2}. For all \(N\ge5\),
\[
S_2(\mathbf a)\equiv0\pmod{N\gcd(2,N)} \quad \Longleftrightarrow \quad
\sum_{j=1}^{m}j^2a_j\equiv0\pmod{N\gcd(2,N)}.
\]
\end{lemma}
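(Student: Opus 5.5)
We write $S_2(\mathbf a)=\sum_{j=1}^m j(N-j)a_j = N\sum_{j=1}^m j a_j - \sum_{j=1}^m j^2 a_j$. Set $M=N\gcd(2,N)$. Then $S_2(\mathbf a)\equiv -\beta(\mathbf a)\pmod M$ holds exactly when $M$ divides $N\sum_j j a_j$. Since $\mathbf a\mapsto -\mathbf a$ is irrelevant to divisibility, this congruence gives the claimed equivalence at once. We split into two cases according to the parity of $N$.

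\emph{Case $N$ odd.} Here $M=N$. Then $N\sum_j ja_j\equiv 0\pmod N$ trivially, so $S_2(\mathbf a)\equiv -\beta(\mathbf a)\pmod N$ and the equivalence is immediate.

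\emph{Case $N$ even.} Here $M=2N$, and we need $N\sum_j j a_j\equiv 0\pmod{2N}$. This fails in general, since $\sum_j ja_j$ may be odd. So the plan is to show the equivalence directly instead of the congruence. Both conditions imply a parity condition, and under that condition the difference vanishes modulo $2N$. Work modulo $2$: since $N$ is even, $j(N-j)\equiv j^2\equiv j\pmod 2$. Hence $S_2(\mathbf a)\equiv\beta(\mathbf a)\equiv\sum_j ja_j\pmod 2$. If either $S_2(\mathbf a)$ or $\beta(\mathbf a)$ is divisible by $2N$, it is in particular even, and so $\sum_j ja_j$ is even. Then $N\sum_j ja_j\equiv0\pmod{2N}$, which gives $S_2(\mathbf a)\equiv-\beta(\mathbf a)\pmod{2N}$. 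Therefore one side vanishes modulo $2N$ if and only if the other does.

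The only delicate point is this even case. There the naive congruence $S_2\equiv-\beta$ modulo $2N$ is false without the parity input, and the resolution is the mod $2$ reduction $j(N-j)\equiv j^2\equiv j$.
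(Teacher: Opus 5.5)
Your proof is correct and follows essentially the same route as the paper: write $S_2(\mathbf a)=N\sum_j ja_j-\sum_j j^2a_j$, dispose of odd $N$ trivially, and for even $N$ use the parity relation $j\equiv j^2\pmod 2$ to force $\sum_j ja_j$ to be even. Your version is slightly more explicit, since you also use $j(N-j)\equiv j\pmod 2$ to handle the direction that starts from $S_2(\mathbf a)\equiv 0\pmod{2N}$, which the paper leaves implicit.
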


\begin{proof}
Since
\[
S_2(\mathbf a)
=
\sum_{j=1}^{m}j(N-j)a_j
=
N\sum_{j=1}^{m}ja_j
-
\sum_{j=1}^{m}j^2a_j,
\]
the claim of the lemma is clearly true when \(N\) is odd. When \(N\) is even, we work modulo \(2N\). The difference between
\(S_2(\mathbf a)\) and \(-\sum j^2a_j\) is
\(
N\sum_{j=1}^{m}ja_j.
\)
This term is divisible by \(2N\) exactly when
\(
\sum_{j=1}^{m}ja_j\equiv0\pmod2.
\)
The required claim follows from $j \equiv j^{2} \pmod{2}.$
\end{proof}

\begin{lemma}\label{thef}
For a cusp representative \(\mathfrak c=[a/c]\), let
\(
g=g(\mathfrak c)
\)
be defined by~\eqref{gcusp}.
Define
\begin{align}\label{ty}
v_{[a/c]}\!\left(f^{(N)}_{\mathbf a}\right)
=
\frac{g}{24}a_0
+
\frac{g}{2}\sum_{j=1}^{m}
a_j
B_2\left(\left\langle \frac{ja}{g}\right\rangle\right),
\qquad
B_2(x)=x^2-x+\frac16.
\end{align}
Then
\(
f^{(N)}_{\mathbf a}\in M_{a_0/2}(\Gamma_1(N))
\)
if and only if
\(
v_{[a/c]}\!\left(f^{(N)}_{\mathbf a}\right)\ge0
\)
for every cusp \([a/c]\in\mathfrak C_N\), and
\begin{equation}\label{wh}
S_1(a_0,\mathbf a)\equiv 0\pmod{24},
\qquad
S_{2}(\mathbf a)\equiv0\pmod{N\gcd(2,N)}.
\end{equation}
\end{lemma}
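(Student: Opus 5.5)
The plan is to rewrite $f^{(N)}_{\mathbf a}$ as a product of eta quotients and Klein forms and then apply the criteria of \cite{KL} together with Lemmas~\ref{klf} and~\ref{ord}. From the definition of $K_{(Q_1,Q_2)}$ with $(Q_1,Q_2)=(j/N,0)$ and $\tau\mapsto N\tau$ we get $q_z=q^j$, and hence
\[
\eta(N\tau)^2K_{(j/N,0)}(N\tau)=q^{\frac N2 B_2(j/N)}(q^j,q^{N-j};q^N)_\infty,
\]
up to a harmless sign. Since $B_2(j/N)=\frac{j^2}{N^2}-\frac jN+\frac16$, we have $\frac N2B_2(j/N)=\frac{N}{12}-\frac{j(N-j)}{2N}$. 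Also $\eta(N\tau)^{a_0}=q^{Na_0/24}(q^N;q^N)_\infty^{a_0}$. Collecting the $q$-powers gives exactly the exponent $\frac N{24}S_1(a_0,\mathbf a)-\frac1{2N}S_2(\mathbf a)$, so
\[
f^{(N)}_{\mathbf a}(\tau)=\pm\,\eta(N\tau)^{a_0}\prod_{j=1}^m\bigl(\eta(N\tau)^2K_{(j/N,0)}(N\tau)\bigr)^{a_j}.
\]
The total eta power is $S_1(a_0,\mathbf a)$, and the Klein forms contribute weight $-\sum a_j$. The overall weight is therefore $\frac12S_1-\sum a_j=a_0/2$.

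The next step is the modularity criterion on $\Gamma_1(N)$. I would invoke the standard theorem of Kubert--Lang (the one used in \cite{KL}): a product $\prod_j K_{(j/N,0)}(N\tau)^{a_j}$ transforms under $\Gamma_1(N)$ without character iff $\sum_j j^2a_j\equiv 0\pmod{N\gcd(2,N)}$. This can be checked via Lemma~\ref{klf}(1),(2): $(j/N,0)\gamma$ is congruent to $(j/N,0)$ modulo $\mathbb Z^2$ after conjugation by $\mathrm{diag}(N,1)$, and the roots of unity from part (1) multiply to $e^{-\pi i\,(\cdot)\sum j^2a_j/N}$ up to signs. Similarly, $\eta(N\tau)^{S_1}$ transforms under $\Gamma_0(N)$ with the eta multiplier, which is trivial on $\Gamma_1(N)$ once $24\mid S_1$, combined with the Klein contribution. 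By Lemma~\ref{eqv} the second condition is equivalent to $S_2(\mathbf a)\equiv 0\pmod{N\gcd(2,N)}$. This gives sufficiency of \eqref{wh} for the transformation law. For necessity, the translation $\tau\mapsto\tau+1\in\Gamma_1(N)$ forces the $q$-exponent $\frac N{24}S_1-\frac1{2N}S_2$ to be an integer. Applying a suitable element $\begin{pmatrix}1&0\\N&1\end{pmatrix}$, or comparing with the multiplier computation, forces each congruence separately. I expect this splitting of the combined multiplier into the two independent conditions to be the main obstacle. The cleanest route is to note that the multiplier of $\gamma$ is a character of $\Gamma_1(N)$ depending on $b$ and $c/N$ through $\exp\bigl(2\pi i(\frac{bS_1}{24}\cdots)\bigr)$ and $\exp\bigl(\pi i\, b\sum j^2a_j/N\bigr)$-type factors, and then to choose $\gamma$ isolating each one.

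Finally, holomorphy: $f$ is nonvanishing and holomorphic on $\mathbb H$, so $f\in M_{a_0/2}(\Gamma_1(N))$ iff its order at every cusp is $\ge 0$. Lemma~\ref{ord} gives the order of the Klein/eta part at $[a/c]$ as $\frac g2\sum_j a_jB_2(\langle ja/g\rangle)$. The extra factor $\eta(N\tau)^{a_0}$ has order $\frac{g^2}{24N}a_0$ in $q$ at $[a/c]$, by the eta transformation formula. Multiplied by the cusp width $N/g$, this gives $\frac g{24}a_0$ in the local parameter. Adding the two contributions yields $v_{[a/c]}$ in \eqref{ty}, and the result follows.
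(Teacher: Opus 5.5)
Your proposal is correct and follows essentially the paper's route: the eta--Klein factorization of $f^{(N)}_{\mathbf a}$, the Kubert--Lang transformation law for sufficiency (you verify $\Gamma_1(N)$ directly, while the paper verifies $\Gamma(N)$ and then adjoins $\left(\begin{smallmatrix}1&1\\0&1\end{smallmatrix}\right)$), and Lemma~\ref{ord} together with the eta order $g/24$ for holomorphy at the cusps. The necessity step you flag as the main obstacle closes at once with the element you name: by Lemma~\ref{klf}(2) each $K_{(j/N,0)}(N\tau)$ is exactly invariant (in weight $-1$) under $\left(\begin{smallmatrix}1&0\\N&1\end{smallmatrix}\right)$, while $\eta(N\tau)^{S_1}$ picks up the factor $e^{-2\pi i S_1/24}$, so $24\mid S_1$ (this is the paper's observation that $\Ord_{[0/1]}(f^{(N)}_{\mathbf a})=S_1/24$ must be an integer), and then $T$-invariance gives $S_2(\mathbf a)/(2N)\in\mathbb Z$, hence $S_2(\mathbf a)\equiv 0\pmod{N\gcd(2,N)}$.
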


\begin{proof}
   From the definition of Klein forms,  one can verify that
\begin{align}
\label{fak}
f_{{\bf a}}^{(N)}(\tau)=\eta(N\tau)^{S_{1}(a_0,{\bf a})}\prod_{i=1}^{m}K_{(i/N,0)}(N\tau)^{a_{i}},
\end{align}
where in particular,
\begin{align} \label{kfp}
\prod_{i=1}^{m}K_{(i/N,0)}(N\tau)^{a_{i}}\in q^{-\frac{1}{2N}S_{2}({\bf a})}(1+O(q)).
\end{align}

When $f_{{\bf a}}^{(N)}(\tau)\in M_{a_{0}/2}(\Gamma_{1}(N))$, then $\Ord_{[a/c]} f^{(N)}_{\mathbf a}(\tau)\geq0$ for any cusp $[a/c]$.  In particular, the
order at the cusp \([0/1]\) is a nonnegative integer, so that
\begin{align} \label{alr}
\Ord_{[0/1]}\!\left(f_{\mathbf a}^{(N)}\right)
=
\frac{S_1(a_0,\mathbf a)}{24} \in \Bbb Z.
\end{align}
Since the order of \(f_{\mathbf a}^{(N)}\) at infinity must also be a nonnegative integer, 
\[
\frac{N}{24}S_1(a_0,\mathbf a)-\frac{1}{2N}S_2(\mathbf a)\in\mathbb Z.
\]
By \eqref{alr}, it follows that
\(
\frac{1}{2N}S_2(\mathbf a)\in\mathbb Z.
\)
Note that, regardless of the parity of $N$, 
\(
N\gcd(2,N)\mid 2N.
\)
Therefore \(
S_2(\mathbf a)\equiv0\pmod{2N}
\) implies
\(
S_2(\mathbf a)\equiv0\pmod{N\gcd(2,N)}.
\)

For the converse, note that $j(N-j)$ is even when $N$ is odd. Thus, it suffices to assume $\frac{S_{1}(a_0,{\bf a})}{24}$ and $\frac{1}{2N}S_{2}({\bf a})$ are integral. The factor
\(
\eta(N\tau)^{S_1(a_0,\mathbf a)}
\)
has trivial multiplier when
\(
S_1(a_0,\mathbf a)\equiv 0\pmod{24}.
\) Therefore, the expressions \eqref{fak}, \eqref{kfp} may be used with Lemma~\ref{klf}(1) and (2), and Lemma \ref{eqv} (c.f., \cite[p.~68]{KL}) to show that $f_{{\bf a}}^{(N)}(\tau)$ is a modular form of weight~$a_{0}/2$ and level $\Gamma(N)$. The nonnegativity of the expression $v_{[a/c]}\!\left(f^{(N)}_{\mathbf a}\right)$ in~\eqref{ty} makes $f_{{\bf a}}^{(N)}(\tau)$ holomorphic. Since $\Gamma_{1}(N)$ is generated by $\Gamma(N)$ and $\left ( \begin{smallmatrix}
    1&1\\0&1
\end{smallmatrix}\right )$, condition~\eqref{wh} guarantees $f_{{\bf a}}^{(N)}(\tau)$ to be invariant under the action of $\left ( \begin{smallmatrix}
    1&1\\0&1
\end{smallmatrix} \right )$. Therefore, $f_{{\bf a}}^{(N)}(\tau)$ must be a holomorphic modular form of weight~$a_{0}/2$ and level $\Gamma_{1}(N)$.
\end{proof}

The following auxiliary lemma is straightforward to verify.

\begin{lemma}\label{lem:psolution}
Let $\Pi_{N}$ be defined by \eqref{PIN}. For each \(s\in\mathbb Z\), define
\(\mathbf a^{(s)}\in\mathbb Z^m\) with $m=\lfloor N/2\rfloor$ by
$$
\mathbf{a}^{(s)}=\begin{cases}
-3s\,\mathbf e_1
+
2s\,\mathbf e_2 = (-3s,2s)&\mbox{for $N=5$,}\\
    -3s\,\mathbf e_1
+
3s\,\mathbf e_2
-
s\,\mathbf e_3 = (-3s, 3s, -s, 0 \cdots, 0)&\mbox{for $N\geq6$.}
\end{cases}
$$
Then if \(s\equiv k\pmod{12}\), the vector \(\mathbf a^{(s)}=(a_{1}^{(s)},\ldots, a_{m}^{(s)})\) satisfies
\[
\sum_{j=1}^{m}a_j^{(s)}\equiv -k\pmod{12}, \qquad
\sum_{j=1}^{m}j^2a_j^{(s)}
\equiv0\pmod{N\gcd(2,N)}.
\]
Moreover, the coset \(\mathbf a^{(s)}+\Pi_N\) is independent of the choice of
representative \(s\) modulo \(12\).
\end{lemma}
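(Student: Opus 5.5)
The plan is to verify the two congruences by direct computation of $\alpha(\mathbf a^{(s)})$ and $\beta(\mathbf a^{(s)})$ in the notation \eqref{ab}, and then to deduce coset independence from the fact that $\mathbf a^{(s)}$ is linear in $s$.

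First I compute $\alpha(\mathbf a^{(s)})$. For $N=5$ it is $-3s+2s=-s$, and for $N\ge 6$ it is $-3s+3s-s=-s$. In both cases, if $s\equiv k \pmod{12}$, then $\alpha(\mathbf a^{(s)})=-s\equiv -k\pmod{12}$.

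Next I compute $\beta(\mathbf a^{(s)})$. For $N=5$ it is $-3s+4\cdot 2s=5s$, and $N\gcd(2,N)=5$ divides this. For $N\ge 6$ it is $-3s+4\cdot 3s-9s=0$, so the congruence modulo $N\gcd(2,N)$ holds trivially. Here $m\ge 3$, so the coordinate $\mathbf e_3$ exists.

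For independence, let $s\equiv s'\pmod{12}$. By linearity, $\mathbf a^{(s)}-\mathbf a^{(s')}=\mathbf a^{(s-s')}$, so it suffices to show that $\mathbf a^{(t)}\in\Pi_N$ whenever $12\mid t$. The computations above give $\alpha(\mathbf a^{(t)})=-t\equiv 0\pmod{12}$. They also give $\beta(\mathbf a^{(t)})\in\{5t,0\}$, which is $\equiv 0 \pmod{N\gcd(2,N)}$ in either case. Hence $\mathbf a^{(t)}\in\Pi_N$ by \eqref{PIN}, and the cosets $\mathbf a^{(s)}+\Pi_N$ and $\mathbf a^{(s')}+\Pi_N$ coincide.

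No step is a real obstacle. The only point needing care is to check the $\beta$ computation separately in the two cases, since the $N=5$ vector has no third coordinate.
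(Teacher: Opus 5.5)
Your proof is correct: $\alpha(\mathbf a^{(s)})=-s$ in both cases, $\beta(\mathbf a^{(s)})$ equals $5s$ for $N=5$ and $0$ for $N\ge 6$, and linearity in $s$ reduces coset independence to $\mathbf a^{(t)}\in\Pi_N$ for $12\mid t$. The paper gives no proof, only calling the lemma ``straightforward to verify,'' and your direct computation is exactly that routine check.
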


What follows is one of the main results of the present work characterizing the central object $L_{N}(k)$, the set of exponents $\mathbf{a}$ determining a holomorphic modular form $f_{\mathbf a}^{(N)}$.

\begin{theorem} \label{mt} Let $A_N$ and $B_N$ be defined as in Lemma \ref{lem:ANBN-equals-VN}, and let  $g(\mathfrak{c})$ be defined by~\eqref{gcusp} for a cusp $\mathfrak{c}$.
Suppose \(
P_{N}V_{N}Q_{N}=\operatorname{diag}(d_1,\dots,d_m)
\)
is the Smith normal form of the class matrix $V_N = A_N B_N$. Define $\mathbf a^{(k)}$ as in Lemma \ref{lem:psolution}. Then $|L_{N}(k)|$ is finite, with
\[
L_N(k)
=
\left\{
A_N^{-1}\mathbf w(\mathbf v)\ \middle|\
\begin{array}{l}
v_{\mathfrak c_r} \in \Bbb Z^{+}\cup \{0\},\quad 0\le r\le m\\[1mm] 
\sum_{r=0}^{m} |\Orb(\mathfrak c_{r})| v_{\mathfrak c_{r}} =
 \frac{k}{24}  \sum_{d\mid N} d\,\varphi(d)\,\varphi(N/d)\\[1mm]
\bigl(P_N(\mathbf w(\mathbf v)-A_N\mathbf a^{(k)})\bigr)_r \equiv 0 \pmod{d_r},
\ 1\le r\le m
\end{array}
\right\},   
\]
where 
\[
\mathbf v=(v_{\mathfrak c_0},\dots,v_{\mathfrak c_{m-1}})^T, \quad \quad \mathbf g=(g(\mathfrak c_0),\dots,g(\mathfrak c_{m-1}))^T, \quad \mathbf w(\mathbf v)=\mathbf v-\frac{k}{12}\mathbf g.
\]
\end{theorem}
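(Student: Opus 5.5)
The plan is to translate Lemma~\ref{thef} into linear-algebraic data on the cusp orbits. Fix $a_0=2k$. The first step is to check that the order function $v_{[a/c]}(f^{(N)}_{\mathbf a})$ in \eqref{ty} is constant on Galois orbits. By \eqref{galois}, the action fixes $g$ and the class of $a$ modulo $g$, so it suffices to evaluate at the representatives $\mathfrak c_0,\dots,\mathfrak c_m$. With $\beta_{\mathfrak c}$ as in \eqref{beta}, this gives
\[
v_{\mathfrak c_r}=\frac{k}{12}\,g(\mathfrak c_r)+\beta_{\mathfrak c_r}\cdot\mathbf a ,
\]
so the vector of the first $m$ orders is $\mathbf v=\frac{k}{12}\mathbf g+A_N\mathbf a$. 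Equivalently, $A_N\mathbf a=\mathbf w(\mathbf v)$. Since $A_N$ has rank $m$ by Lemma~\ref{lem:ANBN-equals-VN}, $\mathbf a=A_N^{-1}\mathbf w(\mathbf v)$ is determined by $\mathbf v$, and the map $\mathbf a\mapsto\mathbf v$ is injective.

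The second step is to read off the conditions. Holomorphy is $v_{\mathfrak c_r}\ge 0$ for all $0\le r\le m$, including the cusp at infinity. On $L_N(k)$ the $v_{\mathfrak c_r}$ are integers, because they are orders in local parameters of a genuine form on $\Gamma_1(N)$. The degree condition is valence: a nonzero weight-$k$ form on $\Gamma_1(N)$ has total cuspidal order $\frac{k}{12}[\mathrm{SL}_2(\mathbb Z):\pm\Gamma_1(N)]$, since $f$ has no zeros on $\mathbb H$. I would verify that this index equals $\frac{1}{2}\sum_{d\mid N}d\varphi(d)\varphi(N/d)$, giving the stated linear equation. Alternatively one can compute $\sum_r|\Orb(\mathfrak c_r)|v_{\mathfrak c_r}$ directly from the $B_2$ sums, using that the divisor of a modular unit has degree zero. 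Finiteness then follows: the $v_{\mathfrak c_r}$ are nonnegative integers with a fixed positively weighted sum, so there are finitely many, and $\mathbf v\mapsto\mathbf a$ is injective.

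The third step, which I expect to be the main obstacle, is identifying the congruence conditions \eqref{wh} with the Smith normal form conditions. By Lemma~\ref{eqv}, \eqref{wh} says $\alpha(\mathbf a)\equiv -k\pmod{12}$ and $\beta(\mathbf a)\equiv0\pmod{N\gcd(2,N)}$. By Lemma~\ref{lem:psolution}, this is exactly $\mathbf a\in\mathbf a^{(k)}+\Pi_N$, which by Lemma~\ref{prop:explicit-BN} is $\mathbf a^{(k)}+B_N\mathbb Z^m$. Applying $A_N$, this becomes $\mathbf w(\mathbf v)-A_N\mathbf a^{(k)}\in A_NB_N\mathbb Z^m=V_N\mathbb Z^m$.

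Since $P_N,Q_N$ are unimodular, $V_N\mathbb Z^m=P_N^{-1}\operatorname{diag}(d_r)\mathbb Z^m$. Hence membership is equivalent to $\bigl(P_N(\mathbf w(\mathbf v)-A_N\mathbf a^{(k)})\bigr)_r\equiv0\pmod{d_r}$ for each $r$. For this equivalence to be meaningful I need $\mathbf w(\mathbf v)-A_N\mathbf a^{(k)}\in\mathbb Z^m$ a priori. I would argue that the $v_{\mathfrak c_r}$ being integers forces this, or else interpret the congruence as including integrality. I would also check $\det V_N\ne0$, which holds since $A_N$ is invertible and $B_N$ is a basis matrix, so all $d_r\neq0$.

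For the converse, take $\mathbf v$ satisfying all three conditions and set $\mathbf a=A_N^{-1}\mathbf w(\mathbf v)$. The congruences give $\mathbf a\in\mathbf a^{(k)}+\Pi_N\subset\mathbb Z^m$, which yields \eqref{wh}. The orders at $\mathfrak c_0,\dots,\mathfrak c_{m-1}$ are then the given $v_{\mathfrak c_r}\ge0$. For the last cusp, the order at infinity is forced by the degree equation, since the total degree identity holds for the genuine form $f^{(N)}_{\mathbf a}$. So it equals the prescribed $v_{\mathfrak c_m}\ge0$, and Lemma~\ref{thef} gives $\mathbf a\in L_N(k)$.
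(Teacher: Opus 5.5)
Your proposal is correct and follows essentially the same route as the paper. You identify \eqref{wh} with the coset $\mathbf a^{(k)}+\Pi_N=\mathbf a^{(k)}+B_N\mathbb Z^m$, apply $A_N$ to land in $V_N\mathbb Z^m$ and read off the Smith normal form conditions, and in the converse recover the order at $\mathfrak c_m=[i\infty]$ from the valence formula for the weakly modular $f^{(N)}_{\mathbf a}$. Your integrality concern resolves as you suspect, since $\frac{k}{12}\mathbf g+A_N\mathbf a^{(k)}$ is the vector of cusp orders of the weakly holomorphic form $f^{(N)}_{\mathbf a^{(k)}}$ and hence lies in $\mathbb Z^m$.
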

\begin{proof}
Define
\[
\mathcal C_N(k)
=
\left\{
\mathbf a\in\mathbb Z^m:
\sum_{j=1}^{m}a_j\equiv -k\pmod{12},
\quad
\sum_{j=1}^{m}j^2a_j\equiv0\pmod{M}
\right\},
\]
where $M=N\gcd(2,N)$.
By Lemma~\ref{lem:psolution}, the vector \(\mathbf a^{(k)}\) lies in
\(\mathcal C_N(k)\). We claim that
\[
\mathcal C_N(k)=\mathbf a^{(k)}+\Pi_{N},
\]
where $\Pi_{N}$ is defined as in~\eqref{PIN}.
If \(\mathbf a\in\mathcal C_N(k)\), then
\[
\mathbf a-\mathbf a^{(k)}\in\Pi_{N}.
\]
Conversely, if \(\mathbf b\in\Pi_{N}\), then
\(
\mathbf a^{(k)}+\mathbf b
\)
satisfies the two affine congruences defining \(\mathcal C_N(k)\). Hence
\[
\mathcal C_N(k)=\mathbf a^{(k)}+\Pi_{N}.
\]

By Lemma~\ref{prop:explicit-BN}, the columns of \(B_N\) form a
\(\mathbb Z\)-basis for \(\Pi_{N}\). Hence
\[
\Pi_{N}=B_N\mathbb Z^m.
\]
Therefore
\[
A_N\Pi_{N}=A_NB_N\mathbb Z^m=V_N\mathbb Z^m.
\]

We now prove the stated characterization of $L_{N}(k)$. First, suppose
\(
\mathbf a\in L_N(k).
\)
 Since \(f_{\mathbf a}^{(N)}\) is
holomorphic, we have
\[
v_{\mathfrak c_r}\ge0
\quad\mbox{for}\quad 
0\le r\le m.
\]
The valence formula implies
\begin{align} \label{valence}
\sum_{r=0}^{m}
|\operatorname{Orb}(\mathfrak c_r)|v_{\mathfrak c_r}
 =
 \frac{k}{12}[{\rm SL}_2(\Bbb Z):\pm{\Gamma_1(N)}]= 
\frac{k}{24}
\sum_{d\mid N}d\varphi(d)\varphi(N/d).
\end{align}
Also,
\[
\mathbf w(\mathbf v)=A_N\mathbf a.
\]
By Lemmas \ref{eqv} and \ref{thef},  \(\mathbf a\in\mathcal C_N(k)=\mathbf a^{(k)}+\Pi_{N}\), so there exists
\(\mathbf b\in\Pi_{N}\) such that
\(
\mathbf a=\mathbf a^{(k)}+\mathbf b.
\)
Thus
\[
\mathbf w(\mathbf v)-A_N\mathbf a^{(k)}
=
A_N\mathbf b.
\]
Since \(A_N\mathbf b\in A_N\Pi_{N}=V_N\mathbb Z^m\), we have
\[
\mathbf w(\mathbf v)-A_N\mathbf a^{(k)}
\in V_N\mathbb Z^m.
\]
Applying \(P\) and using the Smith normal form
\[
P_{N} V_N Q_{N}=\operatorname{diag}(d_1,\dots,d_m),
\]
we get
\[
P_{N}\left(\mathbf w(\mathbf v)-A_N\mathbf a^{(k)}\right)
\in
\operatorname{diag}(d_1,\dots,d_m)\mathbb Z^m.
\]
Hence every element of \(L_N(k)\) is a vector satisfying the stated
conditions.

Conversely, suppose that \(\mathbf v\) is a nonnegative integer vector whose component values, together with the nonnegative integer \(v_{\mathfrak c_m}\), satisfy the valence equation \eqref{valence} and
\[
P_{N}\left(\mathbf w(\mathbf v)-A_N\mathbf a^{(k)}\right)
\in
\operatorname{diag}(d_1,\dots,d_m)\mathbb Z^m.
\]
This is equivalent to
\[
\mathbf w(\mathbf v)-A_N\mathbf a^{(k)}
\in V_N\mathbb Z^m.
\]
Therefore there exists \(\mathbf n\in\mathbb Z^m\) such that
\[
\mathbf w(\mathbf v)-A_N\mathbf a^{(k)}
=
V_N\mathbf n.
\]
Since \(V_N=A_NB_N\), we get
\[
\mathbf w(\mathbf v)
=
A_N\mathbf a^{(k)}+A_NB_N\mathbf n
=
A_N(\mathbf a^{(k)}+B_N\mathbf n).
\]
Hence
\[
A_N^{-1}\mathbf w(\mathbf v)
=
\mathbf a^{(k)}+B_N\mathbf n.
\]
Because \(B_N\mathbf n\in\Pi_{N}\), we have
\[
A_N^{-1}\mathbf w(\mathbf v)\in
\mathbf a^{(k)}+\Pi_{N}
=
\mathcal C_N(k).
\]
Thus the exponent vector
\(
\mathbf a=A_N^{-1}\mathbf w(\mathbf v)
\)
satisfies the required congruences for the fixed value \(a_0=2k\). Form the product \(f_{\mathbf a}^{(N)}\) using the exponent vector $\mathbf a$, so, by Lemmas \ref{eqv} and \ref{thef}, $f_{\mathbf a}^{(N)}$ is weakly modular for $\Gamma_1(N)$, and hence has integral orders at cusps and satisfies the valence equation. Note that the first \(m\) cusp orders of \(f_{\mathbf a}^{(N)}\) agree with the integers
\(v_{\mathfrak c_0},\dots,v_{\mathfrak c_{m-1}}\), because
\[
(\Ord_{\mathfrak c_{0}} (f_{\mathbf a}^{(N)}), \ldots,\Ord_{\mathfrak c_{m-1}} (f_{\mathbf a}^{(N)}))  =\frac{k}{12}\mathbf g+A_N\mathbf a
=
\frac{k}{12}\mathbf g+\mathbf w(\mathbf v)
=
\mathbf v.
\]
The final cusp order at \(\mathfrak c_m=[i\infty]\) is determined by the
valence formula and equals \(v_{\mathfrak c_m}\). Let
\(
x=\Ord_{\mathfrak c_m}(f_{\mathbf a}^{(N)})
\)
be the cusp order at $\mathfrak c_{m}$. Since \(f_{\mathbf a}^{(N)}\) is weakly modular,
the valence formula gives
\begin{align} \label{vala}
\sum_{r=0}^{m-1}|\Orb(\mathfrak c_r)|v_{\mathfrak c_r}
+
|\Orb(\mathfrak c_m)|x
=
\frac{k}{24}\sum_{d\mid N}d\varphi(d)\varphi(N/d)
.\end{align}
On the other hand, by assumption, if we replace $x$ with $v_{\mathfrak c_{m}}$ in \eqref{vala}, the statement remains true. Subtracting the two identities yields $|\Orb(\mathfrak c_m)|(x-v_{\mathfrak c_m})=0.$ Since \(|\Orb(\mathfrak c_m)|=1\), it follows that $x=v_{\mathfrak c_m}$.  Therefore all cusp orders of
\(f_{\mathbf a}^{(N)}\) are nonnegative. Hence
\[
\mathbf a=A_N^{-1}\mathbf w(\mathbf v)\in L_N(k).
\]
This proves the equality of the two sets. The final conclusion in the theorem follows from the valence formula together with the nonnegativity of $v_{\mathfrak{c}}$.
\end{proof}

\section{Computing $|L_{N}(k)|$}

In this section, using the characterization of $L_{N}(k)$ given in Theorem~\ref{mt}, we develop effective strategies for deducing general formulas for the cardinality $|L_{N}(k)|$.

\subsection{Construction of polytope count}

Based on the characterization of $L_{N}(k)$ given by Theorem~\ref{mt}, one can immediately formulate $|L_{N}(k)|$ in terms of polytope counting functions.
\begin{theorem}
For $N\geq5$ and $k\geq0$, let $L_{N}(k)$ be defined as in~\eqref{LNA}. Let $P_{N}V_{N}Q_{N} = D_{N}:={\rm diag}(d_{1},\ldots,d_{m})$ be a Smith normal form of the divisor class matrix $V_{N}$, with congruences written \begin{align} \label{smith3}
P_N\mathbf v+\boldsymbol\alpha_N k
\equiv \mathbf 0 \pmod{D_N}, \qquad \boldsymbol\alpha_N
=
-P_N\left(\frac{1}{12}\mathbf g+A_N\mathbf a^{(1)}\right)
=
(\alpha_1,\dots,\alpha_m)^T . 
\end{align} Then one has
\begin{align}\label{thel}
|L_{N}(k)|
=
\frac{1}{d_1\cdots d_m}
\sum_{j_1=0}^{d_1-1}\cdots\sum_{j_m=0}^{d_m-1}
\xi(\mathbf j)^k\,W_{\mathbf j}(kR_{N}),
\end{align}
where
\begin{align} \label{xi}
\xi(\mathbf j)
=
\prod_{i=1}^{m}\omega_i^{j_i\alpha_i}, \qquad 
\mathbf j=(j_1,\dots,j_m),\ 
0\le j_i\le d_i-1, \qquad \omega_i=e^{2\pi i/d_i},
\end{align}
\begin{align} \label{chi}
\chi_r(\mathbf j)
=
\prod_{i=1}^{m}
\omega_i^{j_i(P_N)_{i,r+1}}, \qquad 0\le r\le m-1, \qquad \chi_m(\mathbf j)=1,
\end{align}
and 
\begin{align}\label{thew-class}
W_{\mathbf j}(n)
=
\sum_{\substack{
(v_{\mathfrak c_0},\dots,v_{\mathfrak c_m})\in\mathbb Z_{\ge0}^{m+1}\\
\sum_{r=0}^{m}\mu_r v_{\mathfrak c_r}=n
}}
\prod_{r=0}^{m}
\chi_r(\mathbf j)^{v_{\mathfrak c_r}}, \quad \mu_i=
\begin{cases}
\dfrac{\varphi(N)}{2}, & (g_i,a_i)=(1,1),\\[2mm]
\dfrac{\varphi(N/2)}{2}, & (g_i,a_i)=(2,1)\ \text{if }2\mid N,\\[2mm]
\varphi(N/g_i), & g_i>2.
\end{cases}
\end{align}
\end{theorem}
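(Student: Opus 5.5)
By Theorem~\ref{mt}, I plan to turn $|L_N(k)|$ into a count of nonnegative integer vectors $(v_{\mathfrak c_0},\dots,v_{\mathfrak c_m})$ satisfying one linear equation and a system of congruences. Then I will detect the congruences with additive characters of $\bigoplus_i\mathbb Z/d_i\mathbb Z$. I read $R_N$ as $\frac1{24}\sum_{d\mid N}d\varphi(d)\varphi(N/d)$, so that the valence condition in Theorem~\ref{mt} reads $\sum_{r=0}^m\mu_r v_{\mathfrak c_r}=kR_N$.

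\emph{Step 1 (bijection).} The map $\mathbf v\mapsto A_N^{-1}\mathbf w(\mathbf v)$ is injective, because $A_N$ has rank $m$ by Lemma~\ref{lem:ANBN-equals-VN}. The last coordinate $v_{\mathfrak c_m}$ is determined by the valence equation, since $\mu_m=1$. Hence $|L_N(k)|$ equals the number of $(v_{\mathfrak c_0},\dots,v_{\mathfrak c_m})\in\mathbb Z_{\ge0}^{m+1}$ with $\sum_r\mu_r v_{\mathfrak c_r}=kR_N$ that satisfy the congruences of Theorem~\ref{mt}.

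\emph{Step 2 (rewrite the congruence).} The vector $\mathbf a^{(s)}$ of Lemma~\ref{lem:psolution} is linear in $s$, so $\mathbf a^{(k)}=k\,\mathbf a^{(1)}$. Therefore
\[
P_N\bigl(\mathbf w(\mathbf v)-A_N\mathbf a^{(k)}\bigr)=P_N\mathbf v-kP_N\Bigl(\tfrac1{12}\mathbf g+A_N\mathbf a^{(1)}\Bigr)=P_N\mathbf v+k\boldsymbol\alpha_N ,
\]
which is \eqref{smith3}. I must check that $\boldsymbol\alpha_N\in\mathbb Z^m$, so that $\omega_i^{j_i\alpha_i}$ depends only on the residues involved. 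For this, note that $\frac1{12}\mathbf g+A_N\mathbf a^{(1)}$ is the vector of cusp orders $v_{\mathfrak c_r}$ of $f^{(N)}_{\mathbf a^{(1)}}$ with $a_0=2$, by \eqref{ty}. By Lemma~\ref{lem:psolution}, $\mathbf a^{(1)}$ satisfies the congruences \eqref{wh}. Hence, by Lemma~\ref{thef} and its proof, this product is weakly modular (meromorphic at the cusps) on $\Gamma_1(N)$, and its cusp orders are integers. I expect this integrality step to be the main point needing care: holomorphy is not available, and the integrality must come from weak modularity alone.

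\emph{Step 3 (orthogonality).} For integers $x$ I use $\mathbf 1_{d\mid x}=\frac1d\sum_{j=0}^{d-1}\omega^{jx}$ with $\omega=e^{2\pi i/d}$. Taking the product over $i=1,\dots,m$, the congruence indicator becomes
\[
\frac1{d_1\cdots d_m}\sum_{\mathbf j}\prod_i\omega_i^{j_i(P_N\mathbf v)_i}\,\omega_i^{j_i\alpha_i k}.
\]
The second factor is $\xi(\mathbf j)^k$. Expanding $(P_N\mathbf v)_i=\sum_{r=0}^{m-1}(P_N)_{i,r+1}v_{\mathfrak c_r}$ gives $\prod_i\omega_i^{j_i(P_N\mathbf v)_i}=\prod_{r=0}^{m-1}\chi_r(\mathbf j)^{v_{\mathfrak c_r}}$. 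Since $v_{\mathfrak c_m}$ does not enter the congruence, setting $\chi_m=1$ is consistent. Summing over the admissible $\mathbf v$ and interchanging the two finite sums produces exactly $W_{\mathbf j}(kR_N)$, which gives \eqref{thel}.

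\emph{Step 4 (orbit sizes $\mu_r$).} By \eqref{galois}, the orbit of type $(g,a)$ is parametrised by $u\in(\mathbb Z/(N/g)\mathbb Z)^\times$, so it has at most $\varphi(N/g)$ elements. Two values $u$ and $u'$ give the same cusp exactly when there is a sign $\epsilon=\pm1$ with $u'\equiv\epsilon u$ and $a\equiv\epsilon a\pmod g$. When $g>2$ this forces $\epsilon=1$, so $\mu=\varphi(N/g)$. When $g\in\{1,2\}$ the sign is absorbed in $a$, so $u$ and $-u$ are identified, and $-u\neq u$ because $N/g>2$ for $N\ge5$; this gives $\mu=\varphi(N/g)/2$. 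I would confirm this with the standard criterion for $\Gamma_1(N)$-equivalence of cusps. As a sanity check I would verify that $\sum_r\mu_r$ equals the total number of cusps of $X_1(N)$.
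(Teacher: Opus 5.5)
Your proposal is correct and follows essentially the same route as the paper. You reduce via Theorem~\ref{mt} to counting nonnegative cusp-order vectors on the valence hyperplane, rewrite the congruence as $P_N\mathbf v+k\boldsymbol\alpha_N\equiv\mathbf 0\pmod{D_N}$ using $\mathbf a^{(k)}=k\,\mathbf a^{(1)}$, and get integrality of $\boldsymbol\alpha_N$, as the paper does, from weak modularity of $f^{(N)}_{\mathbf a^{(1)}}$ with $a_0=2$; you then detect the congruences with roots-of-unity filters and interchange the finite sums. Your Step~4 justifies the orbit sizes $\mu_i$ via the $\Gamma_1(N)$-equivalence criterion, which is more explicit than the paper's bare appeal to the orbit discussion around Lemma~\ref{lem:types}.
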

\begin{proof}
For the list of \((g,a)\) with $g\mid N,\ a\in (\mathbb Z/g\mathbb Z)^\times/\{\pm1\}$, 
fix an ordering
\(
\{(g_i,a_i):0\le i\le m\},
\) with $g_{m} = N$, and let $\mathfrak c_{i}$ be the corresponding cusp class.
Let \(\mu_i= |\Orb (\mathfrak c_{i})|.
\)
Then, based on the discussion in the proof of Lemma \ref{lem:types}, the orbit sizes are given by $\mu_i$ as defined in \eqref{thew-class}.
With this convention, the valence formula becomes
\begin{align} \label{val1}
\sum_{r=0}^{m}\mu_r\,v_{\mathfrak c_r}=k\,R_N, \qquad  R_N=\frac{1}{24}\sum_{d\mid N} d\,\varphi(d)\,\varphi(N/d).
\end{align}

Let $\mathbf v = (v_{\mathfrak c_0},\dots,v_{\mathfrak c_{m-1}})$, with the cusp orders ordered so that
\(
\mathfrak c_m=[i\infty]
\).
Let
\begin{align} \label{smith1}
P_NV_NQ_N=D_N=\operatorname{diag}(d_1,\dots,d_m),
\qquad
P_N,Q_N\in \operatorname{GL}_m(\mathbb Z),
\end{align}
be a Smith normal form of the class matrix \(V_N\). The class-matrix congruences may be written
\begin{align} \label{smith2}
P_N\mathbf v+\boldsymbol\alpha_N k
\equiv \mathbf 0 \pmod{D_N}, 
\end{align}
where
\(
\boldsymbol\alpha_N=(\alpha_1,\dots,\alpha_m)^T\in\mathbb Z^m
\) 
defined by \eqref{smith3} records the part of the congruence dependent on $k$ and 
is integral since  $P_{N}$ is unimodular and \(f_{\mathbf a^{(1)}}^{(N)}\) with \(a_0=2\) is weakly holomorphic. 
For
\(
\mathbf j=(j_1,\dots,j_m),
0\le j_i\le d_i-1,
\)
define $\xi(\mathbf j)$ and $\chi_r(\mathbf j)$ as in \eqref{xi}, \eqref{chi}. By applying the indicator function for the $i$th congruence,
\[
\frac1{d_i}\sum_{j_i=0}^{d_i-1}
\omega_i^{j_i n}
=
\begin{cases}
1, & n\equiv0\pmod{d_i},\\
0, & n\not\equiv0\pmod{d_i}
\end{cases}, \qquad \omega_i=e^{2\pi i/d_i},
\]
we obtain, for a fixed vector
\(
(v_{\mathfrak c_0},\dots,v_{\mathfrak c_m})\in\mathbb Z_{\ge0}^{m+1},
\)
the indicator that all congruences hold is
\begin{align} \label{allc}
\frac{1}{d_1\cdots d_m}
\sum_{j_1=0}^{d_1-1}\cdots\sum_{j_m=0}^{d_m-1}
\prod_{i=1}^{m}
\omega_i^{j_i\bigl((P_N\mathbf v)_i+\alpha_i k\bigr)}.
\end{align}
Factors in the innermost product of \eqref{allc} may be collected to get
\begin{align*}
\prod_{i=1}^{m}
\omega_i^{j_i\bigl((P_N\mathbf v)_i+\alpha_i k\bigr)}
=
\left(
\prod_{i=1}^{m}\omega_i^{j_i\alpha_i}
\right)^k
\prod_{r=0}^{m-1}
\left(
\prod_{i=1}^{m}\omega_i^{j_i(P_N)_{i,r+1}}
\right)^{v_{\mathfrak c_r}} =\xi(\mathbf j)^k
\prod_{r=0}^{m}
\chi_r(\mathbf j)^{v_{\mathfrak c_r}}.
\end{align*}
Applying the valence formula in the form \eqref{val1} to \eqref{allc} yields
\begin{align} \label{lo}
|L_N(k)|
&=
\sum_{\substack{
(v_{\mathfrak c_0},\dots,v_{\mathfrak c_m})\in\mathbb Z_{\ge0}^{m+1}\\
\sum_{r=0}^{m}\mu_r v_{\mathfrak c_r}=kR_{N}
}}
\frac{1}{d_1\cdots d_m}
\sum_{j_1=0}^{d_1-1}\cdots\sum_{j_m=0}^{d_m-1}
\xi(\mathbf j)^k
\prod_{r=0}^{m}
\chi_r(\mathbf j)^{v_{\mathfrak c_r}}.
\end{align}
Interchanging the finite sums in \eqref{lo} gives \eqref{thel}.
\end{proof}

\subsection{Decomposition}
It is well known that polytopes may be enumerated with quasipolynomials. A quasipolynomial of degree $d$ and period $E$ is a function of the form $$Q(k) = c_{d}(k)k^{d} + \cdots + c_{0}(k),$$ where the coefficients $c_{i}(k)$ are periodic functions of period $E$. Formula \eqref{thel} provides an expansion for the quasipolynomial for the count $|L_N(k)|$ but requires substantial algebra to simplify in practice for general levels $N$ as the number of cusps and the size of the class group increase. Write
\[
|L_N(k)|
=
c_m(k)k^m+c_{m-1}(k)k^{m-1}+\cdots+c_0(k),
\]
where each \(c_i(k)\) is periodic in $k$ with period \(E\). We will decompose the quasipolynomial into a main polynomial term and a lower-order quasipolynomial term whose coefficient function depends on the residue class of $k\pmod E$. 
Define
\begin{align} \label{deft}
T_N(k)
=
\sum_{i=0}^{m}\overline c_i k^i, \qquad \overline c_i
=
\frac1E\sum_{r=0}^{E-1}c_i(r),
\end{align}
and 
\begin{align} \label{defe}
\mathcal E_{N}(k)
=\begin{cases}
\sum_{i=0}^{m}
\bigl(c_i(0)-\overline c_i\bigr)k^i&\mbox{if $k\equiv 0\pmod{E}$,}\\
\qquad\qquad\vdots&\vdots\\
\sum_{i=0}^{m}\bigl(c_i(E-1)-\overline c_i\bigr)k^i&\mbox{if $k\equiv E-1\pmod{E}$.}
\end{cases}
\end{align}
Clearly, one has for any integer~$k$,
$$
|L_{N}(k)|=T_{N}(k)+\mathcal{E}_{N}(k).
$$
Under this convention, $T_N(k) \in \Bbb Q[k]$ is independent of the residue class of $k$ modulo $E$, and $\mathcal E_{N}(k)$ is a piecewise polynomial function.

Theorem \ref{lt} subsequently shows that $c_{m}(0)=\cdots=c_{m}(E-1)$, and so the degree of $T_{N}(k)$ is exactly $m=\lfloor N/2\rfloor$. The leading coefficient is the leading term of the quasipolynomial for $|L_{N}(k)|$ and is expressed in terms of the index of $\pm\Gamma_1(N)$ in ${\rm SL}_2(\Bbb Z)$, the sizes of the Galois orbits, and the order of the class group. Since the leading coefficient in the quasipolynomial for $|L_N(k)|$ is constant,  $$\deg \mathcal E_{N} < \deg T_N =m.$$ Here $\deg \mathcal{E}_{N}$ is the highest exponent among all the monomial terms in the constituent polynomials of the quasipolynomial $\mathcal{E}_{N}$. Our proof of Theorem \ref{lt} requires a fundamental property of rational polytopes due to Ehrhart \cite{ehr} that characterizes the counting function for a polytope as a quasipolynomial and determines its leading coefficient (c.f., \cite[\S 5.4]{beckrobins}, \cite[\S 4]{stanleyEC1}). 

\begin{lemma}
\label{lem:ehr}
Let \(V\) be a real vector space of dimension \(d\), let
\(\Lambda\subset V\) be a rank-\(d\) lattice, and let \(\mathcal{P}\subset V\) be a
\(d\)-dimensional rational polytope with respect to \(\Lambda\). Then
\(
|k\mathcal{P}\cap \Lambda|
\) is a quasipolynomial in \(k\) of degree
\(d\) with constant leading coefficient
\(
\operatorname{vol}_{\Lambda}(\mathcal{P})
\).
\end{lemma}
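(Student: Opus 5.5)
The plan is to prove the statement in two parts: first the quasipolynomiality of $|k\mathcal P\cap\Lambda|$, then the identification of its leading coefficient. Since $\mathcal P$ is rational, fix a $\mathbb Z$-basis of $\Lambda$ to identify $V\cong\mathbb R^d$ and $\Lambda\cong\mathbb Z^d$. Then $\mathcal P$ becomes a rational polytope in the usual sense, with vertices in $\mathbb Q^d$, and $\operatorname{vol}_\Lambda$ becomes Lebesgue measure normalized so that the unit cube has volume $1$. Let $E$ be the least common multiple of the denominators of the vertex coordinates, so that $E\mathcal P$ is a lattice polytope.

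\emph{Quasipolynomiality.} I would follow the standard generating-function route of \cite[\S 3--4]{beckrobins}.

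1. Triangulate $\mathcal P$ into rational simplices without new vertices. By inclusion--exclusion over the relatively open faces of the triangulation, it suffices to treat the relative interior of a single rational simplex $\Delta$ with vertices $v_0,\dots,v_e$.

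2. Cone over $\Delta\times\{1\}$ in $\mathbb R^{d+1}$. Each vertex gives a lattice ray generator $w_i=(Ev_i,E)$. Every lattice point of the cone is uniquely $p+\sum n_iw_i$ with $n_i\in\mathbb Z_{\ge0}$ and $p$ in the half-open fundamental parallelepiped $\Pi$.

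3. The generating function is therefore
\[
\sum_{k\ge0}|k\Delta^\circ\cap\mathbb Z^d|\,z^k=\frac{h(z)}{(1-z^E)^{e+1}},
\]
where $h$ is a polynomial of degree less than $E(e+1)$ obtained by summing $z^{\text{last coord}(p)}$ over $p\in\Pi$.

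4. Partial fractions in $z$ with poles only at $E$-th roots of unity of order at most $e+1$ show that the coefficients form a quasipolynomial of period dividing $E$ and degree at most $e\le d$.

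5. Summing over the faces of the triangulation gives a quasipolynomial of degree at most $d$ for $\mathcal P$.

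\emph{Leading coefficient.} Write the count as $c_d(k)k^d+\dots$. I would show that for each fixed residue $r\bmod E$, $c_d(r)=\lim_{k\to\infty,\,k\equiv r}|k\mathcal P\cap\mathbb Z^d|/k^d$. This holds because the lower coefficients are bounded periodic functions. The remaining step is the Riemann-sum limit
\[
\frac{|k\mathcal P\cap\mathbb Z^d|}{k^d}=\frac1{k^d}\Big|\mathcal P\cap\tfrac1k\mathbb Z^d\Big|\longrightarrow\operatorname{vol}(\mathcal P).
\]
It holds since $\mathcal P$ is Jordan measurable (it is convex and bounded): cover it by cubes of side $1/k$ centered at points of $\frac1k\mathbb Z^d$, and note that the cubes meeting $\partial\mathcal P$ number $O(k^{d-1})$. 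The limit is independent of $r$, so $c_d$ is constant and equals $\operatorname{vol}_\Lambda(\mathcal P)$. This value is positive because $\mathcal P$ is full-dimensional, so the degree is exactly $d$.

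\emph{Main obstacle.} The delicate step is justifying the bookkeeping in steps 1--5: the inclusion--exclusion over open faces, and checking that the denominator exponent for the $e$-dimensional faces yields degree exactly $e$. Since only the leading coefficient matters for the paper's application, the plan leans on the Riemann-sum argument for the top term. For the quasipolynomial structure it cites Ehrhart's theorem \cite{ehr} as stated in \cite[Thm.~3.23]{beckrobins} rather than redoing the partial-fraction analysis in full.
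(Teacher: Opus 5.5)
Your proposal is correct. The paper gives no proof of this lemma; it only cites Ehrhart and the treatments in Beck--Robins and Stanley, and your sketch is the standard argument behind that citation: the cone/fundamental-parallelepiped generating function gives quasipolynomiality, and the Riemann-sum limit along each residue class $k\equiv r \pmod{E}$ shows $c_d$ is constant and equal to $\operatorname{vol}_\Lambda(\mathcal P)$.

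There is one small slip. For a relatively open simplex the parallelepiped is $\{\sum_i\lambda_i w_i : 0<\lambda_i\le 1\}$, so $h$ can have degree exactly $E(e+1)$ (for example, the lattice point $\sum_i w_i$ lies in it), and the face-wise counts are then quasipolynomials only for $k\ge 1$. This does not affect the leading coefficient, which is all the paper uses.
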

We denote by \(\operatorname{vol}_{\Lambda}\) the volume of a polytope measured relative to the
lattice \(\Lambda\), so that
 \(\operatorname{vol}_{\Lambda}(P) \)
 is the volume of $P$ divided by the volume of
\(\Pi_{\Lambda}\), where $\Pi_{\Lambda}$ is a fundamental parallelepiped for $\Lambda$.

\begin{theorem} \label{lt}
Let \(N\ge5\) and $k\geq0$, and \(m=\lfloor N/2\rfloor\). Then
\[
|L_N(k)| = \frac{R_N^m}
{m!\,\mu_0\mu_1\cdots\mu_m\,|\det V_N|} k^{m}+O(k^{m-1}),
\]
where as before, $V_{N}$ is the class matrix for the $\mathbb{Q}$-rational cuspidal divisor class group of $\Gamma_{1}(N)$ defined by~\eqref{clV}, $\mu_{i}=|{\rm Orb}(\mathfrak{c}_{i})|$, and 
$
R_N= 
\frac{1}{24}\sum_{d\mid N} d\,\varphi(d)\,\varphi(N/d).
$
\end{theorem}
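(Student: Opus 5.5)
We use Theorem~\ref{mt} to realize $L_N(k)$ as the lattice points of a dilated rational polytope, and then apply Lemma~\ref{lem:ehr} after computing the relevant normalized volume. Theorem~\ref{mt} says that $\mathbf a\in L_N(k)$ exactly when the vector of cusp orders $(v_{\mathfrak c_0},\dots,v_{\mathfrak c_m})$ lies in the simplex
\[
\Delta_k=\Bigl\{\mathbf x\in\mathbb R_{\ge0}^{m+1}:\ \sum_{r=0}^m\mu_r x_r=kR_N\Bigr\}
\]
and the truncated vector $\mathbf v=(x_0,\dots,x_{m-1})$ lies in the coset
\[
\tfrac{k}{12}\mathbf g+A_N\mathbf a^{(k)}+V_N\mathbb Z^m.
\]
The map $\mathbf a\mapsto \mathbf v=\frac{k}{12}\mathbf g+A_N\mathbf a$ is injective, since $A_N$ has rank $m$ by Lemma~\ref{lem:ANBN-equals-VN}. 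Also $x_m$ is determined by $\mathbf v$, because $\mu_m=1$. So we project $\Delta_k$ onto its first $m$ coordinates and obtain
\[
k\mathcal P,\qquad \mathcal P=\Bigl\{\mathbf y\in\mathbb R_{\ge 0}^m:\ \sum_{r=0}^{m-1}\mu_r y_r\le R_N\Bigr\}.
\]
This gives
\[
|L_N(k)|=\bigl|k\mathcal P\cap(\mathbf t_k+V_N\mathbb Z^m)\bigr|,\qquad \mathbf t_k=\tfrac{k}{12}\mathbf g+A_N\mathbf a^{(k)}.
\]

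The translation vector $\mathbf t_k$ depends on $k$, so we need to handle it before invoking Ehrhart. Lemma~\ref{lem:psolution} shows that $\mathbf a^{(k)}+\Pi_N$ depends only on $k\bmod 12$. Hence, for $k$ in a fixed residue class modulo $12$, the coset $\mathbf t_k+V_N\mathbb Z^m$ is $\frac{k}{12}\mathbf g+\mathbf t_{r}'+V_N\mathbb Z^m$ with $\mathbf t'_r$ fixed. This expression is affine-linear in $k$.

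We then pass to a common dilation. Write $k=12\ell+r$ and choose a homogenizing shift. Concretely, the coset contains the integer vector $\mathbf v$ determined by any actual element of $L_N(k)$, so it is a translate of the sublattice $\Lambda=V_N\mathbb Z^m\subset\mathbb Z^m$ by a point that moves linearly in $k$. Counting points of a fixed lattice $\Lambda$ in $k\mathcal P-\mathbf s_k$ with $\mathbf s_k$ affine in $k$ is, on each residue class, the Ehrhart count of a rational polytope in one higher dimension sliced at height $k$. Equivalently, we may apply Lemma~\ref{lem:ehr} to the translated polytope $\mathcal P-\mathbf s$, where $\mathbf s_k=k\mathbf s+\mathbf s_0$, and absorb the bounded shift $\mathbf s_0$. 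A bounded translation changes the count by $O(k^{m-1})$, since only lattice points within bounded distance of the boundary are affected. Either way, the leading coefficient is the same on every residue class and equals $\operatorname{vol}_\Lambda(\mathcal P)$. We expect this translation bookkeeping to be the main technical obstacle, though only a soft estimate is needed: the boundary of $k\mathcal P$ has $(m-1)$-dimensional measure $O(k^{m-1})$, and the standard lattice-point estimate then gives
\[
|k\mathcal P\cap(\mathbf t+\Lambda)|=\operatorname{vol}_\Lambda(k\mathcal P)+O(k^{m-1})
\]
uniformly in $\mathbf t$.

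Finally we compute the volume. $\mathcal P$ is a simplex with vertices $0$ and $(R_N/\mu_r)\mathbf e_{r+1}$, so its Euclidean volume is
\[
\frac{R_N^m}{m!\,\mu_0\cdots\mu_{m-1}}.
\]
The covolume of $\Lambda=V_N\mathbb Z^m$ is $|\det V_N|$, which is nonzero because $\Cl^c_{\mathbb Q}(X)$ is finite. Since $\mu_m=1$, we get
\[
\operatorname{vol}_\Lambda(k\mathcal P)=\frac{R_N^m}{m!\,\mu_0\cdots\mu_m\,|\det V_N|}\,k^m,
\]
which is the stated leading term.
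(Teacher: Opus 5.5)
Your proof is correct. Its skeleton matches the paper's: reduce via Theorem~\ref{mt} to counting points of $k\mathcal P$ in the coset $\mathbf t_k+\Lambda$ with $\Lambda=V_N\mathbb Z^m$, then divide the simplex volume by $|\det V_N|$. The two arguments differ only in how the $k$-dependent translation is handled.

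The paper uses that the vectors of Lemma~\ref{lem:psolution} are linear in $s$. So $\mathbf a^{(k)}=k\,\mathbf a^{(1)}$, and $\mathbf t_k=k\mathbf t_1$ exactly, with $\mathbf t_1=\frac1{12}\mathbf g+A_N\mathbf a^{(1)}$. The count is then literally $|k(\mathcal P-\mathbf t_1)\cap\Lambda|$, an Ehrhart count of a rational polytope. Lemma~\ref{lem:ehr} and translation invariance of volume finish the proof. Your detour through residues modulo $12$ and the absorption of a bounded shift $\mathbf s_0$ can therefore be dropped: take $\mathbf s=\mathbf t_1$ and $\mathbf s_0=\mathbf 0$.

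Your final route, $|k\mathcal P\cap(\mathbf t+\Lambda)|=\operatorname{vol}_\Lambda(k\mathcal P)+O(k^{m-1})$ uniformly in $\mathbf t$, is more elementary and more robust. It needs neither Ehrhart's theorem nor any control over how $\mathbf t_k$ varies with $k$, and it gives exactly the stated bound. What the Ehrhart route adds is the knowledge that $|L_N(k)|$ is a quasipolynomial whose constituents share this leading coefficient, which is how the paper uses the theorem afterwards.

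One point both arguments leave implicit. Equating $|L_N(k)|$ with the full coset count requires $\mathbf t_k+\Lambda\subset\mathbb Z^m$, because Theorem~\ref{mt} counts integral cusp orders; a non-integral coset would give $|L_N(k)|=0$ while your estimate gives a positive count. The inclusion holds because $\mathbf t_1$ is the cusp-order vector of $f^{(N)}_{\mathbf a^{(1)}}$ with $a_0=2$, which is modular for $\Gamma_1(N)$. Your remark about ``any actual element of $L_N(k)$'' only covers the case $L_N(k)\neq\emptyset$, so you should replace it with this argument.
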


\begin{proof}
Theorem~\ref{mt} demonstrates the bijection
\begin{align*}
L_{N}(k)&\cong\left\{
\mathbf v=(v_{0},\ldots,v_{m-1})\in \mathbb{Z}^{m} \middle|\
\begin{array}{l}
v_{r}\geq0,\quad 0\le r\le m-1\\[1mm] 
\sum_{i=0}^{m-1} \mu_{i} v_{i} \leq
 k R_{N},\\[1mm]
\mathbf{v}-\frac{k}{12}\mathbf{g}-A_N\mathbf a^{(k)}\in V_{N}\mathbb{Z}^{m}
\end{array}
\right\}.
\end{align*}
Let $\Lambda_{N}=V_{N}\mathbb{Z}^{m}$ and define
$$
\mathcal{P}_{N}=\left\{
(y_0,\dots,y_{m-1})\in\mathbb R^{m}:
y_{0}\geq0,\,\ldots,y_{m-1}\geq0,\,\sum_{i=0}^{m-1}\mu_i y_i\le R_N
\right\}.
$$
Since translation preserves cardinality of the corresponding cosets,  the following set bijections hold:
$$
L_{N}(k)\cong k\mathcal{P}_{N}\cap \left(\frac{k}{12}\mathbf{g}+A_N\mathbf a^{(k)}+\Lambda_{N}\right)\cong k\left( \mathcal{P}_{N}-\frac{1}{12}\mathbf{g}-A_N\mathbf a^{(1)}\right)\cap\Lambda_{N}.
$$
Since the polytopes
\(\mathcal P_N-\frac1{12}\mathbf g-A_N\mathbf a^{(1)}\) and \(\mathcal P_N-\frac1{12}\mathbf g\) differ by a fixed translation, they have the same relative volume. Hence the quasipolynomials counting the number of lattice points in their dilations by $k$ have the same leading coefficient. 
Therefore, the preceding bijections imply that as quasipolynomials, $|L_{N}(k)|$ and $\left|k\left(\mathcal{P}_{N}-\frac{1}{12}\mathbf{g}\right)\cap\Lambda_{N}\right|$ have the same leading term. Invoking Lemma~\ref{lem:ehr} for $V=\mathbb{R}^{m}$, $\Lambda=\Lambda_{N}$ and $\mathcal{P}=\mathcal{P}_{N}-\frac{1}{12}\mathbf{g}$, one can tell that the leading term is ${\rm vol}_{\Lambda_{N}}(\mathcal{P}_{N}-\frac{1}{12}\mathbf{g})k^{m}$. The relative volume may be computed from elementary calculus, and one finds that
\begin{equation*}
{\rm vol}_{\Lambda_{N}}\left(\mathcal{P}_{N}-\frac{1}{12}\mathbf{g}\right)={\rm vol}_{\Lambda_{N}}(\mathcal{P}_{N})=\frac{R_N^m}
{m!\,\mu_0\mu_1\cdots\mu_m\,|\det V_N|}.  \qedhere
\end{equation*}
\end{proof}

\subsection{Generating functions}

It is well known that lattice-point generating functions for rational polyhedra are rational functions; see \cite[Chapter~13]{barvinok} and \cite[Chapter 4]{stanleyEC1}. The next theorem obtains such expansions for the generating function of $|L_{N}(k)|$ by using the fact that the generating function for $W_{\mathbf j}(n)$ from \eqref{thew-class} may be expressed as the product
\begin{align}\label{gen}
G_{\mathbf j}(x)
=
\sum_{n\ge0}W_{\mathbf j}(n)x^n
=
\prod_{r=0}^{m}
\frac{1}{1-\chi_r(\mathbf j)x^{\mu_r}}.
\end{align}
In particular, since the poles of the generating function are roots of unity (c.f., \cite[Lemma 3.24]{beckrobins}), we may derive an upper bound on the period of the quasipolynomial for $|L_{N}(k)|$  in terms of the \emph{exponent} of the cuspidal divisor class group, defined as the least common multiple of the invariant factors of $V_{N}$, $$E_{N} = \min \{ E \ge 1\ :\ E\cdot x =0\text{ for every } x\in \Bbb Z^{m} / V_N \Bbb Z^{m} \} = \operatorname{lcm}(d_1,\dots,d_m).$$
\begin{theorem} \label{genform}
Denote the generating function for $|L_N(k)|$ by\begin{equation}
\label{FN}
F_N(q)=\sum_{k\ge 0}|L_N(k)|\,q^k.
\end{equation}
\begin{enumerate}
\item
Then $F_{N}(q)$ is a rational function over $\Bbb Q$. 

\item
If $F_N(q) = P(q)/Q(q)$ with $\gcd (P(q), Q(q)) = 1$ and the denominator factors $Q(q)=\prod_{t=1}^{T}(1-\rho_t q)^{e_t},$ where $\rho_{t}$ is a root of unity, then \(|L_N(k)|\) is a quasipolynomial in $k$ of period dividing $\displaystyle \lcm_{1 \le t \le T} \Ord (\rho_{t})$, where $\Ord(\rho)$ is the multiplicative order of $\rho$. 

\item
In particular, if $\mu_{i} = |\Orb(\mathfrak c_{i})|$, the order of the rational Galois orbit of the cusp class $\mathfrak{c}_{i}$, and $R_N = \frac{1}{24}\sum_{d\mid N} d\,\varphi(d)\,\varphi(N/d)$, then the quasipolynomial period divides
\[
\operatorname{lcm}
\left(
E_N,\,
\frac{\mu_0E_N}{\gcd(\mu_0E_N,R_N)},\,
\dots,\,
\frac{\mu_mE_N}{\gcd(\mu_mE_N,R_N)}
\right).
\]
When $N= p \ge5$ is prime, the period of the quasipolynomial divides $$\frac{12E_p}{\gcd(p+1,12)}.$$

\end{enumerate}
\end{theorem}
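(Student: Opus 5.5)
The plan is to start from formula \eqref{thel}. Since $R_N$ is an integer (it is $\frac{1}{12}[{\rm SL}_2(\mathbb Z):\pm\Gamma_1(N)]/2$ times $\dots$; equivalently the valence formula forces $kR_N$ to be an integer for all $k$, so $R_N\in\mathbb Z$), we have
\[
|L_N(k)|=\frac{1}{d_1\cdots d_m}\sum_{\mathbf j}\xi(\mathbf j)^k W_{\mathbf j}(kR_N).
\]
Let $H_{\mathbf j}(q)=\sum_k W_{\mathbf j}(kR_N)q^k$ be the $R_N$-section of $G_{\mathbf j}$ in \eqref{gen}. Then
\[
F_N(q)=\frac{1}{d_1\cdots d_m}\sum_{\mathbf j}H_{\mathbf j}(\xi(\mathbf j)q).
\]

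\textbf{Part (1).} Write the section by roots-of-unity filtering:
\[
H_{\mathbf j}(x^{R_N})=\frac1{R_N}\sum_{\zeta^{R_N}=1}G_{\mathbf j}(\zeta x).
\]
This is a rational function in $x$ that is invariant under $x\mapsto\zeta x$, hence a rational function of $x^{R_N}$. So each summand is rational over $\mathbb Q(\omega_{E_N},\omega_{R_N})$. The sum over $\mathbf j$ is rational over a cyclotomic field, and its Taylor coefficients $|L_N(k)|$ are integers. A power series with rational coefficients that is rational over an extension field is rational over $\mathbb Q$; this follows from the Kronecker–Hankel determinant criterion, or by averaging over Galois conjugates. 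This gives (1).

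\textbf{Part (2).} This is the standard partial-fraction argument, as in \cite[Lemma 3.24]{beckrobins}. Expand $P/Q$ as a polynomial plus $\sum_t\sum_{e\le e_t}c_{t,e}(1-\rho_tq)^{-e}$. The coefficient of $q^k$ in $(1-\rho q)^{-e}$ is $\binom{k+e-1}{e-1}\rho^k$, a polynomial in $k$ times a function of period $\Ord(\rho)$. Summing these shows $|L_N(k)|$ agrees, for $k$ beyond $\deg P-\deg Q$, with a quasipolynomial of period dividing $\lcm\Ord(\rho_t)$. Agreement for all $k\ge0$ follows either because $\deg P<\deg Q$ (Ehrhart–Stanley reciprocity for rational polytopes, via Lemma~\ref{lem:ehr}), or by taking the quasipolynomial statement as holding for $k\ge0$ by the polytope description in the proof of Theorem~\ref{lt}.

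\textbf{Part (3).} Bound the poles. The poles of $G_{\mathbf j}(x)$ lie at $x$ with $x^{\mu_r}=\chi_r(\mathbf j)^{-1}$. Each $\chi_r(\mathbf j)$ is an $E_N$-th root of unity, so $x^{\mu_rE_N}=1$. Passing to the $R_N$-section, the poles in the variable $y=x^{R_N}$ satisfy $y^{\mu_rE_N/\gcd(\mu_rE_N,R_N)}=1$. The map $x\mapsto x^{R_N}$ sends the group of $\mu_rE_N$-th roots of unity onto the $\mu_rE_N/\gcd(\mu_rE_N,R_N)$-th roots of unity. Finally, substituting $q\mapsto\xi(\mathbf j)q$ multiplies the poles by $\xi(\mathbf j)^{-1}$, an $E_N$-th root of unity. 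So every pole of $F_N$ has order dividing the lcm of $E_N$ and the quantities $\mu_rE_N/\gcd(\mu_rE_N,R_N)$. Cancellation in $P/Q$ only removes poles, so part (2) gives the bound.

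\textbf{Prime case.} For $N=p$ prime the orbit sizes are $\mu=(p-1)/2$ for the $g=1$ orbit, $\mu=1$ for the $(p-1)/2$ orbits with $g=p$, and the rest. The index gives $R_p=(p^2-1)/24$. Then check that
\[
\frac{\mu E_p}{\gcd(\mu E_p,R_p)}
\]
divides $12E_p/\gcd(p+1,12)$ for $\mu\in\{1,(p-1)/2\}$. This is elementary gcd arithmetic, splitting on $p\bmod 12$ and using that $R_p$ is divisible by $(p-1)/2\cdot(p+1)/12$ in the appropriate sense.

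The main obstacle is making the section and pole argument airtight, in particular that the $R_N$-section introduces no new poles beyond the stated roots of unity. This arithmetic bookkeeping in the prime case is where I expect care is needed. I would first verify it on $p=11$ against \eqref{L11}.
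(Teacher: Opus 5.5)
Your proposal is correct and follows essentially the same route as the paper. Both arguments start from \eqref{thel} and \eqref{gen}, bound the orders of the relevant roots of unity by $\mu_rE_N$, pass to the $R_N$-section to reduce these to $\mu_rE_N/\gcd(\mu_rE_N,R_N)$, account for $\xi(\mathbf j)$ of order dividing $E_N$, and settle the prime case via $R_p=\mu_0\cdot\frac{p+1}{12}$, i.e.\ $\mu_0/\gcd(\mu_0,R_p)=12/\gcd(p+1,12)$.

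You are more careful than the paper in parts (1) and (2): the paper simply cites Stanley for rationality and silently assumes $\deg P<\deg Q$. However, properness needs no Ehrhart theory. Each $G_{\mathbf j}$ vanishes at $x=\infty$, so every section $H_{\mathbf j}$, and hence $F_N$, is already a proper rational function.
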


\begin{proof}
Part (1) is a property of quasipolynomials proven in \cite[Proposition 4.4.1]{stanleyEC1}.

To prove part (2), recall from \eqref{gen} that the generating function for the coefficients $W_{\mathbf j}(n)$ from \eqref{thew-class} is given by the product $G_{\mathbf j}(x)$. By \eqref{thel}, the generating function for $F_{N}(q)$ is given by \[F_N(q)
=
\frac{1}{d_1\cdots d_m}
\sum_{\mathbf j} H_{\mathbf j}(q), \qquad H_{\mathbf j}(q)
:=
\sum_{k\ge0}\xi(\mathbf j)^k W_{\mathbf j}(kR_N)q^k.\] Since each \(G_{\mathbf j}(x)\) has denominator
\(
\prod_{r=0}^{m}(1-\chi_r(\mathbf j)x^{\mu_r}),
\)
every pole of \(F_N(q)\) occurs at a root of unity. Therefore \(F_N(q)\) can be written in the form
\(
F_N(q)=\frac{P(q)}{Q(q)},
\)
where all roots of \(Q(q)\) are roots of unity. Factor \(Q(q)\) over \(\mathbb C\) as
\[
Q(q)=\prod_{t=1}^{T}(1-\rho_t q)^{e_t},
\qquad \rho_t \text{ a root of unity}.
\]
Then the partial fraction decomposition of \(F_N(q)\) has the form
\[
F_N(q)=\sum_{t=1}^{T}\sum_{m=1}^{e_t}\frac{A_{t,m}}{(1-\rho_t q)^m}.
\]
Now
\[
\frac{1}{(1-\rho_{t} q)^m}
=
\sum_{k\ge 0}\binom{k+m-1}{m-1}\rho_{t}^k q^k,
\]
so, if $[q^{k}]$ denotes the coefficients of $q^{k}$,
\begin{align} \label{pt}
[q^k]F_N(q)
=
\sum_{t=1}^{T}\sum_{m=1}^{e_t}
A_{t,m}\binom{k+m-1}{k}\rho_t^k.
\end{align}
Since \(\rho_t\) is a root of unity, each summand is a polynomial in \(k\) times a periodic function of \(k\). Hence \([q^k]F_N(q) = |L_N(k)|\) is a quasipolynomial in \(k\). 

Finally, the period divides the least common multiple of the respective orders of the roots of unity $\rho_{t}$ that occur. 
By the construction of \eqref{thel}, \(\xi(\mathbf j)\) and
\(\chi_r(\mathbf j)\) are roots of unity involving products of powers of \(\omega_i=e^{2\pi i/d_i}\). Therefore, every root of unity that appears in
\(\xi(\mathbf j)\) or in some \(\chi_r(\mathbf j)\) has order dividing
\(
E_N \).
Thus
\[
\xi(\mathbf j)^{E_N}=1,
\qquad
\chi_r(\mathbf j)^{E_N}=1
\]
for every \(\mathbf j\) and every \(r\). Now fix \(\mathbf j\) and consider the rational function $G_{\mathbf j}(x)$ from \eqref{gen}.
Each denominator factor of $G_{\mathbf j}(x)$ has the form
\(
1-\chi_r(\mathbf j)x^{\mu_r},
\) where $\mu_r$ are the sizes of the Galois orbits.
The roots \(\rho\) satisfy
\(
\rho^{\mu_r}=\chi_r(\mathbf j)^{-1}.
\)
Since \(\chi_r(\mathbf j)^{E_N}=1\), we get
\(
\rho^{\mu_rE_N}=1.
\)
Therefore every root \(\rho\) of this factor is a root of unity whose order
divides
\(
\mu_rE_N.
\) 
If \(\rho\) has order dividing \(\mu_rE_N\), then the period of
$\rho^{kR_{N}}$
divides
\[
\frac{\mu_rE_N}{\gcd(\mu_rE_N,R_N)}.
\]
Note that the factors
\(
\xi(\mathbf j)^k
\)
have orders dividing \(E_N\). Therefore every periodic factor appearing in the
coefficient formula has period dividing
\begin{align} \label{di}
\operatorname{lcm}
\left(
E_N,\,
\frac{\mu_0E_N}{\gcd(\mu_0E_N,R_N)},\,
\dots,\,
\frac{\mu_mE_N}{\gcd(\mu_mE_N,R_N)}
\right).
\end{align}

Now suppose $N=p \ge 5$ is prime. The Galois orbit sizes for prime level are
\[
\mu_0=\frac{p-1}{2},
\qquad
\mu_1=\cdots=\mu_m=1, \quad \text{and} \quad R_{p}=\frac{p^2-1}{24} = \mu_0\cdot\frac{p+1}{12}.
\]
By \eqref{di}, the quasipolynomial period divides
\[
\operatorname{lcm}
\left(
E_p,\,
\frac{\mu_0 E_p}{\gcd(\mu_0 E_p,R_p)},\,
\frac{E_p}{\gcd(E_p,R_p)} 
\right) = \operatorname{lcm}
\left(
E_p,\,
\frac{\mu_0 E_p}{\gcd(\mu_0 E_p,R_p)}
\right) \mid \frac{\mu_0 E_{p}}{\gcd(\mu_0,R_p)}.
\]
Since
\(
R_p=\mu_0\frac{p+1}{12},
\)
the number
\[
\frac{\mu_0}{\gcd(\mu_0,R_p)}
\]
is the denominator of
\[
\frac{R_p}{\mu_0}=\frac{p+1}{12}
\]
in lowest terms. Therefore
\[
\frac{\mu_0}{\gcd(\mu_0,R_p)}
=
\frac{12}{\gcd(p+1,12)}.
\]
This proves the final claim of the theorem.
\end{proof}

\subsection{Computing quasipolynomial constituents by interpolation and recursion}

Beck and Robins \cite[Lemma 3.14]{beckrobins} expand lattice counting functions in terms of binomial coefficients. Theorem \ref{int} similarly interpolates the polynomial constituents of $|L_N(k)|$ in terms of binomial coefficients and, for Galois orbit orders \(\boldsymbol\mu=(\mu_0,\dots,\mu_m)\), the weighted complete
homogeneous polynomial
\[
h_n^{\boldsymbol\mu}(x_0,\dots,x_m)
= 
\sum_{\substack{\mu_0e_0+\cdots+\mu_m e_m=n\\e_{0},\,\ldots,\,e_{m}\geq0}}
x_0^{e_0}\cdots x_m^{e_m}.
\]
\begin{theorem} \label{int} Suppose $E$ is a period of the quasipolynomial for $|L_N(k)|$. For $0\le r<E$,
\begin{equation}\label{eq:general-binomial-piece}
|L_N(k)|
=
\sum_{\ell=0}^{m}
p_{r,\ell}
\binom{t+m-\ell}{m}, \qquad k=Et+r,
\end{equation}
where 
\begin{align*}
p_{r,s} &= \sum_{a=0}^{s}
(-1)^a\binom{m+1}{a}|L_N(E(s-a)+r)| \\ 
&=
\frac1{|\det V_N|}
\sum_{j_1=0}^{d_1-1}\cdots\sum_{j_m=0}^{d_m-1}
\sum_{a=0}^{s}
(-1)^a\binom{m+1}{a}
\xi(\mathbf j)^{E(s-a)+r}
h_{R_N(E(s-a)+r)}^{\boldsymbol\mu}
\bigl(
\chi_0(\mathbf j),\dots,\chi_m(\mathbf j)
\bigr).
\end{align*}
\end{theorem}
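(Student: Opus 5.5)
Fix a residue $0\le r<E$ and set $g_r(t)=|L_N(Et+r)|$ for $t\ge 0$. Since $E$ is a period of the quasipolynomial for $|L_N(k)|$, and its degree is $m$ (Theorem~\ref{lt}, with all lower-degree pieces of degree $<m$), $g_r$ is an honest polynomial in $t$ of degree at most $m$. The plan is to prove \eqref{eq:general-binomial-piece} through the generating function of this constituent, in the manner of Beck--Robins \cite[Lemma 3.14]{beckrobins}.

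\textbf{Step 1: the generating function of $g_r$.} Every polynomial of degree at most $m$ has a generating function of the form
\[
\sum_{t\ge0} g_r(t)z^t=\frac{h_r(z)}{(1-z)^{m+1}}, \qquad h_r(z)=\sum_{\ell=0}^{m}p_{r,\ell}z^\ell ,
\]
and this is the form we will use. Multiplying both sides by $(1-z)^{m+1}=\sum_a(-1)^a\binom{m+1}{a}z^a$ and comparing coefficients of $z^s$ for $s\le m$ gives
\[
p_{r,s}=\sum_{a=0}^{s}(-1)^a\binom{m+1}{a}g_r(s-a)=\sum_{a=0}^{s}(-1)^a\binom{m+1}{a}|L_N(E(s-a)+r)| .
\]
This is the first expression for $p_{r,s}$ in the statement.

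\textbf{Step 2: recovering \eqref{eq:general-binomial-piece}.} Now expand $z^\ell(1-z)^{-m-1}=\sum_{t}\binom{t-\ell+m}{m}z^t$. The binomial coefficient $\binom{t+m-\ell}{m}$ vanishes for $0\le t<\ell\le m$, since then $0\le t+m-\ell<m$. So we may extract the coefficient of $z^t$ uniformly in $t\ge 0$, and this yields \eqref{eq:general-binomial-piece}. The one point needing care is that \eqref{eq:general-binomial-piece} must hold from $t=0$. For this we need the quasipolynomial to agree with $|L_N(k)|$ for every $k\ge0$, not merely for large $k$. We will justify this from Theorem~\ref{genform}: $F_N(q)$ is a proper rational function whose poles are roots of unity. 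To see properness, note that by \eqref{thel} each $H_{\mathbf j}$ is a section of the proper function $G_{\mathbf j}$ along an arithmetic progression, so $\deg P<\deg Q$. Then \eqref{pt} holds for all $k\ge 0$ with no polynomial correction, which also forces $\deg g_r\le m$. We expect this properness check to be the main (minor) obstacle.

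\textbf{Step 3: the second formula for $p_{r,s}$.} Substitute \eqref{thel} for each value $|L_N(E(s-a)+r)|$. Note $d_1\cdots d_m=|\det V_N|$, since $P_N,Q_N$ are unimodular. By \eqref{gen}, $W_{\mathbf j}(n)$ is the coefficient of $x^n$ in $\prod_r(1-\chi_r(\mathbf j)x^{\mu_r})^{-1}$, which is exactly $h^{\boldsymbol\mu}_n(\chi_0(\mathbf j),\dots,\chi_m(\mathbf j))$. Taking $n=R_N(E(s-a)+r)$ and interchanging the finite sums gives the stated expression.
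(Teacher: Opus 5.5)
Your proposal is correct and follows essentially the same route as the paper. Both write the constituent's generating function as $P_r(z)/(1-z)^{m+1}$, extract $p_{r,s}$ by multiplying through by $(1-z)^{m+1}$, and then substitute \eqref{thel} with $W_{\mathbf j}(n)=h^{\boldsymbol\mu}_n(\chi_0(\mathbf j),\dots,\chi_m(\mathbf j))$. Your two extra checks fill in points the paper uses tacitly: that $F_N(q)$ is proper, so each constituent agrees with $|L_N(Et+r)|$ already from $t=0$, and that $d_1\cdots d_m=|\det V_N|$.
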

\begin{proof}
Fix a residue class \(r\pmod E\), and write \(k=Et+r\). Since \(E\) is a
period of the quasipolynomial for \(|L_N(k)|\), Lemma \ref{lt} implies the sequence
\(
b_{r,t}=|L_N(Et+r)|
\)
is a polynomial in \(t\) of degree \(m\). The polynomials
\(
\binom{t+m}{m},\binom{t+m-1}{m},\dots,\binom{t}{m}
\)
form a basis for the vector space of polynomials in \(t\) of degree at most
\(m\). Hence $b_{r,t}$ has a unique
expansion 
\[
b_{r,t}
=
\sum_{\ell=0}^{m}
p_{r,\ell}\binom{t+m-\ell}{m}.
\]
Let $B_r(z)=\sum_{t\ge0}b_{r,t}z^t$. Then 
\[
B_r(z)=\frac{P_r(z)}{(1-z)^{m+1}},
\qquad 
P_r(z)=\sum_{\ell=0}^{m}p_{r,\ell}z^\ell.
\]
Taking the coefficient of \(z^s\) in $P_r(z)=(1-z)^{m+1}B_r(z)$ gives
\begin{align} \label{tr}
p_{r,s}
=
\sum_{a=0}^{s}
(-1)^a\binom{m+1}{a}b_{r,s-a}
=
\sum_{a=0}^{s}
(-1)^a\binom{m+1}{a}|L_N(E(s-a)+r)|.
\end{align}
From \eqref{thel},
\begin{align} \label{pi}
|L_N(u)|
=
\frac{1}{d_1\cdots d_m}
\sum_{j_1=0}^{d_1-1}\cdots\sum_{j_m=0}^{d_m-1}
\xi(\mathbf j)^u
h_{R_N u}^{\boldsymbol\mu}
\bigl(\chi_0(\mathbf j),\dots,\chi_m(\mathbf j)\bigr).
\end{align}
Taking \(u=E(s-a)+r\) in \eqref{pi} and applying \eqref{tr} gives the stated formula.
\end{proof}
Theorem \ref{int} says that if we can compute either of the two expressions for the coefficients $p_{r,s}$ in Theorem \ref{int}, then the quasipolynomial constituents for each congruence class $k = Et +r$ may be written down explicitly. In practice, the class group size and structure make the final expression of Theorem \ref{int} difficult to compute. It is more efficient in most cases to derive the first expression for $p_{r,s}$ in terms of a finite number of direct counts for $|L_N(k)|$. For this purpose, we introduce a recursive technique for computing the polytope counts. The recursion may be broken into pieces and computed in parallel, making the computation of the polynomial constituents for larger $N$ feasible.  
\begin{theorem} \label{recur}
Let the cusps $\mathfrak{c}_{i}$ be ordered so that
\(
\mathfrak c_m=[i\infty],
\)
and denote 
\(
\mathbf v=(v_{\mathfrak c_0},\dots,v_{\mathfrak c_{m-1}})^T
\). Let
\[
P_NV_NQ_N=D_N=\operatorname{diag}(d_1,\dots,d_m),
\qquad
P_N,Q_N\in\operatorname{GL}_m(\mathbb Z),
\]
be a Smith normal form of the class matrix \(V_N\). Write the congruences in \eqref{smith1}--\eqref{smith2} as
\[
P_N\mathbf v+\boldsymbol\alpha_N k
\equiv \mathbf 0 \pmod{D_N}.
\]
Let
\(
\mathcal R_N
=
\mathbb Z/d_1\mathbb Z
\times\cdots\times
\mathbb Z/d_m\mathbb Z.
\) For \(0\le i\le m-1\), let
\(
\boldsymbol\gamma_i
\)
be the \((i+1)\)-th column of \(P_N\), viewed as an element of \(\mathcal R_N\), and set
\(
\boldsymbol\gamma_m=\mathbf 0\in\mathcal R_N.
\)
Also set
\(
\boldsymbol\beta=\boldsymbol\alpha_N\in\mathcal R_N.
\)
For \(0\le j\le m+1\), \(S\in\mathbb Z\), and
\(\boldsymbol\rho\in\mathcal R_N\), define \(D_j(S,\boldsymbol\rho)\) by
\[
D_0(S,\boldsymbol\rho)
=
\begin{cases}
1, & S=0 \text{ and } \boldsymbol\rho=\mathbf 0,\\
0, & \text{otherwise,}
\end{cases}
\]
and, for \(1\le j\le m+1\), by the recursion
\[
D_j(S,\boldsymbol\rho)
=
D_{j-1}(S,\boldsymbol\rho)
+
D_j(S-\mu_{j-1},\boldsymbol\rho-\boldsymbol\gamma_{j-1}),
\]
with
\(
D_j(S,\boldsymbol\rho)=0
\) whenever $S<0$.
Then, for every \(k\ge0\),
\[
|L_N(k)|=
D_{m+1}(kR_{N},-k\boldsymbol\beta).
\]
\end{theorem}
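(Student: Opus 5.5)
Our plan is to interpret $D_j(S,\boldsymbol\rho)$ combinatorially and then specialize, using the reduction of $L_N(k)$ to cusp-order vectors given by Theorem~\ref{mt}. The claim we will establish by induction on $j$ is that, for $0\le j\le m+1$, $D_j(S,\boldsymbol\rho)$ counts the tuples $(v_0,\dots,v_{j-1})\in\mathbb Z_{\ge0}^{j}$ with
\[
\sum_{i=0}^{j-1}\mu_i v_i=S,\qquad \sum_{i=0}^{j-1}v_i\boldsymbol\gamma_i=\boldsymbol\rho \ \text{ in } \mathcal R_N .
\]
For $j=0$ the only tuple is the empty one, with weight sum $0$ and class $\mathbf 0$, which matches $D_0$. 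For $j\ge1$ we split according to whether $v_{j-1}=0$ or $v_{j-1}\ge1$. Tuples with $v_{j-1}=0$ correspond bijectively to tuples of length $j-1$ with the same $(S,\boldsymbol\rho)$, giving $D_{j-1}(S,\boldsymbol\rho)$. Tuples with $v_{j-1}\ge1$ correspond bijectively, via $v_{j-1}\mapsto v_{j-1}-1$, to length-$j$ tuples with data $(S-\mu_{j-1},\boldsymbol\rho-\boldsymbol\gamma_{j-1})$, giving $D_j(S-\mu_{j-1},\boldsymbol\rho-\boldsymbol\gamma_{j-1})$.

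The recursion is well-founded because every $\mu_i\ge1$ and $S$ strictly decreases in the second term. The convention $D_j=0$ for $S<0$ is consistent because all weighted sums are nonnegative. Taking $j=m+1$ and $\boldsymbol\gamma_m=\mathbf 0$, we get that $D_{m+1}(S,\boldsymbol\rho)$ counts $(v_0,\dots,v_m)\in\mathbb Z_{\ge0}^{m+1}$ with $\sum_{r=0}^m\mu_r v_r=S$ and $P_N\mathbf v\equiv\boldsymbol\rho \pmod{D_N}$, where $\mathbf v=(v_0,\dots,v_{m-1})^T$. This uses that $\sum_{i<m}v_i\boldsymbol\gamma_i$ is exactly $P_N\mathbf v$ reduced componentwise modulo $d_1,\dots,d_m$, and that $v_m$ contributes nothing to the congruence.

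Next we specialize to $S=kR_N$ and $\boldsymbol\rho=-k\boldsymbol\beta=-k\boldsymbol\alpha_N$. The conditions then become precisely the valence equation \eqref{val1}, nonnegativity of all $m+1$ cusp orders, and the congruence \eqref{smith2}, namely $P_N\mathbf v+\boldsymbol\alpha_Nk\equiv\mathbf 0\pmod{D_N}$. We then invoke Theorem~\ref{mt}: the map $\mathbf a\mapsto(\Ord_{\mathfrak c_r}f^{(N)}_{\mathbf a})_{r=0}^m$ is a bijection from $L_N(k)$ onto this set. Injectivity holds because $A_N$ is invertible by Lemma~\ref{lem:ANBN-equals-VN}, and $v_{\mathfrak c_m}$ is determined by the valence equation. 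This yields $|L_N(k)|=D_{m+1}(kR_N,-k\boldsymbol\beta)$.

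The main obstacle is checking that the congruence in Theorem~\ref{mt}, $\bigl(P_N(\mathbf w(\mathbf v)-A_N\mathbf a^{(k)})\bigr)_r\equiv0\pmod{d_r}$, really coincides with $P_N\mathbf v+k\boldsymbol\alpha_N\equiv\mathbf 0$. Expanding gives $P_N\mathbf v-kP_N(\tfrac1{12}\mathbf g)-P_NA_N\mathbf a^{(k)}$. So we need $P_NA_N\mathbf a^{(k)}\equiv kP_NA_N\mathbf a^{(1)}$ modulo $D_N\mathbb Z^m=P_NV_N\mathbb Z^m$, that is, $\mathbf a^{(k)}-k\mathbf a^{(1)}\in\Pi_N$. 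By Lemma~\ref{lem:psolution}, $\mathbf a^{(s)}$ is linear in $s$, so this difference is actually $\mathbf 0$, and $\boldsymbol\alpha_N$ as defined in \eqref{smith3} gives exactly the required congruence. We will also note that $\boldsymbol\alpha_N$ is integral, as argued in the preceding theorem, so that $-k\boldsymbol\beta$ is a legitimate element of $\mathcal R_N$.
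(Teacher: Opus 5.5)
Your proposal is correct and follows the paper's proof essentially step for step. Both arguments interpret $D_j(S,\boldsymbol\rho)$ combinatorially by splitting on whether the last coordinate vanishes, then specialize to $S=kR_N$, $\boldsymbol\rho=-k\boldsymbol\beta$ and invoke Theorem~\ref{mt}; your explicit check that the congruence of Theorem~\ref{mt} coincides with $P_N\mathbf v+k\boldsymbol\alpha_N\equiv\mathbf 0\pmod{D_N}$ (because $\mathbf a^{(k)}=k\,\mathbf a^{(1)}$) fills in a step the paper only asserts, rather than departing from its route.
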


\begin{proof}
We first prove that the recursion counts the number of lattice points satisfying the congruence conditions. For $0\le j\le m+1$, let $C_{j}(S,\boldsymbol{\rho})$ be the number of tuples $(v_{\mathfrak{c}_{0}},\ldots,v_{\mathfrak{c}_{j-1}})\in\mathbb{Z}_{\ge0}^{j}$ satisfying $$\sum_{i=0}^{j-1}\mu_{i}v_{\mathfrak{c}_{i}}=S,\qquad\sum_{i=0}^{j-1}\boldsymbol{\gamma}_{i}v_{\mathfrak{c}_{i}}=\boldsymbol{\rho}\quad\text{in }\mathcal{R}_{N}.$$ For $j=0$, there is one empty tuple of total $0$ and residue $\mathbf{0}$, and no other empty tuples. Hence $C_{0}=D_{0}$. 

Assume the interpretation is known for smaller totals at the same stage and for the previous stage. Fix $j\ge1$. Partition the tuples counted by $C_{j}(S,\boldsymbol{\rho})$ according to the last coordinate $v_{\mathfrak{c}_{j-1}}$. If $v_{\mathfrak{c}_{j-1}}=0$, the first $j-1$ coordinates have the same total and the same residue, so these tuples are counted by $D_{j-1}(S,\boldsymbol{\rho})$. If $v_{\mathfrak{c}_{j-1}}>0$, subtracting $1$ from $v_{\mathfrak{c}_{j-1}}$ gives a tuple using the same j variables, but with total $S-\mu_{j-1}$ and residue $\boldsymbol{\rho}-\boldsymbol{\gamma}_{j-1}$. This operation is reversible by adding $1$ back to the last coordinate. Thus the tuples with positive last coordinate are counted by $$D_{j}(S-\mu_{j-1},\boldsymbol{\rho}-\boldsymbol{\gamma}_{j-1}).$$ 
The two cases are disjoint and exhaustive, so
\[
C_j(S,\boldsymbol\rho)
=
D_{j-1}(S,\boldsymbol\rho)
+
D_j(S-\mu_{j-1},\boldsymbol\rho-\boldsymbol\gamma_{j-1}).
\]
Thus, by induction on \(j\) and \(S\),
\(
D_j(S,\boldsymbol\rho)=C_j(S,\boldsymbol\rho)
\) for all \(j,S,\boldsymbol\rho\). Taking \(j=m+1\), the value
\(
D_{m+1}(kR_{N},\boldsymbol\rho)
\)
counts all nonnegative cusp-order vectors satisfying the valence equation
\(
\sum_{i=0}^{m}\mu_i v_{\mathfrak c_i}=kR_{N}
\)
and having Smith residue \(\boldsymbol\rho\).

It remains to identify the residue required by the class-matrix congruences.
For \(a_0=2k\), the congruences are
\[
P_N\mathbf v
\equiv
-k\boldsymbol\alpha_N
\pmod{D_N}.
\]
By definition of the residue vectors \(\boldsymbol\gamma_i\), and since \(\boldsymbol\gamma_m=\mathbf 0\), we have
\[
P_N\mathbf v
=
\sum_{i=0}^{m}\boldsymbol\gamma_i v_{\mathfrak c_i}.
\]
Therefore the required residue as an element of $\mathcal R_N$ is
\[
\sum_{i=0}^{m}\boldsymbol\gamma_i v_{\mathfrak c_i}
=
-k\boldsymbol\beta, \qquad \text{where}\ \boldsymbol\beta=\boldsymbol\alpha_N.
\]
Hence
\(
D_{m+1}(kR_{N},-k\boldsymbol\beta)
\)
counts the nonnegative cusp-order vectors satisfying the valence equation and the class congruences. By Theorem~\ref{mt}, these vectors are precisely the cusp-order vectors counted by \(L_N(k)\). Therefore
\(
|L_N(k)|
=
D_{m+1}(kR_{N},-k\boldsymbol\beta).
\)
\end{proof}

\begin{example}
We will apply the recursion in Theorem \ref{recur} to derive $|L_{11}(1)|$. Our construction of the  quasipolynomial for $N=11$ in Theorem \ref{level11} does not require this, but the small prime is a good case to illustrate the computational effectiveness of the recursion. We need to enumerate nonnegative integer vectors $(v_0, v_1, v_2, v_3, v_4, v_5)$ satisfying $$5v_{0}+v_{1}+v_{2}+v_{3}+v_{4}+v_{5}=5$$ and $$v_{0}+v_{1}+2v_{2}+4v_{3}+3v_{4}+2k\equiv0\pmod 5.$$ Write the coefficients of the congruence as $\gamma=(1,2,4,3,0).$ For $0\le j\le5$, let $D_{j}(S,r)$ denote the number of choices of $(v_{1},\ldots,v_{j})\in\mathbb{Z}_{\ge0}^{j}$ such that \begin{align} \label{sat1}
v_{1}+\cdots+v_{j}=S,\qquad\gamma_{1}v_{1}+\cdots+\gamma_{j}v_{j}\equiv r\pmod 5.
\end{align}
We initialize $D_{0}(0,0)=1$, and all other entries of $D_{0}$ are zero. This determines the initial table indexed by $j=0$ below. Then successive rows can be computed from prior rows. For example, $$D_{1}(1,1)=D_{0}(1,1)+D_{1}(0,0)=0+1,$$ and $$D_{1}(1,2)=D_{0}(1,2)+D_{1}(1-1,2-1)=D_{0}(1,2)+D_{1}(0,1)=0.$$
Proceeding with each row, we get the following arrays, where each ordered tuple records the tuple $(D_j(S,0),D_j(S,1),D_j(S,2),D_j(S,3),D_j(S,4)):$
\[
\begin{array}{c|c|c|c|c|c|c}

S & j=0 & j=1 & j=2 & j=3 & j=4 & j=5\\

\hline

0 &(1,0,0,0,0)&(1,0,0,0,0)&(1,0,0,0,0)&(1,0,0,0,0)&(1,0,0,0,0)&(1,0,0,0,0)\\

1 &(0,0,0,0,0)&(0,1,0,0,0)&(0,1,1,0,0)&(0,1,1,0,1)&(0,1,1,1,1)&(1,1,1,1,1)\\

2 &(0,0,0,0,0)&(0,0,1,0,0)&(0,0,1,1,1)&(1,1,1,2,1)&(2,2,2,2,2)&(3,3,3,3,3)\\

3 &(0,0,0,0,0)&(0,0,0,1,0)&(1,1,0,1,1)&(2,2,2,2,2)&(4,4,4,4,4)&(7,7,7,7,7)\\

4 &(0,0,0,0,0)&(0,0,0,0,1)&(1,1,1,1,1)&(3,3,3,3,3)&(7,7,7,7,7)&(14,14,14,14,14)\\

5 &(0,0,0,0,0)&(1,0,0,0,0)&(2,1,1,1,1)&(5,4,4,4,4)&(12,11,11,11,11)&(26,25,25,25,25)
\end{array}
\]
Thus, in the row labeled \(S\) and the column labeled \(j\), the ordered tuple
records how many choices of
\(
(v_1,\ldots,v_j)\in\mathbb Z_{\ge0}^{j}
\) satisfy \eqref{sat1}. Now fix $v_{0}=a$. 
Then the valence equation becomes
\(
S=v_1+v_2+v_3+v_4+v_5=5k-5a, \)
 and residues satisfy $v_{1}+2v_{2}+4v_{3}+3v_{4}\equiv-2k-a\pmod 5$. Thus the lattice count is assembled from the $j=5$ table by $$|L_{11}(k)|=\sum_{a=0}^{k}D_{5}(5k-5a,-2k-a).$$ For $k=1$, this gives $$|L_{11}(1)|=D_{5}(5,3)+D_{5}(0,2)=25+0=25.$$

The recursive method using the table above computes $30$ possible values, while the direct method would need to test $127$ tuples coming from the valence formula to determine if the congruence is satisfied. For fixed \(N\), the corresponding recursive table from Theorem \ref{recur} grows linearly in \(k\), while the
direct list of valence candidates requires \(O(k^m)\) steps. In the next section, this dynamic computation of lattice counts for small weights is used to interpolate quasipolynomial constituents and derive general formulas for $|L_N(k)|$. 
\end{example}

\section{Examples}

In this section, we illustrate computational strategies for enumerating the vectors in $L_{N}(k)$. Theorem~\ref{mt} implies there is a one-to-one correspondence between exponent vectors $\mathbf a \in L_{N}(k)$ and vectors of nonnegative integers  \(
\mathbf v \) satisfying the valence formula \eqref{valence} and
\begin{equation} \label{cong}
\bigl(P_N(\mathbf w(\mathbf v)-A_N\mathbf a^{(k)})\bigr)_r\equiv 0 \pmod{d_r},
\ 1\le r\le m,
\end{equation}
where we recall $m=\lfloor N/2\rfloor$, $P_{N}V_{N}Q_{N}=\mathrm{diag}(d_1,\dots,d_m)$ is the Smith normal form of the divisor class matrix $V_N$ and ${\bf a}^{(k)}$ is defined as in Lemma \ref{lem:psolution}.

\subsection{Genus 0}
For $N \ge 5$ such that the genus of $X_1(N)$ is zero, i.e., $N=5,\ldots,10,12$, the class matrix is trivial,  so \eqref{cong} is a tautology. In these cases $|L_N(k)|$ may be expressed in terms of simple binomial sums.
\begin{theorem}[Levels $N=5,\ldots,10,12$]
\label{genuszero}
For $N \ge 5$, when $X_1(N)$ has genus zero, the following enumerate $L_{N}(k)$:
\begin{center}
 \small
\setlength{\tabcolsep}{10pt}
 \renewcommand{\arraystretch}{1.15}
 \begin{tabular}{c l | c l}
\hline
$N$ & $\displaystyle |L_N(k)|$ & $N$ & $\displaystyle |L_N(k)|$ \\ \hline 

$5$  & $\displaystyle \sum_{j=0}^{\left\lfloor k/2\right\rfloor}
                 \binom{k-2j+1}{1}$
     & $8$  & $ \displaystyle \sum_{j=0}^{k}
                 \binom{2k-2j+3}{3}$ \\[12pt]

$6$  & $\displaystyle \binom{k+3}{3}$
     & $9$  & $\displaystyle \sum_{j=0}^{k}
                 \ \sum_{\ell=0}^{\left\lfloor \frac{3k}{2}-\frac{3j}{2}\right\rfloor}
                 \binom{3k-3j-2\ell+2}{2}$ \\[16pt]

$7$  & $\displaystyle \sum_{j=0}^{\left\lfloor 2k/3\right\rfloor}
                 \binom{2k-3j+2}{2}$
     & $10$ & $\displaystyle \sum_{j=0}^{\left\lfloor 3k/2\right\rfloor}
                 (j+1)\,\binom{3k-2j+3}{3}$ \\[16pt]

     \ &\ & $12$ & $\displaystyle \sum_{j=0}^{2k}
                 \binom{j+2}{2}\,\binom{4k-2j+3}{3}$  
                \\ \hline 
\end{tabular}
\end{center}
\end{theorem}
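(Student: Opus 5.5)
The plan is to reduce the count to an unrestricted weighted composition problem and then evaluate it case by case. For each listed $N$ the curve $X_1(N)$ has genus zero. Its Jacobian is therefore trivial, and every degree-zero divisor is principal. In particular every degree-zero $\mathbb Q$-rational cuspidal divisor is the divisor of a function, which is defined over $\mathbb Q$ by Galois descent and so gives a modular unit. Hence $\Cl^c_{\mathbb Q}(X_1(N))$ is trivial, and the class matrix $V_N$ is unimodular. As an independent check, I would compute $V_N=A_NB_N$ from Lemma~\ref{lem:ANBN-equals-VN} and Lemma~\ref{prop:explicit-BN} and verify $|\det V_N|=1$; this is a finite matrix computation for each $N$.

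With $D_N=I$, every congruence in Theorem~\ref{mt} is vacuous. Theorem~\ref{mt} then identifies $L_N(k)$ bijectively with the nonnegative integer vectors $(v_{\mathfrak c_0},\dots,v_{\mathfrak c_m})$ satisfying the valence equation $\sum_r \mu_r v_{\mathfrak c_r}=kR_N$. Equivalently, $|L_N(k)|$ is the coefficient of $x^{kR_N}$ in $\prod_r (1-x^{\mu_r})^{-1}$, which is \eqref{gen} with all $\chi_r=1$.

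Next I tabulate the orbit sizes $\mu_r$ using \eqref{thew-class} and compute $R_N$ for each $N$.
\begin{itemize}
\item For $N=5$: $\mu=(2,1,1)$ and $R_5=1$.
\item For $N=6$: $\mu=(1,1,1,1)$ and $R_6=1$.
\item For $N=7$: $\mu=(3,1,1,1)$ and $R_7=2$.
\item For $N=8$: the cusp with $g=1$ has orbit size $2$, the orbits with $g=2,4$ have size $1$, and the remaining orbits have size $1$. This gives $\mu=(2,1,1,1,1)$ and $R_8=2$.
\item For $N=9,10,12$: the sizes are computed in the same way, with $R_9=3$, $R_{10}=3$ and $R_{12}=4$.
\end{itemize}

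The count is then done by stars and bars. I sum over the variables carrying weight greater than $1$. The remaining $t$ variables of weight $1$ must add up to the leftover total $n$, which contributes $\binom{n+t-1}{t-1}$. For example:
\begin{itemize}
\item $N=5$: fixing $v_0=j$ leaves $v_1+v_2=k-2j$, giving $\sum_{j}\binom{k-2j+1}{1}$.
\item $N=7$: fixing the weight-$3$ variable at $j$ leaves $\binom{2k-3j+2}{2}$.
\item $N=10$: there are two weight-$2$ variables whose sum is $j$, which accounts for the factor $j+1$.
\item $N=12$: the factor $\binom{j+2}{2}$ counts three weight-$2$ variables with total $j$.
\item $N=9$: there is a weight-$3$ variable (index $j$) and a weight-$2$ variable (index $\ell$), which produces the nested sum.
\end{itemize}

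The main obstacle is bookkeeping rather than theory. I have to get the Galois orbit sizes right for each $N$, especially distinguishing the $g=1,2$ cases, which are halved, from $g>2$. I also have to match the resulting summation ranges exactly with the stated closed forms. To guard against errors, I would check each formula against the valence-formula sum $\sum_r\mu_r = [\mathrm{SL}_2(\mathbb Z):\pm\Gamma_1(N)]/(\ldots)$ consistency and against small-$k$ values from the recursion of Theorem~\ref{recur}.
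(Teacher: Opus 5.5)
Your proposal is correct and follows essentially the same route as the paper. In genus zero the class matrix is trivial, so the congruences of Theorem~\ref{mt} are vacuous, and $|L_N(k)|$ becomes the number of nonnegative solutions of $\sum_r \mu_r v_{\mathfrak c_r}=kR_N$, counted by stars and bars after summing over the variables of weight greater than $1$ (the paper writes this out only for $N=12$, with orbit sizes $(2,2,2,1,1,1,1)$ and $R_{12}=4$). Your orbit sizes and values of $R_N$ all check out, and your genus-zero/Galois-descent argument for $|\det V_N|=1$ supplies a justification that the paper merely asserts.
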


\begin{proof}
The case $N=12$ illustrates the combinatorial argument for the other cases. Choose cusp representatives for each Galois orbit $
\mathfrak c_0,\dots,\mathfrak c_6
$
with orbit sizes ordered so that
\[
(\Orb(\mathfrak c_0),\dots,\Orb(\mathfrak c_{6}))=(2,2,2,1,1,1,1).
\]
Then, with $v_{i} = \Ord_{\mathfrak c_i}(f_{\mathbf a}^{(N)})$, the valence formula \eqref{valence} becomes
\begin{equation} \label{val12}
2v_0+2v_1+2v_2+v_3+v_4+v_5+v_6=4k.
\end{equation}

Now group terms in \eqref{val12} with the same coefficients. The number of nonnegative triples \((v_0,v_1,v_2)\in \Bbb  Z^3\) with $
  v_0+v_1+v_2=j$
  is $\binom{j+2}{2}$, and the number of nonnegative \((v_3,v_4,v_5,v_6)\in \Bbb Z^4\) with
$ 
  v_3+v_4+v_5+v_6=4k-2j
$
  is
  $
  \binom{4k-2j+3}{3}$. Summing over $0 \le j \le 2k$ gives the claimed count.
\end{proof}
\begin{remark}
The formulas in Theorem \ref{genuszero} for $|L_N(k)|$ in the genus zero cases satisfy the general decomposition scheme \eqref{deft}--\eqref{defe} after formulas for sums of powers are applied to the binomial sums. The upper limits on the periods coming from Theorem \ref{genform} match the minimal periods. 
\end{remark}

\subsection{Higher genus}

For higher genera, when the class group is nontrivial, the same ideas allow us to compute the polynomial part $T_{N}(k)$ of the counting function but must be extended to include quasipolynomial parts to incorporate counts of cusp orders satisfying the valence formula \eqref{valence} and congruences \eqref{cong}. Fix $\omega_{n}=e^{2\pi i/n}$. We shall frequently make use of the fact  
\begin{align} \label{p_ind}
\frac{1}{n}\sum_{j=0}^{n-1}\omega_{n}^{j\ell }=\begin{cases}
1, & \ell\equiv0\pmod n,\\
0, & \ell \not\equiv0\pmod n.
\end{cases}
\end{align}
\begin{theorem}[Level $N=11$]\label{level11} Let $k\geq0$. Then
\[
|L_{11}(k)| = \frac{25 k^5}{24} + \frac{125 k^4}{24} + \frac{75 k^3}{8} + \frac{175 k^2}{24} + \frac{137 k}{60} + \frac{1}{5} + \mathcal{E}_{11}(k),
\]
where 
\[
\mathcal{E}_{11}(k)
=\begin{cases}
4/5, & k\equiv0\pmod 5,\\
-2/5, & k\equiv1\pmod 5,\\
2/5, & k\equiv2\pmod 5,\\
-4/5, & k\equiv3\pmod 5,\\
0, & k\equiv4\pmod 5.
\end{cases}
\]
\end{theorem}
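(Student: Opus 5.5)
The plan is to specialize the character-sum formula \eqref{thel} to $N=11$. The only arithmetic input is the Smith normal form of $V_{11}$; everything after that is a short generating-function computation.

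\textbf{Step 1: the arithmetic data.} Here $m=5$, and there are six Galois orbits of cusps. The orbit with $g=1$ has size $\mu_0=\varphi(11)/2=5$, and the five orbits with $g=11$ have size $1$. Also $R_{11}=(11^2-1)/24=5$, so the valence equation \eqref{valence} reads $5v_0+v_1+v_2+v_3+v_4+v_5=5k$.
\begin{itemize}
\item I will write down $A_{11}$ from \eqref{beta} and $B_{11}$ from Lemma~\ref{prop:explicit-BN} (with $M=11$, $r_{11}=1$, $a_{11}=-2$, $b_{11}=1$), and form $V_{11}=A_{11}B_{11}$.
\item I will compute its Smith normal form and check that $\operatorname{diag}(d_1,\dots,d_5)=\operatorname{diag}(1,1,1,1,5)$, consistent with $\Cl^c_{\mathbb Q}(X_1(11))\cong\mathbb Z/5\mathbb Z$.
\item I will read off the single nontrivial row of $P_{11}$ and the entry $\alpha_5$ of $\boldsymbol\alpha_{11}$ from \eqref{smith3}, using $\mathbf a^{(1)}=(-3,3,-1,0,0)$.
\end{itemize}
The target is exactly the congruence used in the Example following Theorem~\ref{recur}, namely $v_0+v_1+2v_2+4v_3+3v_4+2k\equiv0\pmod5$, with $v_5$ absent. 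This is the step I expect to be the main obstacle, since it is a genuine exact rational matrix computation. If the residues come out permuted, only the assignment of orbit representatives changes, not the argument. As a sanity check, $|L_{11}(1)|=25$ from the Example must come out of the final formula.

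\textbf{Step 2: the character sum.} With $\omega=e^{2\pi i/5}$, formula \eqref{thel} together with \eqref{gen} gives
\[
|L_{11}(k)|=\frac15\sum_{j=0}^{4}\omega^{2jk}\,[x^{5k}]\,\frac{1}{(1-\omega^{j}x^5)(1-\omega^{j}x)(1-\omega^{2j}x)(1-\omega^{4j}x)(1-\omega^{3j}x)(1-x)}.
\]
\begin{itemize}
\item For $j=0$ the term is $\frac15\sum_{a=0}^{k}\binom{5k-5a+4}{4}$, a genuine polynomial in $k$.
\item For $j\neq0$, the exponents $j,2j,4j,3j,0$ run over all residues mod $5$. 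Hence the five linear factors multiply to $1-x^5$, and the coefficient becomes $[x^{5k}]\,(1-\omega^jx^5)^{-1}(1-x^5)^{-1}=\sum_{a=0}^{k}\omega^{ja}$.
\item Summing over $j=1,\dots,4$ with \eqref{p_ind}, the contribution of the $j\ne0$ terms is $\frac15\sum_{a=0}^k\bigl(5\cdot\mathbf 1[a\equiv 3k \pmod 5]-1\bigr)$.
\end{itemize}
Therefore
\[
|L_{11}(k)|=\frac15\sum_{a=0}^{k}\binom{5k-5a+4}{4}-\frac{k+1}{5}+\#\{0\le a\le k:\ a\equiv 3k \pmod 5\}.
\]

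\textbf{Step 3: split into polynomial and periodic parts.}
\begin{itemize}
\item Evaluate $\sum_{b=0}^{k}\binom{5b+4}{4}$ by power sums. Together with $-(k+1)/5$ and the average value $(k+1)/5$ of the counting term, this should give $T_{11}(k)$. Its leading coefficient is $\frac{5^4}{5\cdot 4!}=\frac{25}{24}$, matching Theorem~\ref{lt}.
\item The counting term equals $\lfloor (k-r)/5\rfloor+1$, where $r\in\{0,\dots,4\}$ is the residue of $3k$ mod $5$. Subtracting its average gives a function of $k\bmod 5$ only; this is $\mathcal E_{11}(k)$.
\item I will tabulate it for $k\equiv0,\dots,4$ and compare with the claimed values $4/5,-2/5,2/5,-4/5,0$, absorbing any constant into the $1/5$ of $T_{11}$ so that the period-$5$ part has the stated normalization. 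The period $5$ agrees with the bound $12E_{11}/\gcd(12,12)=5$ from Theorem~\ref{genform}.
\end{itemize}
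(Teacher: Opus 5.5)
Your proposal is correct and follows the paper's proof essentially step for step: the same mod-$5$ congruence from the Smith normal form, and the same character-sum expansion, in which the $j=0$ term $\frac15\sum_{b=0}^{k}\binom{5b+4}{4}$ is exactly $T_{11}(k)$ and the five linear factors collapse to $1-x^5$ for $j\neq 0$; the only cosmetic difference is that you evaluate the $j\neq0$ part by orthogonality as $\#\{0\le a\le k:\ a\equiv 3k \pmod{5}\}-\frac{k+1}{5}$, which is exactly $\mathcal E_{11}(k)$ with no constant to absorb, whereas the paper sums a geometric series in each residue class. There is one arithmetic slip in your sanity check: $\frac{5^4}{5\cdot 4!}=\frac{125}{24}$, and the leading coefficient is really $\frac{5^4}{5\cdot 5\cdot 4!}=\frac{25}{24}$, where the extra factor $\frac15$ comes from $\sum_{b\le k}b^4\sim k^5/5$.
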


\begin{proof}
We use the cusp-orbit representatives
\[
[0/1],\ [1/11],\ [2/11],\ [3/11],\ [4/11],\ [5/11].
\]
The orbit sizes are
\(
5,1,1,1,1,1.
\)
The valence formula is therefore, \begin{equation}\label{eq:N11-valence}
5v_{[0/1]}
+
v_{[1/11]}
+
v_{[2/11]}
+
v_{[3/11]}
+
v_{[4/11]}
+
v_{[5/11]}
=
5k
\end{equation}
Let
\[
\mathbf v
=
(v_{[0/1]},v_{[1/11]},v_{[2/11]},v_{[3/11]},v_{[4/11]})^T.
\]
By Lemma~\ref{prop:explicit-BN} and Lemma~\ref{lem:ANBN-equals-VN}, for the first five cusp representatives, 
\[
A_{11}
=
\frac1{132}
\begin{pmatrix}
11&11&11&11&11\\
61&13&-23&-47&-59\\
13&-47&-59&-23&61\\
-23&-59&13&61&-47\\
-47&-23&61&-59&13
\end{pmatrix}, \quad 
B_{11}
=
\begin{pmatrix}
3&-2&4&0&-5\\
2&-3&1&-2&-3\\
-2&1&-4&1&0\\
-3&2&-1&-2&-2\\
0&2&0&3&-2
\end{pmatrix}.
\]
A Smith normal form of \(V_{11} = A_{11}B_{11}\) is
\[
P_{11}V_{11}Q_{11}=\operatorname{diag}(1,1,1,1,5),
\]
where
\[
P_{11}=
\begin{pmatrix}
0&0&1&0&0\\
0&-1&0&-1&0\\
0&2&1&2&1\\
0&-13&-6&-12&-9\\
-1&-26&-12&-24&-18
\end{pmatrix}
, \quad 
Q_{11}=
\begin{pmatrix}
1&-1&-3&-28&70\\
0&1&1&-22&56\\
0&0&1&10&-25\\
0&0&0&15&-38\\
0&0&0&-2&5
\end{pmatrix}.
\]
The corresponding \(g\)-vector is
\[
\mathbf g=(1,11,11,11,11)^T.
\]
Hence
\[
\mathbf w(\mathbf v)
=
\mathbf v-\frac{k}{12}\mathbf g
=
\mathbf v-\frac{k}{12}\mathbf g.
\]
By Lemma~\ref{lem:psolution} and Theorem \ref{mt}, we require 
\[
P_{11}\left(\mathbf w(\mathbf v)-A_{11}\mathbf a^{(k)}\right)
\in
\operatorname{diag}(1,1,1,1,5)\mathbb Z^5,
\]
where
\[
\mathbf a^{(k)}=(-3k,3k,-k,0,0)^T.
\]
The nontrivial condition involves the fifth row of $P_{11}$: $\mathbf p_{5}=(-1,-26,-12,-24,-18)$, 
\[
\mathbf p_{5}\left(\mathbf w(\mathbf v)-A_{11}\mathbf a^{(k)}\right)
=
\mathbf p_{5}\mathbf v
+ 18k.
\]
Modulo \(5\), this becomes
\[
4v_{[0/1]}+4v_{[1/11]}+3v_{[2/11]}
+v_{[3/11]}+2v_{[4/11]}+3k
\equiv0\pmod5.
\]
Therefore, 
\[
L_{11}(k)
=
\left\{
A_{11}^{-1}\,\mathbf{w}(\mathbf{v})
\;\middle|\;
\begin{aligned}
&v_{[0/1]},\, v_{[1/11]},\, v_{[2/11]},\, v_{[3/11]},\, v_{[4/11]},\, v_{[5/11]}
\in \mathbb{Z}_{\ge 0}, \\[1pt]
&5v_{[0/1]} + v_{[1/11]} + v_{[2/11]} + v_{[3/11]} + v_{[4/11]} + v_{[5/11]}
= \frac{5a_0}{2}, \\[3pt]
&v_{[0/1]} + v_{[1/11]} + 2v_{[2/11]} + 4v_{[3/11]} + 3v_{[4/11]} + a_0
\equiv 0 \pmod{5}
\end{aligned}
\right\}.
\]
By re-writing \eqref{eq:N11-valence}, the lattice points in the polytope can be counted by
 \[
|L_{11}(k)|=\sum_{0\le t_0 \le \frac{a_{0}}{2}}\sum_{{t_{i}\ge0,\; 1 \le i \le 5 \atop \sum t_{i} =  \frac{5a_{0}}{2}-5t_0}}\frac{1}{5}\sum_{j=0}^{4}\omega_{5}^{j\left(t_0 + t_1 + 2t_2+ 4t_3 + 3t_4 + a_0\right)}.
\]
Interchange the sums to get
\begin{align} \label{11p}
|L_{11}(k)|=\frac{1}{5}\sum_{j=0}^{4}\omega_{5}^{ja_{0}}\sum_{0\le t_0\le\frac{a_{0}}{2}}\omega_{5}^{jt_0}\sum_{{t_{i}\ge0\atop \sum t_{i} =5\frac{a_{0}}{2}-5t_0}}\omega_{5}^{j\left(t_1 + 2t_2+ 4t_3 + 3t_4 \right)}.
\end{align}
Consider the generating function for  $W_{j}(n)$, the number of
tuples  \begin{align} \label{tup11}
(t_{1},t_{2},t_{3},t_{4},t_5) \quad \text{such that}\quad t_{1}+t_{2}+t_{3}+t_{4}+t_5=n, \qquad t_i \ge 0,
\end{align}
weighted by $\omega_{5}^{j\left(t_1 + 2t_2+ 4t_3 + 3t_4\right)}$. For $1 \le j \le 4$, this reduces to
\begin{align*}
 & \sum_{n=0}^{\infty}W_{j}(n)x^{n}=\sum_{n=0}^{\infty}\sum_{{t_{i}\ge0, 1 \le i \le 5 \atop \sum t_{i}=n}}\omega_{5}^{j\left(t_1 + 2t_2+ 4t_3 + 3t_4\right)}x^{n}
 = \frac{1}{1-x}\prod_{k=1}^{4} (1 - \omega_{5}^{kj}x)^{-1} 
 = \frac{1}{1-x^{5}}.
\end{align*}
It follows that, for $1 \le j \le 4$,
\[
W_j(n)=
\begin{cases}
1, & n\equiv 0\pmod 5,\\
0, & \text{otherwise}.
\end{cases}
\]
Note that the innermost sum of \eqref{11p} can be written in terms
of $W_j$ and evaluated via combinations with replacement through \eqref{tup11} when $j=0$:
\begin{align*}
\sum_{{t_{i}\ge0,\; 1 \le i \le 5 \atop \sum t_{i} =  \frac{5a_{0}}{2}-5t_0}}\omega_{5}^{j\left(t_1 + 2t_2+ 4t_3 + 3t_4 \right)} = W_j \left ( 5 \left (\frac{a_{0}}{2}-t_0 \right ) \right )= \begin{cases}
{5 \left (\frac{a_{0}}{2}-t_0 \right ) +4 \choose 4}, & j=0, \\ 1, & 1 \le j \le 4.
\end{cases}
\end{align*}
Therefore, separating the terms $j=0$ from $1 \le j \le 4$ and manipulating the result, we obtain
\begin{align*}
|L_{11}(k)| &=\frac{1}{5}\sum_{t_0=0}^{\frac{a_{0}}{2}} {5 \left (\frac{a_{0}}{2}-t_0 \right ) + 4 \choose 4 } + \frac{1}{5}\sum_{j=1}^{4}\omega_{5}^{ja_{0}}\sum_{t_0=0}^{\frac{a_{0}}{2}}\omega_{5}^{jt_0} \\ &= \frac{1}{5}\sum_{t_0=0}^{\frac{a_{0}}{2}} \binom{5t_0+4}{4} +  \frac{1}{5}\sum_{j=1}^{4}\omega_{5}^{ja_{0}/2}\frac{1-\omega_{5}^{3j(1+a_{0}/2)}}{1-\omega_{5}^{3j}}.
\end{align*}
Expanding the binomial coefficient and applying formulas for sums of powers gives the claimed main term. Since $\omega_{5}=e^{2\pi i/5}$, the final sum above depends only on the
residue class for $a_{0}/2$ modulo 5. Simplifying in each case, we
get the claimed quasipolynomial part $\mathcal{E}_{11}$.
\end{proof}

By Theorem \ref{genform}, a period of the quasipolynomial for $|L_{13}(k)|$ divides $114$. By simplifying the sums of roots of unity obtained from \eqref{thel} and applying Theorem \ref{recur}, proper divisors are ruled out.
\begin{theorem}[Level $N=13$] \label{lev:13}
For $k\geq0$, we have $|L_{13}(k)| = T_{13}(k) + \mathcal{E}_{13}(k)$, where 
\begin{align*}
T_{13}(k)
=
\frac{117649}{82080}k^6
+\frac{16807}{2280}k^5
+\frac{156065}{10944}k^4
+\frac{5831}{456}k^3
+\frac{71981}{13680}k^2
+\frac{959}{1140}k
+\frac{27749}{196992},
\end{align*}
and 
\[
\mathcal E_{13}(k)
=
\frac{10368\,C_{13}(k)+d_{k\bmod 6}}{196992},
\]
where
\[
(d_0,d_1,d_2,d_3,d_4,d_5)
=
(-17381,-18971,-22373,-24347,-22373,-18971).
\]
    and where $C_{13}(19q +r)$ is periodic modulo $114$ given by Table \ref{tab:E13-error}.
\begin{table}[ht] 
  \centering
  \caption{Periodic term values:
           the entry in row $q$ and column $r$ is $C_{13}(19q+r)$.} 
  \renewcommand{\arraystretch}{1}
  \resizebox{\textwidth}{!}{%
    \begin{tabular}{|c||ccccccccccccccccccc|} \hline  
      $q \backslash r$
        & 0 & 1 & 2 & 3 & 4 & 5 & 6 & 7 & 8 & 9
        & 10 & 11 & 12 & 13 & 14 & 15 & 16 & 17 & 18  \\
      \hline\hline
      0
        & 18 & 0  & 60  & 30  & 0
        & -54 & -54 & -60 & 60 & 48
        & 42 & -6 & 18  & -60 & -12
        & 0  & 0  & -24 & 18 \\ 
      1
        & 0  & 60 & 54  & 36 & -60
        & -36 & -72 & 0  & 54 & 54
        & -18 & 12 & 0  & 0  & -18
        & 6  & -60 & -6 & 0 \\  
      2
        & 60 & 54 & 60  & -24 & -42
        & -54 & -12 & -6 & 60 & -6
        & 0  & -6 & 60 & -6 & -12
        & -54 & -42 & -24 & 60 \\ 
      3
        & 54 & 60 & 0  & -6 & -60
        & 6  & -18 & 0  & 0  & 12
        & -18 & 54 & 54 & 0  & -72
        & -36 & -60 & 36 & 54 \\ 
      4
        & 60 & 0  & 18 & -24 & 0
        & 0  & -12 & -60 & 18 & -6
        & 42 & 48 & 60 & -60 & -54
        & -54 & 0  & 30 & 60 \\ 
      5
        & 0  & 18 & 0  & 36 & -6
        & 6  & -72 & -42 & 0  & 54
        & 36 & 54 & 0  & -42 & -72
        & 6  & -6 & 36 & 0 \\ \hline 
    \end{tabular}
  }\label{tab:E13-error}
\end{table}
\end{theorem}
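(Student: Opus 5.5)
We follow the same template as the proof of Theorem~\ref{level11}, with the extra complexity coming from the class group of $X_1(13)$. For $N=13$ we have $m=6$. The cusp-orbit representatives are $[0/1]$, with orbit size $\mu_0=6$, and $[j/13]$ for $1\le j\le 6$, each with orbit size $1$. Since $R_{13}=168/24=7$, the valence formula reads
\[
6v_{[0/1]}+v_{[1/13]}+\cdots+v_{[6/13]}=7k .
\]
First we write down $A_{13}$ from \eqref{beta} using the representatives $[0/1],[1/13],\dots,[5/13]$, and $B_{13}$ from Lemma~\ref{prop:explicit-BN} with $M=13$, $r_{13}=2$, $a_{13}=-1$, $b_{13}=-1$. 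Next we compute $V_{13}=A_{13}B_{13}$ and its Smith normal form. The known $\mathbb Q$-rational cuspidal class group of $X_1(13)$ is $\mathbb Z/19\mathbb Z$, so we expect $D_{13}=\operatorname{diag}(1,1,1,1,1,19)$. As for $N=11$, only the last row $\mathbf p_6$ of $P_{13}$ then gives a nontrivial condition. That condition is a single congruence
\[
\sum_{r=0}^{5}\gamma_r v_{\mathfrak c_r}+\alpha k\equiv 0 \pmod{19},
\]
where $\alpha$ comes from \eqref{smith3} using $\mathbf g=(1,13,\dots,13)^T$ and $\mathbf a^{(k)}=(-3k,3k,-k,0,0,0)^T$.

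Then we apply \eqref{thel} with the single character sum over $j\in\mathbb Z/19$. The term $j=0$ counts all solutions of the valence equation. It equals
\[
\frac1{19}\sum_{t_0=0}^{\lfloor 7k/6\rfloor}\binom{7k-6t_0+5}{5}.
\]
This is a quasipolynomial of period $6$ in $k$, because $t_0$ runs up to $\lfloor 7k/6\rfloor$ and the partial sums of a degree-5 polynomial along residues mod $6$ contribute residue-dependent constants. It should produce the polynomial $T_{13}(k)$ together with the $d_{k\bmod 6}$ part of $\mathcal E_{13}$. As a check, the leading coefficient should agree with Theorem~\ref{lt}: $7^6/(6!\cdot 6\cdot 19)=117649/82080$. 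For $j\ne 0$, the generating function \eqref{gen} is
\[
\frac{1}{(1-\omega_{19}^{j\gamma_0}x^6)\prod_{r=1}^{6}(1-\omega_{19}^{j\gamma_r}x)},
\]
with $\gamma_6=0$. The congruence residues $\gamma_1,\dots,\gamma_5$ together with $0$ are six distinct classes mod $19$. So, unlike the $N=11$ case, the product does not collapse to $1/(1-x^{19})$. Its poles lie at $19$th roots of unity and at $114$th roots of unity coming from the $x^6$ factor. This matches the period bound $114$ from Theorem~\ref{genform}(3).

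To obtain the explicit constituents, we would not simplify these root-of-unity sums by hand. Instead we use Theorem~\ref{int} with $E=114$. Theorem~\ref{lt} fixes the leading coefficient, and Theorem~\ref{genform} fixes the period. Each residue class $r$ mod $114$ then has a degree-$6$ constituent. That constituent is determined by $p_{r,0},\dots,p_{r,6}$, which are computed from the exact counts $|L_{13}(114s+r)|$ for $0\le s\le 6$. These counts come from the recursion of Theorem~\ref{recur}, whose table grows only linearly in $k$, so computing up to $k\approx 800$ is feasible. For a cleaner structural proof, we would show that the $j\neq0$ contribution is bounded in $k$. The only possibly multiple poles there come from coincidences $\rho^6=\omega^{j\gamma_0}$ with $\rho=\omega^{j\gamma_r}$. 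Since $6$ is invertible mod $19$, this happens only for one $r$, giving at most a double pole. So this part contributes a quasipolynomial of degree at most $1$; its degree-$1$ part is checked to cancel in the sum over $j$, leaving a bounded periodic function. Averaging then shows that $\mathcal E_{13}$ consists only of constants, split as $10368C_{13}(k)/196992$ (the $j\ne0$ contribution, of period $114$) plus $d_{k\bmod 6}/196992$ (from the $j=0$ floor effects).

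The main obstacle is keeping this computation both correct and verifiable. The Smith normal form data $\mathbf p_6$ and $\alpha$ must be exact, and the table of $C_{13}$ must be derived. Concretely, I would first verify the congruence by checking small cases, e.g. $|L_{13}(1)|$, against a direct enumeration of exponent vectors via Lemma~\ref{thef}. Then I would confirm minimality of the period $114$ by exhibiting residues where the constituents for $r$ and $r+57$, and for $r$ and $r+38$, differ, which rules out the proper divisors.
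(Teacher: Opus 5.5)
Your proposal is correct. Its setup agrees with the paper: orbit sizes $6,1,\dots,1$, $R_{13}=7$, $r_{13}=2$, $a_{13}=b_{13}=-1$, a single congruence mod $19$, the $j=0$ binomial sum, leading coefficient $7^6/(6!\cdot 6\cdot 19)$, and period bound $114$. The difference is in how the $114$ constituents are pinned down.

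\textbf{Your primary route.} You interpolate every residue class through Theorem~\ref{int}. That needs $|L_{13}(114s+r)|$ for $0\le s\le 6$, i.e.\ $798$ values with $k$ up to $797$. This is rigorous given Theorems~\ref{genform} and~\ref{lt}, and it would work even if $\mathcal E_{13}$ had non-constant constituents. The price is that the constancy of $\mathcal E_{13}$ only appears as a computational outcome.

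\textbf{The paper's route.} The paper takes what you call the cleaner structural route and proves boundedness first. It keeps the sum over $t_0=v_{[0/1]}$ outside the generating function. Then $\sum_n W_j(n)x^n=\prod_{v\in S}(1-\omega_{19}^{jv}x)^{-1}$, with $S=\{0,14,11,2,13,18\}$ read off from the congruence $v_{[0/1]}+14v_{[1/13]}+11v_{[2/13]}+2v_{[3/13]}+13v_{[4/13]}+18v_{[5/13]}+2k\equiv0\pmod{19}$. This product has only simple poles, and partial fractions turn the $t_0$-sum into geometric sums with ratios $\omega_{19}^{j(1-6v)}$. The result depends only on $k \bmod 114$, giving $|L_{13}(k)|=\frac1{19}\sum_{t_0}\binom{7k-6t_0+5}{5}+\frac1{19}C_{13}(k)$ with $C_{13}$ exactly $114$-periodic. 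So Theorem~\ref{recur} is needed only for $0\le k\le 113$. Since $10368\cdot 19=196992$, your identification of the two pieces of $\mathcal E_{13}$ is exactly this split.

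\textbf{The double pole does not occur.} In your single-generating-function formulation, the double pole you worry about would require $6\gamma_r\equiv\gamma_0=1$, i.e.\ $\gamma_r\equiv 6^{-1}\equiv 16\pmod{19}$. But $16\notin S$. Equivalently, no ratio $\omega_{19}^{j(1-6v)}$ equals $1$. So there are no double poles and no degree-one cancellation to check.

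\textbf{Minimality check.} The maximal proper divisors of $114=2\cdot 3\cdot 19$ are $57$, $38$ and $6$. Comparing $r$ with $r+57$ and $r+38$ does not exclude period $6$; you also need residues where the constituents for $r$ and $r+6$ differ. Minimality is not part of the theorem's statement, though.
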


\begin{proof}
For $N=13$, the valence formula takes the form
\begin{align}
6v_{[0/1]}+v_{[1/13]}+v_{[2/13]}+v_{[3/13]}+v_{[4/13]}+v_{[5/13]}+v_{[6/13]}=7k.
\label{eq:13-val}
\end{align}
The corresponding congruence simplifies to
\begin{align}
v_{[0/1]}+14v_{[1/13]}+11v_{[2/13]}+2v_{[3/13]}+13v_{[4/13]}+18v_{[5/13]}+2k\equiv 0\pmod{19}.
\label{eq:13-cong}
\end{align}
By proceeding as in the case of $N=11$ and expanding \eqref{thel} in terms of roots of unity, one obtains from the trivial index a binomial sum. In particular, 
\begin{align} \label{l13}
|L_{13}(k)|
= \frac{1}{19}\sum_{t_0=0}^{\lfloor 7k/6\rfloor}\binom{7k-6t_0+5}{5} +\frac{1}{19}\sum_{j=1}^{18}\omega_{19}^{2jk}
\sum_{t_0=0}^{\lfloor 7k/6\rfloor}\omega_{19}^{jt_0}W_j(7k-6t_0),
\end{align}
where
\begin{align*}
W_j(n):=
\sum_{\substack{t_1,\dots,t_6\ge 0\\ t_1+\cdots+t_6=n}}
\omega_{19}^{j(14t_1+11t_2+2t_3+13t_4+18t_5)}, \qquad 1\le j \le 18. 
\end{align*}
The generating function is a product of geometric series with a partial fraction,
\begin{align} \label{w13}
\sum_{n\ge 0}W_j(n)x^n
 = \prod_{v \in S}\frac{1}{1-\omega_{19}^{vj}x}
 =
 \sum_{v\in S}\frac{A_{j,v}}{1-\omega_{19}^{jv}x}, \quad A_{j,v}\in \mathbb C, \ S:=\{0,14,11,2,13,18\}.
\end{align}
Expanding both sides of \eqref{w13} and equating coefficients, the summands of \eqref{l13} can be written
\begin{align}
\omega_{19}^{2jk}\sum_{t_0=0}^{\lfloor 7k/6\rfloor}\omega_{19}^{jt_0}W_j(7k-6t_0)  
 \nonumber 
 &=
\sum_{v\in S}
A_{j,v}\,
\omega_{19}^{j(2+7v)k}
\sum_{t_0=0}^{\lfloor 7k/6\rfloor}\omega_{19}^{j(1-6v)t_0}\\ 
&=\sum_{v\in S}
A_{j,v}\,
\omega_{19}^{j(2+7v)k}
\frac{1-\omega_{19}^{j(1-6v)(\lfloor 7k/6\rfloor+1)}}{1-\omega_{19}^{j(1-6v)}}.
\label{eq:13-jpiece}
\end{align}
Now write
\(
k=114m+r,\ 
0\le r\le 113.
\)
Since \(114\equiv 0\pmod{19}\),
\(
\omega_{19}^{j(2+7v)k}=\omega_{19}^{j(2+7v)r},
\)
and note
\[
\left\lfloor \frac{7k}{6}\right\rfloor
=
\left\lfloor \frac{7(114m+r)}{6}\right\rfloor
=
133m+\left\lfloor \frac{7r}{6}\right\rfloor,
\]
Hence
\(
\omega_{19}^{j(1-6v)(\lfloor 7k/6\rfloor+1)}
=
\omega_{19}^{j(1-6v)(\lfloor 7r/6\rfloor+1)},
\)
so terms in \eqref{eq:13-jpiece} depend only on \(r\equiv k\bmod 114\). Therefore, by \eqref{l13}, for some sequence $C_{13}(k)$ periodic modulo $114$,
\begin{align} \label{13f}
    |L_{13}(k)| = \frac{1}{19}\sum_{t_0=0}^{\lfloor 7k/6\rfloor}\binom{7k-6t_0+5}{5} +\frac{1}{19}C_{13}(k).
\end{align}
The constants $C_{13}(k)$ can be computed by evaluating $|L_{13}(k)|$, $0 \le k \le 113$ through Theorem \ref{recur}. Finally, $T_{13}(k)$ and $\mathcal{E}_{13}(k)$ may be computed from \eqref{13f} and by applying \eqref{deft} and \eqref{defe}.
\end{proof}
 
For $N=11,13$, the nontrivial indices in \eqref{thel} contribute a periodic remaining term, so that $\mathcal{E}_{N}$ is a periodic sequence. In general, the nontrivial indices in \eqref{thel} correspond to $\mathcal{E}_{N}(k)$ of degree greater than $0$. Knowledge of the degree of the quasipolynomial for $|L_{N}(k)|$ from Theorem \ref{mt} allows us to compute the generating functions for each quasipolynomial constituent through Theorem \ref{recur} and then apply polynomial interpolation. This is illustrated in the proof of Theorem \ref{level14}.

\begin{theorem}[Levels $N=14, 15$]
\label{level14}
Let $k\geq0$, and
\begin{align*}
T_{14}(k)
&=
\frac{(k+1)\left(324k^6+1944k^5+4482k^4+4968k^3+2788k^2+824k+105\right)}{315},
\\ T_{15}(k)
&=
\frac{(k+1)(2k+1)(2k+3)
\left(512k^4+2048k^3+2584k^2+1072k+105\right)}
{1260}.
\end{align*}
Then 
\[
|L_{N}(k)|
=
T_{N}(k)
+
\mathcal E_{N}(k), \qquad N=14, 15,
\]
where
\[
\mathcal E_{14}(k)=
\begin{cases}
\dfrac{2(k+3)}{9},
& k\equiv0,3\pmod6,\\[2mm]
\dfrac{2(k-1)}{9},
& k\equiv1,4\pmod6,\\[2mm]
-\dfrac{4(k+1)}{9},
& k\equiv2,5\pmod6.
\end{cases}
\qquad 
 \mathcal E_{15}(k)=
\begin{cases}
\dfrac{k+3}{4},
& k\equiv0\pmod4,\\[3mm]
-\dfrac{k+1}{4},
& k\equiv1,3\pmod4,\\[3mm]
\dfrac{k-1}{4},
& k\equiv2\pmod4.
\end{cases}
\]
\end{theorem}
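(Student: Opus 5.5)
We follow the scheme described just before the statement: determine the quasipolynomial structure from the general theory, then fix it by finitely many exact counts from Theorem~\ref{recur}.

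\textbf{Step 1: set up the data for each level.} For $N=14$ and $N=15$ we have $m=7$. We list the eight Galois orbit representatives via \eqref{cusp}. For $N=14$ the pairs $(g,a)$ are
\[
(1,1),(2,1),(7,1),(7,2),(7,3),(14,1),(14,3),(14,5),
\]
and for $N=15$ they are
\[
(1,1),(3,1),(5,1),(5,2),(15,1),(15,2),(15,4),(15,7).
\]
The orbit sizes $\mu_r$ are then read off from \eqref{thew-class}. We compute
\[
R_{14}=\tfrac{1}{24}\sum_{d\mid 14}d\varphi(d)\varphi(14/d)=6,\qquad R_{15}=8.
\]
Next we form $A_N$ from \eqref{beta} and $B_N$ from Lemma~\ref{prop:explicit-BN}, multiply to get $V_N=A_NB_N$ (Lemma~\ref{lem:ANBN-equals-VN}), and compute a Smith normal form $P_NV_NQ_N=D_N$. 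We expect $\Cl^c_{\mathbb Q}(X_1(14))\cong\mathbb Z/6$ and $\Cl^c_{\mathbb Q}(X_1(15))\cong\mathbb Z/4$, which is consistent with the stated periods. As in the level $11$ and $13$ cases, the congruences \eqref{cong} should then collapse to a single congruence modulo $d_m$ of the form $\mathbf p_m\cdot\mathbf v+\alpha k\equiv0$. The vector $\boldsymbol\alpha_N$ comes from \eqref{smith3} with $\mathbf a^{(1)}$ taken from Lemma~\ref{lem:psolution}.

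\textbf{Step 2: determine the period.} Theorem~\ref{genform}(3) gives an explicit multiple of the period. The $\mu_r$ lie in $\{1,2,3,6\}$ for $N=14$ and in $\{1,2,4,8\}$ for $N=15$, so the bound should reduce to $6$ and $4$ respectively. If the bound is larger, we shrink it as in the level $13$ proof. That means writing \eqref{thel} with the explicit roots of unity and checking that the floors and roots of unity depend only on $k$ modulo $6$ (resp.\ $4$). Theorem~\ref{lt} fixes the common leading coefficient
\[
R_N^7\big/\bigl(7!\,\mu_0\cdots\mu_7\,|\det V_N|\bigr),
\]
and we check it against the leading coefficients of $T_{14}$ and $T_{15}$ as a consistency test on $V_N$.

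\textbf{Step 3: interpolate.} By Theorem~\ref{int}, on each residue class $k=Et+r$ the count $|L_N(k)|$ is a polynomial of degree $7$ in $t$. It is therefore determined by the eight values $|L_N(Et+r)|$ for $0\le t\le7$. We compute these values with the recursion $D_{m+1}(kR_N,-k\boldsymbol\beta)$ of Theorem~\ref{recur}. This needs $k$ up to $47$ for $N=14$ and up to $31$ for $N=15$; the recursion table grows only linearly in $k$, so this is feasible. We then obtain the $p_{r,\ell}$ from \eqref{tr} and form the constituents.

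\textbf{Step 4: average and compare.} We average the coefficients over the residue classes as in \eqref{deft} to get $T_N$, and subtract to get $\mathcal E_N$ as in \eqref{defe}. Finally we verify that these agree with the stated closed forms. In particular $\mathcal E_N$ should turn out to be linear in $k$; this is plausible because the nontrivial characters in \eqref{thel} produce only low-order poles in \eqref{gen}.

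\textbf{Main obstacle.} Steps 1--2 are the crux: we must get the correct cusp representatives, the correct entries of $A_N$ (the fractional parts $\langle ia/g\rangle$), and the correct reduced congruence. An error there corrupts every computed count. To guard against this, we cross-check small cases, such as $|L_N(1)|$, against a direct enumeration of $q$-products that are holomorphic at all cusps via Lemma~\ref{thef}.
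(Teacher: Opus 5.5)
Your proposal is correct and follows essentially the same route as the paper. The paper also computes the eight values $|L_{14}(6t+r)|$, $0\le t\le 7$, for each residue $r$ (via Theorem~\ref{recur} or coefficient extraction), interpolates the constituents with Theorem~\ref{int}, and averages via \eqref{deft}--\eqref{defe}. Your explicit derivation of the periods $6$ and $4$ from Theorem~\ref{genform} and your leading-coefficient check against Theorem~\ref{lt} (which gives $36/35$ and $512/315$) are sound additions that the paper leaves implicit.
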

\begin{proof}
We prove the claims for $N=14$, since $N=15$ is analogous. From the class group conditions, 
\begin{equation}\label{eq:N14-corrected-cong}
v_0+4v_1+5v_2+v_3+3v_4+4v_5+2v_6
\equiv4k\pmod6.
\end{equation}
Now write
\(
k=6t+r,
\) $0\le r\le5.$
The valence condition takes the form
\[
3v_0+3v_1+v_2+v_3+v_4+v_5+v_6+v_7=6k=36t+6r.
\]
Then, with $b_{r,t}=|L_{14}(6t+r)|,$
we obtain the first eight values for each residue class modulo $6$:
\[
\resizebox{\textwidth}{!}{$
\begin{array}{c|rrrrrrrr}
r & b_{r,0}&b_{r,1}&b_{r,2}&b_{r,3}&b_{r,4}&b_{r,5}&b_{r,6}&b_{r,7}\\
\hline
0&1&827119&64097149&916443883&6266192761&28264352359&97561035253&279408496483\\
1&98&2118068&107781638&1312736408&8247043754&35301200252&117589551182&328202186336\\
2&1973&4849215&174831913&1847653043&10742013981&43789172647&141043660433&384124346139\\
3&15664&10166338&274849300&2559447526&13857801976&53969519242&168398302684&448022495278\\
4&76685&19851515&420361865&3494390135&17718131717&66114562931&200179697153&520822917359\\
5&278734&36558044&627448170&4707966904&22465713926&80530609812&236969387938&603536027504
\end{array}
$}
\]
These values can be computed from Theorem \ref{recur} or by extracting coefficients $b_{r,t} = [x^{36t+6r}]\Phi_r(x)$ from the corresponding generating function, defined
for \(0\le r\le5\) by
\begin{align*}
\Phi_r(x)
=
\frac16\sum_{j=0}^{5}
\frac{\omega_6^{-4rj}}
{(1-\omega_6^{j}x^3)
(1-\omega_6^{4j}x^3)
(1-\omega_6^{5j}x)
(1-\omega_6^{j}x)
(1-\omega_6^{3j}x)
(1-\omega_6^{4j}x)
(1-\omega_6^{2j}x)
(1-x)}.
\end{align*}
By Theorem \ref{int}, the polynomial constituents are given, for $k \equiv r \pmod{6}$, by $|L_{N}(k)| = Q_{r}(k)$,
where $$Q_{r}(k)=\sum_{\ell=0}^{7}
p_{r,\ell}
\binom{\frac{k-r}{6}+7-\ell}{7}, \qquad p_{r,s} = \sum_{a=0}^{s}
(-1)^a\binom{8}{a}b_{r,s-a}.$$ Expressions for $T_{14}(k)$ and $\mathcal{E}_{14}(k)$ may be obtained from these by applying \eqref{deft} and \eqref{defe}.
\end{proof}

When the class group is not cyclic, the situation is more subtle, and a multivariate generating function results. This is illustrated in the following example for $N=16$ whose class group has invariant factors $2, 10$. In this case the polytope enumerating function can be derived by first forming the rational generating function.

\begin{theorem}[Level \(N=16\)] \label{l16}
For $k \ge0$, 
\(
|L_{16}(k)|
= T_{16}(k) + \mathcal{E}_{16}(k),
\)
where
\[T_{16}(k)=\frac{2048k^8}{1575}
+\frac{2048k^7}{225}
+\frac{1984k^6}{75}
+\frac{9296k^5}{225}
+\frac{958k^4}{25}
+\frac{5138k^3}{225}
+\frac{33851k^2}{3150}
+\frac{137k}{30}
+\frac{109}{125}\]
and
\[
\mathcal E_{16}(k)=
\begin{cases}
-\dfrac{64}{125}, & k\equiv1\pmod5,\\[2mm]
\dfrac{16}{125}, & k\equiv0,2,3,4\pmod5.
\end{cases}
\]
\end{theorem}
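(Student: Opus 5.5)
We follow the template of Theorems \ref{level11} and \ref{level14}, but the class group is now non-cyclic. For $N=16$ we have $m=8$, and the nine Galois orbits are indexed by pairs $(g,a)$. Their sizes are
\[
\mu=(\varphi(16)/2,\ \varphi(8)/2,\ \varphi(4),\ \varphi(2),\ 1,1,1,1,1)=(4,2,2,1,1,1,1,1,1),
\]
one orbit for each of $g=1,2,4,8$ and four orbits for $g=16$, namely $a=1,3,5,7$. We also have $R_{16}=\frac1{24}\sum_{d\mid 16}d\varphi(d)\varphi(16/d)=8$. The valence equation is therefore
\[
4v_0+2v_1+2v_2+v_3+\cdots+v_8=8k.
\]

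\textbf{Step 1: the congruences.} Build $A_{16}$ from \eqref{beta} using the eight non-infinite representatives, and $B_{16}$ from Lemma~\ref{prop:explicit-BN}. Form $V_{16}=A_{16}B_{16}$ and compute its Smith normal form; this should give invariant factors $(1,\dots,1,2,10)$ and $|\det V_{16}|=20$. Then extract the two nontrivial rows of $P_{16}$ together with $\boldsymbol\alpha_{16}$ from \eqref{smith3}. The result is one congruence modulo $2$ and one modulo $10$, each of the form $\sum\gamma_i v_i+\alpha k\equiv0$.

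As a sanity check, Theorem~\ref{lt} predicts the leading coefficient
\[
\frac{8^8}{8!\cdot 4\cdot2\cdot2\cdot 20}=\frac{2048}{1575},
\]
which matches $T_{16}$. This confirms the orbit sizes and $|\det V_{16}|=20$.

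\textbf{Step 2: the character sum.} Apply \eqref{thel} to write
\[
|L_{16}(k)|=\tfrac1{20}\sum_{(j_1,j_2)\in\mathbb Z/2\times\mathbb Z/10}\xi(\mathbf j)^k\,[x^{8k}]\prod_{r=0}^{8}\bigl(1-\chi_r(\mathbf j)x^{\mu_r}\bigr)^{-1}.
\]
The trivial character contributes $\frac1{20}$ times the number of nonnegative solutions of the valence equation. This is an explicit binomial sum of degree $8$ in $k$, which we can evaluate with sums of powers.

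For each nontrivial $\mathbf j$, we expect the characters attached to the five weight-one orbits, together with the pairs attached to weight-two orbits, to multiply to factors of the shape $1-x^{10}$ or $1-x^{2}$, as happened in the $N=11$ computation. This would collapse most poles, so that only low-order poles at roots of unity survive. The stated $\mathcal E_{16}$ is constant and has period $5$. This suggests that every nontrivial character produces a purely periodic contribution: only simple poles at fifth roots of unity should remain, and any contributions with second-order pole structure should cancel in the sum over $\mathbf j$.

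\textbf{Step 3: assembling the answer.} To avoid error-prone partial fractions, we combine the structural information with interpolation. Theorem~\ref{genform} bounds the period by a divisor of $\operatorname{lcm}(10,\ 4\cdot10/\gcd(40,8),\ 20/\gcd(20,8),\ 10/\gcd(10,8))=10$. So $|L_{16}(k)|$ is a quasipolynomial of degree $8$ with period dividing $10$.

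Compute $|L_{16}(k)|$ for $k\le 10\cdot 9$ using the recursion of Theorem~\ref{recur}, which is cheap because the table grows linearly in $k$. Then obtain the ten constituents by Theorem~\ref{int}. Finally check that all constituents agree except in the constant term, and that the constant terms depend only on $k\bmod5$, taking the value $\frac{109}{125}-\frac{64}{125}$ when $k\equiv1$ and $\frac{109}{125}+\frac{16}{125}$ otherwise. Averaging the constituents via \eqref{deft}--\eqref{defe} then yields $T_{16}$ and $\mathcal E_{16}$.

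\textbf{Main obstacle.} The delicate part is Step 1: getting the correct cusp representatives, the values $\beta_{\mathfrak c,i}$ (with their $g/2$ scaling), and the $k$-dependent shift $\boldsymbol\alpha_{16}$. An error there changes which residue class is singled out, and in particular where the $-\frac{64}{125}$ sits. The leading-coefficient check and the value $|L_{16}(0)|=1$ serve as guards against such errors.
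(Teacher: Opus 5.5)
Your proposal is correct, and its logic holds together. Theorem~\ref{genform}(3), with $E_{16}=\operatorname{lcm}(2,10)=10$ and $R_{16}=8$, gives a period dividing $10$. Each $G_{\mathbf j}(x)$ is a proper rational function whose poles are at roots of unity, so the quasipolynomial agrees with $|L_{16}(k)|$ for every $k\ge0$. That is what lets Theorem~\ref{int} interpolate starting from $t=0$. With $m=8$ you need exactly the values $|L_{16}(k)|$ for $0\le k\le 89$, and the recursion of Theorem~\ref{recur} produces them from a table of roughly $10^5$ entries.

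Your route differs from the paper's in how the answer is assembled. The paper does not interpolate for $N=16$. Instead it sums the twenty contributions $H_{u,j}(q)$, each obtained from $G_{u,j}(x)$ by keeping the exponents divisible by $8$ and twisting by $\omega_{10}^{4jk}$. A computer algebra system then gives the closed form $F_{16}(q)=P(q)/\bigl((1-q)^9(1+q+q^2+q^3+q^4)\bigr)$, and $T_{16}$ and $\mathcal E_{16}$ are read off by partial fractions. The advantage is structural: the only poles other than $q=1$ are simple poles at primitive fifth roots of unity. So period $5$, and the fact that $\mathcal E_{16}$ is constant on residue classes, follow immediately.

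Your interpolation route is the one the paper uses for $N=14,15$. It needs only integer arithmetic and no symbolic manipulation of rational functions over $\mathbb Q(\omega_{10})$. In exchange, it depends on the a priori bound $10$, and it reveals period $5$ and the constancy of $\mathcal E_{16}$ only afterwards, by comparing the ten constituents.

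There are a few minor slips.
\begin{itemize}
\item $g=8$ contributes two Galois orbits, for $a=1,3$. So there are six weight-one orbits, not five, and not ``one orbit for each of $g=1,2,4,8$.'' Your vector $\mu=(4,2,2,1,1,1,1,1,1)$ is nevertheless right.
\item The leading-coefficient check cannot ``confirm'' $|\det V_{16}|=20$, because it uses the statement being proved.
\item That check also cannot distinguish $\mathbb Z/20$ from $\mathbb Z/2\times\mathbb Z/10$. This matters for your method: if $E_{16}=20$, the period bound would be $20$ and you would need values up to $k=179$. The invariant factors $2,10$ must come from the Smith normal form itself.
\end{itemize}
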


\begin{proof}
For $N=16$, we have the conditions
\begin{equation}\label{eq:N16-valence-corrected}
4t_0+2t_1+2t_2+t_3+t_4+t_5+t_6+t_7+t_8=8k.
\end{equation}
and the two congruences
\begin{equation}\label{eq:N16-cong1-corrected}
t_1+t_2+t_3+t_4\equiv 0\pmod 2,
\end{equation}
\begin{equation}\label{eq:N16-cong2-corrected}
8t_0+9t_1+4t_2+2t_3+7t_4+3t_5+t_6+4t_7+4k
\equiv0\pmod {10}.
\end{equation}
Then by \eqref{thel}
\begin{align}
|L_{16}(k)|
=
\frac1{20}\sum_{u=0}^{1}\sum_{j=0}^{9}
\omega_{10}^{4jk}W_{u,j}(8k),
\label{eq:N16-filtered-corrected}
\end{align}
where
\[
W_{u,j}(n)
=
\sum_{\substack{t_0,\dots,t_8\ge 0\\
4t_0+2t_1+2t_2+t_3+t_4+t_5+t_6+t_7+t_8=n}}
(-1)^{u(t_1+t_2+t_3+t_4)}
\omega_{10}^{j(8t_0+9t_1+4t_2+2t_3+7t_4+3t_5+t_6+4t_7)}.
\label{eq:N16-Wuj-corrected}
\]
Apply \eqref{eq:N16-filtered-corrected} to get 
\[
F_{16}(q):=\sum_{k\ge0}|L_{16}(k)|q^k = \frac1{20}\sum_{u=0}^{1}\sum_{j=0}^{9}H_{u,j}(q).
\]
where, for each pair \((u,j)\), we define
\[
H_{u,j}(q)=\sum_{k\ge0}\omega_{10}^{4jk}W_{u,j}(8k)q^k.
\]
The functions \(H_{u,j}(q)\) may be obtained by extracting terms whose degree in \(x\) is divisible by \(8\) from 
\begin{align*}
G_{u,j}(x) &=\sum_{n\ge0}W_{u,j}(n)x^n =
\frac{1}{
(1-\omega_{10}^{8j}x^4)
(1-(-1)^u\omega_{10}^{9j}x^2)
(1-(-1)^u\omega_{10}^{4j}x^2)} \\  &
\hspace{1cm}\times
\frac{1}{
(1-(-1)^u\omega_{10}^{2j}x)
(1-(-1)^u\omega_{10}^{7j}x)
(1-\omega_{10}^{3j}x)
(1-\omega_{10}^{j}x)
(1-\omega_{10}^{4j}x)
(1-x)
}.
\end{align*}

A computer algebra system can be used to simplify the rational functions resulting from each of the $20$ indices $(u,j)$ to obtain
\begin{align} \label{eq:N16-claimedGF}
F_{16}(q)
=
\frac{
P(q)
}
{{(1-q)^9(1+q+q^2+q^3+q^4)}},
\end{align}
where the second factor of the denominator is the cyclotomic polynomial of index $5$, and
\[
\begin{aligned}
P(q) = &4q^{11} + 409q^{10} + 7219q^9 + 29434q^8 + 48266q^7 
 + 52313q^6 + 51974q^5 \\ &\quad+ 45245q^4 + 22982q^3  + 4150q^2 + 147q + 1.
\end{aligned}
\]
The claimed expressions for $T_{16}(k)$ and $\mathcal{E}_{16}(k)$ may be derived from this expansion. 
\end{proof}

\end{document}